\title{\textsc{Pretorsion theories in prenormal categories}}
\author{Sandra Mantovani\textsuperscript{*}
\\
\texttt{sandra.mantovani@unimi.it}
\and 
Mariano Messora\textsuperscript{*}
\\
\texttt{mariano.messora@unimi.it}
\\
}
\date{
\footnotesize{\textsuperscript{*}Department of Mathematics, University of Milan, Via Cesare Saldini 50, 20133 Milan, Italy}%
\vspace{-2em}
}%
\begin{document}%
\maketitle%

\hrule\vspace{.7em}%
\thispagestyle{empty}%
\noindent\textbf{Abstract.} In this paper we extend several classical results on pointed torsion theories --  also known as torsion pairs -- to the setting of non-pointed torsion theories defined via kernels and cokernels relative to a fixed class of trivial objects (often referred to as \emph{pretorsion theories}). Our results are developed in the recently introduced framework of (non-pointed) \emph{prenormal categories} and other related contexts. Within these settings, we recover some characterisations of torsion and torsion-free subcategories, as well as the classical correspondences between torsion theories and closure operators. We also suitably extend a correspondence between torsion theories and (stable) factorisation systems on the ambient category, known in the homological case. Some of these results are then further specialised to an appropriate notion of hereditary torsion theory. Finally, we apply the developed theory to construct new examples of pretorsion theories.
\vspace{.7em}
\hrule
\vspace{.7em}
\noindent \emph{Keywords:}  torsion theory; pretorsion theory; prenormal category; trivial objects;  factorization system; closure operator. 
\emph{2000 MSC:} 18E40; 18A40; 18A32.
\vspace{.7em}
\hrule
\renewcommand{\baselinestretch}{0.8}\normalsize
\tableofcontents
\renewcommand{\baselinestretch}{1}\normalsize
\vspace{.7em}
\bigskip
\hrule
\vspace{.5em}
\noindent \href{https://doi.org/10.1016/j.jpaa.2026.108239}{Published in \emph{J.\ Pure Appl.\ Algebra} 230.4, 108239 (2026), pp.~1--26}
\restoregeometry
\section*{Introduction}
\addcontentsline{toc}{section}{Introduction}
The classical notion of torsion theory (\cite{DICKSON}), originally introduced for abelian categories, has since been studied by many authors in a variety of non-abelian contexts (see, for instance, \cite{BOURN,THOLEN,JANELIDZE07,EVERAERT}). More recently, it was shown that the notion can be  extended even beyond the setting of \emph{pointed} categories (that is, categories with a zero object) by replacing the zero object with a fixed, chosen class $\tobj$ of \emph{trivial objects}, and by working with kernels and cokernels defined relative to this class. This broader framework, along with several generalisations of it, has been studied in various works in the literature, including \cite{MARKI13,GRANDIS20,PRETORSION,HTT}, with a range of different perspectives, assumptions and terminology. In particular, the notion of \emph{pretorsion theory} introduced in \cite{PRETORSION} fits within this general framework, but with the class $\tobj$ determined only afterwards as the intersection of the torsion and torsion-free classes. In this paper, by contrast, we adopt the term \emph{\ztt{}} in order to emphasise the role of a class $\tobj$ of trivial objects fixed in advance. Regardless of the terminology, $\tobj$-torsion theories provide a far-reaching generalisation of the original notion of torsion theory, with a wide variety of examples found throughout the literature.

Despite the recent surge of interest in this notion, its general theoretical background remains at a rather incomplete stage of development. In the literature on pointed torsion theories, a number of now classical results have been established within various categorical settings. 
Such results include, for example, characterisations of torsion and torsion-free subcategories in terms of special properties of (co)reflections, as well as results describing the close connections between torsion theories and closure operators, or between torsion theories and factorisation systems (\cite{BOURN,THOLEN,JANELIDZE07,ROSICKY,EVERAERT}).
By contrast, analogous results for \ztts{} are still comparatively underdeveloped. 

The aim of this paper is to identify a meaningful categorical context where some of these classical results on torsion theories can be suitably extended to the setting of \ztts{}.

Such task is not straightforward. A key difficulty is that the more powerful of these results rely on  strong structural assumptions on the ambient category, such as regularity, protomodularity, or other related conditions. 
These are known to endow pointed kernels and cokernels with valuable categorical properties that are systematically exploited in the proofs of the aforementioned classical results: for instance, in a \emph{homological} category --  i.e.\ a pointed regular protomodular category --  normal and regular epimorphisms coincide and are pullback-stable (in other words, homological categories are \emph{normal} in the sense of \cite{NORMAL}). 
By contrast, these same assumptions on the ambient category fail to interact with the `relative' kernels and cokernels in any meaningful way: `$\tobj$-normal epimorphisms' (i.e. morphisms underlying cokernels relative to $\tobj$) are in general not regular epimorphisms, and protomodularity certainly does not force regular epimorphisms to become $\tobj$-normal. As a result, extending the stronger classical results on torsion theories to the relative setting proves to be a rather delicate matter. 



Nevertheless, some progress toward this goal has been made. For instance, the aforementioned work \cite{PRETORSION} extends to \ztts{} the classical characterisation of torsion-free subcategories as epi-reflective subcategories whose associated radical is idempotent. More recently, \cite{ANDREAS} establishes a generalisation of results from \cite{EVERAERT} to the context of \ztts{}, though their approach requires imposing rather restrictive conditions on the subcategory of trivial objects (to the point that a subcategory satisfying such conditions, if it exists, is unique): while not required to be reflective, it must be posetal, mono-coreflective, and such that the coreflector inverts monomorphisms. In addition, the ambient category in \cite{ANDREAS} is still assumed to be both regular and protomodular.

The perspective of the present paper is quite different. Rather than assuming regularity or protomodularity, we look for a general categorical framework that independently equips $\tobj$-normal epimorphisms with some of the essential features their pointed counterparts enjoy in the homological context. 
Such a framework should be 
robust enough to recover, at least in part, the classical results on torsion theories, (co)reflections, factorisation systems, and closure operators,
 while remaining
sufficiently general to encompass many of the example of \ztts{} found in the literature, and possibly produce new ones.

We find such a framework in the setting of \emph{prenormal categories} and certain weakenings of this notion. Prenormal categories were introduced recently in \cite{PN}; their definition is modelled on that of regular categories, but with the roles of coequalisers and kernel pairs replaced by cokernels and kernels -- possibly relative to a fixed class $\tobj$, in which case one speaks of $\tobj$-prenormal categories. Although \normal{} categories form a broad generalisation of homological and normal categories (even in the pointed setting), they still retain many fundamental properties familiar from algebra. Notably, they admit a stable factorisation system based on (relative) normal epimorphisms, as well as a categorical analogue of Noether’s third isomorphism theorem. These properties, in turn, make it possible to partially recover and extend to \ztts{} some of the classical results discussed earlier, often by retaining the same overall structure of the proofs -- though additional technicalities inevitably arise. This is carried out in \zcref{sec-T-and-TF,sec-tt-and-CO,sec-TT-and-FS} of the present paper, following a general review of relative (co)kernels, pretorsion theories and prenormal categories in \zcref{sec-preliminaries,sec-context}. In particular, we will focus on how the following key results on torsion theories extend to the $\tobj$-setting: characterisations of torsion and torsion-free subcategories, the classical correspondence between torsion theories and closure operators, and the correspondences between torsion theories and factorisation systems. \zcref{sec-hereditary} is devoted to specialising some of the preceding results to (the relevant notion
of) hereditary torsion theories. Finally, in \zcref{sec-examples} we look at examples of \ztts{} in \normal{} categories, some taken from the literature and others newly constructed here.
\section{Preliminaries}
\label{sec-preliminaries}
In this section, we review the background on kernels and cokernels defined relative to a distinguished class of trivial objects, as well as the resulting notion of torsion theory derived from them. 

We consider a category $\cat{C}$ and a class $\tobj$ of objects, identified with the full subcategory they span. These will remain fixed for the remainder of the paper.

\subsection*{Categories with a distinguished class of trivial objects}
For the theoretical background reviewed in this subsection, we refer the reader, for example, to \cite{GRANDIS13,GRANDIS92,MARKI13,PN}. 
\begin{definition}
\label{Z-def}
     A morphism in $\cat C$ is said to be \emph{\nptrivial{}} if it factors through an object in $\cat Z$.  
     
     Let $f\colon A\to B$ be a map in $\cat C$. A \emph{\npkernel{}} of $f$ is given by an object $K$ and a map $k\colon K\to A$ such that $\comp kf$ is \nptrivial{} and for any other map $x\colon X\to A$ such that $\comp xf $ is \nptrivial{}, there exists a unique $x'\colon X\to K$ such that $\comp{x'}k=x$.  We will sometimes refer to the kernel $(K, k)$ simply as $K$ or $k$, when the distinction is clear from context.
     Of course, \emph{\npcokernel{}s} are defined dually.
     
     A \emph{\npexact{} sequence} is given by a pair of composable maps 
\(
\begin{tikzcd}[cramped, sep=1.5em]
A\arrow[r, "f"]&B\arrow[r,"g"]&C
\end{tikzcd}
\)
    such that $(A,f)$ is the \npkernel{} of $g$ and $(C,g)$ is the \npcokernel{} of $f$.
\end{definition}
\npkernel{}s share many familiar properties with ordinary kernels, as highlighted in the following proposition.
\begin{proposition}
\label{pb-nmono}
    Let $(A,f)$ be a \npkernel{} of a map $g\colon B\to C$. Then the following hold.
    \begin{enumerate}
        \item $f$ is a monomorphism;
        \item\label{normmono-is-tker} if $h$ is a map in $\cat C$ such that $\comp hf$ is \nptrivial{}, then $h$ itself is \nptrivial{}.
        \item if $f$ admits a \npcokernel{} $(Q,q)$, then $(A,f)$ is the \npkernel{} of $q$;
        \item \label{specific-pb-mono}if $b\colon B'\to B$ is a map in $\cat C$ such that the pullback $f'\colon A'\to B'$ of $f$ along $b$ exists, then $(A',f')$ is the \npkernel{} of the composite $\comp bg$.
        \[
        \begin{tikzcd}
            A'\rar["f'"]\dar& B'\dar["b"]
            \\
            A\rar["f"']&B\rar["g"']&C
        \end{tikzcd}
        \]
    \end{enumerate}
\end{proposition}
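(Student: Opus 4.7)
The plan is to establish the four items in order, in each case mimicking the pointed-case argument with ``composite equals zero'' replaced by ``composite is \nptrivial{}''. Part (1) comes first and then serves as a cancellation device throughout. Given $h_1,h_2\colon X\to A$ with $f\circ h_1=f\circ h_2$, the composite $(g\circ f)\circ h_i$ is \nptrivial{} since $g\circ f$ is; the universal property of the \npkernel{} $(A,f)$ applied to $f\circ h_1\colon X\to B$ supplies a unique $u\colon X\to A$ with $f\circ u=f\circ h_1$, so $h_1=h_2=u$.

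For (2), factor the \nptrivial{} composite as $f\circ h=\tau\circ\sigma$ with $\sigma\colon X\to T$, $\tau\colon T\to B$ and $T\in\tobj$. The key observation is that $\tau$ itself is \nptrivial{} -- it factors through $T$ via $1_T$ -- so $g\circ\tau$ is \nptrivial{}, and the universal property of the \npkernel{} lifts $\tau$ uniquely as $\tau=f\circ\tau'$ for some $\tau'\colon T\to A$. Substituting back gives $f\circ h=f\circ(\tau'\circ\sigma)$, and (1) yields $h=\tau'\circ\sigma$, a factorisation of $h$ through $T$. I expect this step to be the main obstacle: the substitute for the vanishing of zero-composites is the absorption property that any morphism with trivial domain is already \nptrivial{}, and it is exactly this that allows one to lift the trivial factor through $f$ before cancelling.

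For (3), the cokernel universal property applied to $g\circ f$ produces a unique $g'\colon Q\to C$ with $g'\circ q=g$. Then for any $x\colon X\to B$ with $q\circ x$ \nptrivial{}, the composite $g\circ x=g'\circ(q\circ x)$ is \nptrivial{} as well, so $x$ factors uniquely through $f$ by the \npkernel{} property of $(A,f)$; together with the fact that $q\circ f$ is \nptrivial{} by definition of cokernel, this identifies $(A,f)$ with the \npkernel{} of $q$. For (4), let $b'\colon A'\to A$ be the other pullback projection, so $b\circ f'=f\circ b'$. Then $(g\circ b)\circ f'=(g\circ f)\circ b'$ is \nptrivial{}; and given $x\colon X\to B'$ with $(g\circ b)\circ x$ \nptrivial{}, the \npkernel{} property of $f$ produces a unique $y\colon X\to A$ with $f\circ y=b\circ x$, whence the pullback yields a unique $z\colon X\to A'$ with $f'\circ z=x$ and $b'\circ z=y$. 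Uniqueness of such a $z$ under the sole constraint $f'\circ z=x$ follows by using (1) to cancel $f$: any alternative lift $z'$ satisfies $f\circ(b'\circ z')=b\circ f'\circ z'=b\circ x=f\circ y$, forcing $b'\circ z'=y$, so the pullback universal property closes the argument.
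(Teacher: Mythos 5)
Your proof is correct in all four parts; the paper itself states this proposition without proof, deferring to the cited background references, and your argument is exactly the standard direct verification one would give there. In particular, your handling of part (2) -- lifting the trivial factor $\tau\colon T\to B$ through $f$ via the kernel's universal property (using that any map out of an object of $\tobj$ is itself \nptrivial{}) and then cancelling the monomorphism $f$ -- is the right substitute for the pointed-case ``zero composite'' argument, and the remaining parts follow the expected pattern.
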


    We will call \emph{\npnormmono{}s} the   monomorphisms in $\cat C$ underlying \npkernel{}s. We denote by $\NMono$ the class of \npnormmono{}s in $\cat C$, and we graphically represent them using the special arrow `$\nmonoto$'. By the previous proposition, the class of \npnormmono{}s is pullback stable. Dually, we call \emph{\npnormepi{}s} the   epimorphisms underlying \npcokernel{}s. We denote by $\NEpi$ the class of \npnormepi{}s, and we use  the special arrow `$\nepito$' to graphically represent them. 
    
    Note that \npnormmono{} are not, in general, regular monomorphisms, and \npnormepi{} are not regular epimorphisms (see \cite{PN}). 
    Moreover, in the general case, \npkernel{}s are not defined as limits. They can, however, be obtained as pullbacks when the subcategory $\cat Z$ is mono-coreflective. We have the following proposition.

\begin{proposition}
{
\renewcommand{\corefl}[1]{Z}
\renewcommand{\cuni}[1]{\epsilon}
    \label{monocoref} Given an object $B$ in $\cat C$ and a monomorphism $\cuni B\colon \corefl B\monoto B$ with $Z\in\tobj$, then $(\corefl B,\cuni B)$ is a coreflection of $B$ in $\tobj$ if and only if it is a \npkernel{} of $\id B$. 
    In this case, given maps 
    \(
    \begin{tikzcd}[cramped, sep=1.5em]
    K\arrow[r, "k"]&A\arrow[r,"f"]&B,
    \end{tikzcd}
    \)
    then $(K,k)$ is the \npkernel{} of $f$ if and only if there exists a (unique) map $h\colon K\to \corefl B$ such that the following square is a pullback.
    \begin{equation*}
    \begin{tikzcd}
    K\arrow[r, "h"]\arrow[d, "k"'] & \corefl B\arrow[d, "\cuni B"]
    \\
    A\arrow[r, "f"'] &B
    \end{tikzcd}
    \end{equation*}
}
\end{proposition}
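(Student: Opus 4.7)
The plan is to prove both biconditionals by direct chases of the universal properties involved, with the easier half of the second part handled by invoking the pullback-stability of $\tobj$-kernels recorded in \zcref{pb-nmono}(\ref{specific-pb-mono}).

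For the first biconditional, I would argue as follows. Suppose $(Z,\epsilon)$ is the coreflection of $B$ in $\tobj$. The composite $\id_B\circ\epsilon=\epsilon$ is automatically $\tobj$-trivial since $Z\in\tobj$. Given any $x\colon X\to B$ with $\id_B\circ x=x$ $\tobj$-trivial, I would factor $x=v\circ u$ through some $Z'\in\tobj$, lift $v=\epsilon\circ v'$ uniquely via the coreflection, and take $v'\circ u$ as the required factorisation; uniqueness is immediate because $\epsilon$ is mono. Conversely, if $(Z,\epsilon)$ is the $\tobj$-kernel of $\id_B$, then for every $v\colon Z'\to B$ with $Z'\in\tobj$ the composite $\id_B\circ v=v$ is $\tobj$-trivial, and the kernel's universal property directly supplies the unique factorisation of $v$ through $\epsilon$ that a coreflection requires.

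For the second biconditional I would use the first part to treat $(Z,\epsilon)$ simultaneously as coreflection and as $\tobj$-kernel of $\id_B$. In the forward direction, assuming $(K,k)$ is a $\tobj$-kernel of $f$, the composite $f\circ k$ is $\tobj$-trivial, so the kernel property of $\epsilon$ supplies a unique $h\colon K\to Z$ with $\epsilon\circ h=f\circ k$. To verify that the resulting square is a pullback, I would test it against an arbitrary cone $(x\colon X\to A,\,y\colon X\to Z)$ satisfying $f\circ x=\epsilon\circ y$: since $\epsilon\circ y$ factors through $Z\in\tobj$, the composite $f\circ x$ is $\tobj$-trivial, so universality of $(K,k)$ yields a unique $u\colon X\to K$ with $k\circ u=x$; the remaining compatibility $h\circ u=y$ then follows from $\epsilon\circ(h\circ u)=f\circ k\circ u=f\circ x=\epsilon\circ y$ together with $\epsilon$ being mono.

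The converse direction of the second biconditional is essentially immediate: if $k$ arises as a pullback of $\epsilon$ along $f$, then since $\epsilon$ is the $\tobj$-kernel of $\id_B$, \zcref{pb-nmono}(\ref{specific-pb-mono}) identifies $k$ as the $\tobj$-kernel of $\id_B\circ f=f$. I do not anticipate any serious obstacle here; the main care needed is in keeping track of which of the two equivalent descriptions of $(Z,\epsilon)$ is being invoked at each step and in appealing to $\epsilon$ being mono to obtain the various uniqueness claims (including the uniqueness of $h$ itself, which is forced by $\epsilon\circ h=f\circ k$).
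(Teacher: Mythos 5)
Your proof is correct and is the standard argument; the paper itself omits a proof of this proposition, deferring to the cited references, and your chase of the two universal properties (coreflection versus $\tobj$-kernel of the identity, using that $\epsilon$ is a monomorphism for all uniqueness claims) together with the appeal to the pullback-stability of $\tobj$-kernels for the easy half of the second biconditional is exactly the intended reasoning. No gaps.
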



Next, we turn to a central property of \npkernel{}s, which plays a crucial role in this paper. In the pointed setting, the kernel of the pullback of a given map is isomorphic to the kernel of the original map. In the relative context, instead, this property holds only in a significantly weakened form -- posing one of the main obstacles to the generalisations we aim to develop.
\begin{proposition}
    \label{kernel-of-parall}
    Consider the following pullback diagram where $(K,k)$ is the \npkernel{} of $f$.
    \[
    \begin{tikzcd}
        K\rar["k",normmono]&X\rar["f"]\dar["x"']&Y\dar["y"]
        \\
        &X'\rar["f'"']&Y'
    \end{tikzcd}
    \]
    If $y$ is a \npnormmono{}, then $(K,\comp kx)$ is the \npkernel{} of $f'$.
\end{proposition}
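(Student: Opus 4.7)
The plan is to verify directly the two defining conditions of a $\tobj$-kernel of $f'$: that the composite $f'\circ x\circ k$ is $\tobj$-trivial, and that $x\circ k$ enjoys the corresponding universal property. Here I take for granted the implicit assumption that $(K,k)$ is the $\tobj$-kernel of $f$ (the only reasonable reading of the statement, since otherwise $f'\circ x\circ k$ need not be trivial).

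The triviality step is immediate from pullback commutativity: $f'\circ x\circ k = y\circ f\circ k$, and since $(K,k)$ is the $\tobj$-kernel of $f$ the composite $f\circ k$ factors through an object of $\tobj$, hence so does $y\circ f\circ k$.

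For the universal property, consider $t\colon T\to X'$ with $f'\circ t$ trivial, so that $f'\circ t = z'\circ u$ for some $z'\colon Z'\to Y'$ with $Z'\in\tobj$. The key use of the normmono hypothesis on $y$ is the observation that $z'$ itself factors through $y$: writing $y$ as the kernel of some $q\colon Y'\to Q$, the composite $q\circ z'$ has domain in $\tobj$ and is therefore $\tobj$-trivial, so by the universal property of the kernel $z' = y\circ \bar z'$ for a unique $\bar z'\colon Z'\to Y$. Then $f'\circ t = y\circ(\bar z'\circ u)$, and the pullback universal property produces a unique $\tilde t\colon T\to X$ with $x\circ \tilde t = t$ and $f\circ \tilde t = \bar z'\circ u$. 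The latter equation exhibits $f\circ \tilde t$ as trivial, so the $\tobj$-kernel property of $k$ delivers a unique $t'\colon T\to K$ with $k\circ t' = \tilde t$, whence $x\circ k\circ t' = t$. Uniqueness is then handled by cancelling the monomorphism $y$ (again using that $y$ is a normmono, hence mono by \zcref{pb-nmono}) after post-composing two candidates with $f'$, then invoking the joint monicity of the pullback projections and the monicity of $k$.

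The main obstacle is precisely the existence step, more narrowly the lifting of $z'$ through $y$: without the normmono hypothesis there is no mechanism to bring the trivial factorisation of $f'\circ t$ back into the pullback so as to invoke the kernel property of $k$. Everything else is routine manipulation with pullbacks, monomorphisms, and universal properties; the proposition is really a statement about how much of the pointed kernel--pullback interaction survives in the relative setting, and the normmono hypothesis on $y$ turns out to be the minimal piece of data needed to recover it on one side of the square.
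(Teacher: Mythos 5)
Your proof is correct and follows essentially the same route as the paper's: factor the trivial composite through $y$ using that $y$ is a $\tobj$-kernel, lift into the pullback, observe the resulting composite with $f$ is trivial, and invoke the kernel property of $k$. The only cosmetic differences are that you lift the trivial-object leg $z'$ through $y$ explicitly (the paper instead post-composes with the map $b$ that $y$ is the kernel of) and you get triviality of $f\circ\tilde t$ directly from its factorisation through $Z'$ rather than via \zcref{pb-nmono}.
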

\begin{proof}
    Suppose $(Y,y)$ be the \npkernel{} of a map $b\colon Y'\to B$, and let $a\colon A\to X'$ be a map such that $\mcomp{a,f'}$ is \nptrivial{}. It follows that the composite $\mcomp{a,f',b}$ is \nptrivial{}, and hence there exists a map $a'\colon A\to Y$ such that $\mcomp{a,f'}=\mcomp{a',y}$. In turn, this yields a map $u\colon A\to X$ such that $\mcomp{u,f}=a'$ and $\mcomp{u,x}=a$. Since $\mcomp{u,f,y}=\mcomp{a,f'}$ is \nptrivial{}, by \zcref{pb-nmono} (\zcref*{normmono-is-tker}) we obtain that $\comp uf$ is \nptrivial{}, yielding a map $v\colon A\to K$ such that $\comp v k=u$.
    \[
    \begin{tikzcd}[ bezier bounding box]
        & A\arrow[dd, controls={+(-10em,0) and +(-10em,0)}, "a"']\arrow[dl, bend right, "v"' near end]\arrow[d, "u"']\arrow[dr, bend left, "a'"]
        \\
        K\rar["k"]&X\dar["x"']\rar["f"]&Y\dar["y"]
        \\
        &X'\rar["f'"']&Y'\rar["b"']& B
    \end{tikzcd}
    \]
     It immediately follows that $\mcomp{v,k,x}= a$ as required.
\end{proof}
\begin{remark}
\label{remark-pb}
   If \zcref{kernel-of-parall} held for arbitrary $y$, then the full subcategory $\tobj$ of trivial objects would necessarily be a groupoid.  Indeed, consider any map $y\colon Y\to Y'$ in $\tobj$. The following commutative square is a pullback.
    \[
    \begin{tikzcd}
        Y\rar[equal]\dar["y"']&Y\dar["y"]
        \\
        Y'\rar[equal] &Y'
    \end{tikzcd}
    \]
    Since $Y,Y'\in\tobj$, the \npkernel{} of $\id Y$ is $(Y,\id Y)$ and the \npkernel{} of $\id{Y'}$ is $(Y',\id{Y'})$.Therefore, if $\id Y $ and $\id {Y'}$ had the same \npkernel{}, the map $y$ would be an isomorphism. 
    
    Note that one can easily prove that a partial converse also holds: if $\tobj$ is a groupoid and a coreflective subcategory of $\cat C$, then \zcref{kernel-of-parall} holds for arbitrary $y$.
\end{remark}
Finally, we consider a class of morphisms that will play an important role in this paper. We characterise this class in the following proposition.
\begin{proposition}
  \label{tker-charact} Consider morphisms in $\cat C$
  \[
  \begin{tikzcd}
      A\rar["f", normmono] & B\rar["g"] & C,
  \end{tikzcd}
  \]
  with $(A,f)$ the \npkernel{} of $g$.   Then the following are equivalent.
\begin{enumerate*}[(i)]
    \item The map $f$ is \nptrivial{};
    \item $A$ is in $\cat Z$;
    \item $(A,f)$ is the \npkernel{} of an isomorphism;
    \item $(A,f)$ is a coreflection of $B$ in $\cat Z$; \item every map $x\colon X\to B$ is \nptrivial{} whenever $\comp xg$ is \nptrivial{}.
\end{enumerate*}
\end{proposition}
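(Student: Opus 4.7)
The plan is to prove the five conditions equivalent via the short cycle $(i)\Rightarrow(ii)\Rightarrow(iv)\Rightarrow(iii)\Rightarrow(i)$, together with a separate direct argument that $(i)\Leftrightarrow(v)$ based on the universal property of $(A,f)$.

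For $(i)\Rightarrow(ii)$ I would apply \zcref{pb-nmono}(\zcref*{normmono-is-tker}) to the composite $\comp{\id_A}{f}=f$: since $f$ is \nptrivial{} by hypothesis, so is $\id_A$, which means $A$ is a retract of some object of $\cat Z$. Under the standing (mild) assumption that the class of trivial objects is closed under retracts, this gives $A\in\cat Z$. For $(ii)\Rightarrow(iv)$, the key observation is that any morphism $z\colon Z'\to B$ out of $Z'\in\cat Z$ satisfies $\comp zg$ \nptrivial{} automatically (it factors through $Z'\in\cat Z$); the universal property of the \npkernel{} then produces a unique factorization of $z$ through $f$, and since $A\in\cat Z$, this exhibits $(A,f)$ as the coreflection of $B$ in $\cat Z$.

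The implication $(iv)\Rightarrow(iii)$ is immediate from \zcref{monocoref}, which identifies a coreflection of $B$ in $\cat Z$ with the \npkernel{} of the isomorphism $\id_B$. For $(iii)\Rightarrow(i)$: if $(A,f)$ is the \npkernel{} of an isomorphism $h$, then $\comp fh$ is \nptrivial{} by definition, and since $h$ is invertible, any factorization of $\comp fh$ through an object of $\cat Z$ yields, via composition with $h^{-1}$, a factorization of $f$ through the same object, so $f$ is \nptrivial{}.

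For the separate equivalence $(i)\Leftrightarrow(v)$: applying $(v)$ to $x=f$, which satisfies $\comp fg$ \nptrivial{} by the kernel assumption, gives $(i)$. Conversely, if $f$ is \nptrivial{} and $x\colon X\to B$ satisfies $\comp xg$ \nptrivial{}, then by the universal property of the kernel $x=\comp{x'}f$ for some $x'\colon X\to A$, and this composite factors through the same trivial object as $f$, hence $x$ is \nptrivial{}. The main obstacle is really only the first step $(i)\Rightarrow(ii)$, whose conclusion depends on the closure of $\cat Z$ under retracts (a property customarily incorporated into the notion of a class of trivial objects in this framework); the remaining implications are formal consequences of the universal properties of relative kernels recorded in \zcref{pb-nmono,monocoref}.
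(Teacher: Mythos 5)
The paper states this proposition without proof (it is part of the background material deferred to the references cited at the start of the subsection), so there is no argument of the authors' to compare yours against. Judged on its own, your decomposition --- the cycle (i) $\Rightarrow$ (ii) $\Rightarrow$ (iv) $\Rightarrow$ (iii) $\Rightarrow$ (i) together with the direct equivalence (i) $\Leftrightarrow$ (v) --- is the natural one, and the implications (ii) $\Rightarrow$ (iv), (iv) $\Rightarrow$ (iii), (iii) $\Rightarrow$ (i) and (i) $\Leftrightarrow$ (v) are all correct as written (for (iv) $\Rightarrow$ (iii), note that \zcref{monocoref} applies because $f$, being a \npkernel{}, is a monomorphism).

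The one step that deserves more care is exactly the one you flag, (i) $\Rightarrow$ (ii), and your diagnosis is accurate: applying \zcref{pb-nmono} (\zcref*{normmono-is-tker}) to $\id A$ only yields that $\id A$ factors through some $Z\in\cat Z$, i.e.\ that $A$ is a retract of a trivial object. However, you should not dismiss this as covered by a ``customary'' convention: the paper only assumes $\tobj$ to be a full \emph{replete} subcategory, and repleteness gives closure under isomorphisms, not under retracts. Without some further hypothesis the implication genuinely fails --- one can choose $\cat Z$ so that an object admitting a \npkernel{} with \nptrivial{} inclusion is a proper retract of a trivial object. The gap closes itself in every context where the proposition is actually used, because from \zcref{wnorm} onwards $\tobj$ is assumed mono-coreflective, and a mono-coreflective full replete subcategory is automatically closed under retracts: if $\comp uv=\id A$ with $u\colon A\to Z$, $v\colon Z\to A$ and $Z\in\tobj$, then $v$ factors through the monic counit $\cuni A\colon \corefl A\to A$, so $\cuni A$ is a split epimorphism as well as a monomorphism, hence an isomorphism, and $A\in\tobj$ by repleteness. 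I would either insert this short argument in place of the appeal to a standing assumption, or state retract-closure of $\tobj$ explicitly as a hypothesis of the proposition.
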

We denote by $\TKer$ the class of maps whose \npkernel{} exists and is \nptrivial{} (according to any of the equivalent statements in \zcref{tker-charact}), and we graphically represent a map with a \nptrivial{} \npkernel{} by the special arrow `\tkerto'. In the presence of \npkernel{}s, by \zcref{pb-nmono}, \zcref{normmono-is-tker}, we have that $\NMono\subseteq\TKer$.
\subsection*{Non-pointed torsion theories}
In this subsection we review the notion of torsion theory based on \npkernel{}s and \npcokernel{}, as just discussed. We omit the proofs of known results. For further details, see, for example, \cite{HTT,PRETORSION, GRANDIS20}.
\begin{definition}
\label{ztt}
    A \emph{\ztt{}} on $\cat C$ is a pair of full replete subcategories \ttt{} such that the following conditions hold.
    \begin{enumerate}
        \item Every morphism $f\colon A\to B$ in $\cat C$, with $A\in\TNC$ and $B\in\TFC$, is \nptrivial{}.
        \item For every object $X$ in $\cat C$, there exists a \npexact{} sequence
        \[
        \begin{tikzcd}
            A\rar &X\rar &B
        \end{tikzcd}
        \]
        with $A\in\TNC$ and $B\in\TFC$. Such a sequence is called a $(\TNC,\TFC)$-presentation of $X$.
    \end{enumerate}
\end{definition}
Notice that in \cite{PRETORSION} a \ztt{} \ttt{} is called a \emph{pretorsion theory}, and $\tobj$ is taken to be the intersection $\TNC\cap \TFC$. We adopt the name \ztt{} to emphasise the role of the subcategory of trivial objects.

For the remainder of the section, we will consider some initial properties and characterisations of the subcategories $\TNC$ and $\TFC$ underlying a \ztt{}, most of which are known. We will use the following terminology for a subcategory $\cat A \hookrightarrow\cat C$:
\begin{itemize}
    \item $\cat A$ is a \emph{\ztf{}} (respectively, \emph{\ztn{}}) subcategory of $\cat C$ if there exists a subcategory $\cat B$ of $\cat C$ such that $(\cat B, \cat A)$ (respectively $(\cat A,\cat B)$) is a \ztt{} on $\cat C$;
    \item $\cat A$ is a \emph{\zrefl{}} (respectively, \emph{\zcor{}}) subcategory of $\cat C$ if the inclusion $\cat A \hookrightarrow\cat C$ admits a left (respectively, right) adjoint and the components of the unit (respectively, of the counit) of the adjunction are \npnormepi{}s (respectively, \npnormmono{}s);
    \item $\cat A$ is \emph{closed under \zext{}s} if $X\in\cat A$ whenever there exists a \npexact{} sequence $A_1\to X\to A_2$ such that $A_1,A_2\in\cat A$.
\end{itemize}

\begin{proposition}
\label{TF-is-ref}
\label{char-tor-obj}
Let \ttt{} be a \ztt{} on $\cat C$.
\begin{enumerate}
    \item \ttt{}-presentations are unique up to unique isomorphism: given two \ttt-presentations $A\to X\to B$ and $A'\to X\to B'$ of the same object $X$, there exist unique isomorphisms $A\to A'$ and $B\to B'$ such that the following diagram is commutative.
   \begin{equation}
   \label{T-F-prez}
    \begin{tikzcd}[row sep=2em, column sep = 5em]
        A\rar\dar&X\rar\dar[equal]&B\dar
        \\
        A'\rar&X\rar&B'
    \end{tikzcd}
    \end{equation}
    \item The inclusion functor $\TFC\hookrightarrow\cat C$ admits a left adjoint $\TFF\colon \cat C\to\TFC$ and the inclusion functor $\TNC\hookrightarrow \cat C$ admits a right adjoint $\TNF\colon\cat C\to\TNC$. Calling $\TFU{}$ the unit of the first adjunction and $\TNU{}$ the counit of the latter, then for every object $X$ in $\cat C$, the following is a \ttt-presentation of $X$:
    \[
    \begin{tikzcd}
        \TNP X\rar["\TNU X"]&X\rar["\TFU X"]&\TFP X.
    \end{tikzcd}
    \]
    \item \label{TF-is-ref-3}Consider a \ttt-presentation of an object, like \ref{T-F-prez} above. Then the following are equivalent
    \begin{enumerate}[i)]
        \item $X\in\TFC$;
        \item $\TFU X$ is an isomorphism;
        \item $\TNP X\in\tobj$;
        \item $\TNU X$ is \nptrivial{};
        \item for all $Y\in\TNC$, all morphisms $f\colon Y\to X$ are \nptrivial{}.
    \end{enumerate}
    Of course, a dual characterisation holds for torsion objects.
\end{enumerate}
\end{proposition}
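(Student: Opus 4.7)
The plan is to handle the three parts in the order given, since each builds on the previous. For part~1, the strategy is to lean on the defining property of a \ztt{}: any morphism from $\TNC$ to $\TFC$ is \nptrivial{}. Given two \ttt-presentations $A\to X\to B$ and $A'\to X\to B'$, the composite $A\to X\to B'$ has source in $\TNC$ and target in $\TFC$, hence is \nptrivial{}; the universal property of the \npkernel{} $(A', A'\to X)$ of $X\to B'$ then produces a unique arrow $A\to A'$ factoring $A\to X$. Swapping the roles of the two presentations yields $A'\to A$ over $X$, and the uniqueness clause forces the two composites to be mutually inverse. The dual argument, applied to the \npcokernel{} $(B, X\to B)$ of $A\to X$, produces the isomorphism $B\to B'$.

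For part~2, part~1 already defines $\TFF$ and $\TNF$ on objects. To extend $\TFF$ to a morphism $f\colon X\to Y$, I would observe that the composite $\TFU Y\circ f\circ \TNU X\colon \TNP X\to\TFP Y$ has source in $\TNC$ and target in $\TFC$, hence is \nptrivial{}; since $\TFU X$ is the \npcokernel{} of $\TNU X$, the universal property delivers a unique $\TFF f\colon \TFP X\to \TFP Y$ with $\TFF f\circ \TFU X=\TFU Y\circ f$, and uniqueness automatically yields functoriality. The same universal property provides the adjunction $\TFF\dashv (\TFC\hookrightarrow\cat C)$: for $Y\in\TFC$, any $h\colon X\to Y$ makes $h\circ\TNU X$ a map from $\TNC$ to $\TFC$, hence \nptrivial{} and factoring uniquely through the \npcokernel{} $\TFU X$ (which is an epi since it is a \npnormepi{}). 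The treatment of $\TNF$ is dual, and $\TFU X$, $\TNU X$ are by construction the components of unit and counit.

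For part~3, the cleanest route is the cycle $\mathrm{v})\Rightarrow\mathrm{iv})\Rightarrow\mathrm{iii})\Rightarrow\mathrm{ii})\Rightarrow\mathrm{i})\Rightarrow\mathrm{v})$. Of these, i)$\Rightarrow$v) is the defining property of a \ztt{}, v)$\Rightarrow$iv) follows by taking $Y=\TNP X$, iv)$\Leftrightarrow$iii) is a direct application of \zcref{tker-charact} to the \npnormmono{} $\TNU X$, and ii)$\Rightarrow$i) follows from repleteness of $\TFC$ (since $X\cong \TFP X\in \TFC$). The genuinely delicate link is iii)$\Rightarrow$ii): assuming $\TNP X\in\tobj$, the map $\TNU X$ is \nptrivial{} (it factors through the trivial object $\TNP X$ itself), so the universal property of $\TFU X$ as \npcokernel{} of $\TNU X$, applied to $\id_X\colon X\to X$, yields a unique $s\colon \TFP X\to X$ with $s\circ \TFU X=\id_X$. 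Then from $\TFU X\circ s\circ\TFU X=\TFU X=\id_{\TFP X}\circ\TFU X$ the uniqueness clause of the factorisation forces $\TFU X\circ s=\id_{\TFP X}$, making $\TFU X$ an isomorphism. This retraction-plus-uniqueness trick is the main obstacle in the argument; everything else amounts to unpacking the universal properties of the relative (co)kernel.
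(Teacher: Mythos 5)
Your proof is correct; every step (the triviality of torsion-to-torsion-free composites feeding the (co)kernel universal properties in parts 1 and 2, and the retraction-plus-uniqueness argument for iii)$\Rightarrow$ii) in part 3) is the standard argument for this result. The paper itself omits the proof, citing it as known from the literature (\cite{HTT,PRETORSION,GRANDIS20}), and your reconstruction matches that standard treatment.
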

\begin{proposition}
\label{other-TF-properties}
    Let $\TFC$ be a \ztf{} subcategory of $\cat C$. The following properties hold.
    \begin{enumerate}
        \item $\TFC$ is a \zrefl{} subcategory of $\cat C$;
        \item If $f\colon X\to Y$ is a \npnormmono{} or a map with \nptrivial{} \npkernel{} and $Y\in\TFC$, then $X\in\TFC$;
        \item $\TFC$ is closed under \zext{}s in $\cat C$.
    \end{enumerate}
    \begin{proof}
        We just prove that if $f\colon X\tkerto Y$ has a \nptrivial{} \npkernel{} and $Y\in\TFC$, then $X\in\TFC$. Consider the following diagram, where $\TNU{}\colon\TNP{}\tto \id{\cat C}$ denotes the coreflection of the torsion-free subcategory.
        \[
        \begin{tikzcd}
        \TNP X\rar{\TNP f}\dar["\TNU X"']&\TNP Y\dar{\TNU Y}
        \\
        X\rar["f"',tker]&Y
        \end{tikzcd}
        \]
        As $Y\in\TFC$, the coreflection $\TNU Y$ is \nptrivial{}, and hence $\mcomp{\TNU X,f}=\mcomp{\TNP f,\TNU Y}$ is \nptrivial{}. Since $f$ has \nptrivial{} \npkernel{}, by \zcref{tker-charact} it follows that $\TNU X$ is \nptrivial{} and thus $X\in\TFC$. 
    \end{proof}
\end{proposition}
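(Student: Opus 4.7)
My plan is to verify the three statements in turn, with (1) and (2) essentially immediate from what has already been established, and (3) carrying the real content. Statement (1) is already contained in \zcref{TF-is-ref}, which supplies a left adjoint $\TFF\colon\cat C\to\TFC$ whose unit component $\TFU X$ at any object $X$ is, by construction, the right-hand map in the $\ttt{}$-presentation $\TNP X\to X\to \TFP X$, and hence a \npnormepi{} by definition. For the \npnormmono{} case of (2), I would reduce to the case of a map with \nptrivial{} \npkernel{} that is already treated in the displayed proof, by invoking the inclusion $\NMono\subseteq\TKer$ recorded just before the torsion-theory subsection: every \npnormmono{} has a \nptrivial{} \npkernel{}, so the displayed argument applies unchanged.

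The main content is in (3). I would start from a \npexact{} sequence $A_1\to X\to A_2$ with $A_1,A_2\in\TFC$ and the canonical $\ttt{}$-presentation $\TNP X\to X\to \TFP X$ of $X$ from \zcref{TF-is-ref}. Using the equivalences of \zcref{TF-is-ref}, it suffices to show that $\TNU X\colon \TNP X\to X$ is \nptrivial{}. The argument would invoke condition (1) of \zcref{ztt} twice. First, since $\TNP X\in\TNC$ and $A_2\in\TFC$, the composite $\TNP X\to X\to A_2$ is \nptrivial{}; as $A_1\to X$ is the \npkernel{} of $X\to A_2$, the universal property of the \npkernel{} yields a factorisation $g\colon \TNP X\to A_1$ with $\TNU X$ equal to $g$ followed by $A_1\to X$. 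Second, since $\TNP X\in\TNC$ and $A_1\in\TFC$, the map $g$ is itself \nptrivial{}, and hence so is $\TNU X$.

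The only mildly delicate step is in (3), where one exploits the torsion/torsion-free orthogonality twice in succession: once to obtain a factorisation through $A_1$, and once to trivialise that factorisation. Beyond this, everything reduces to bookkeeping with the universal properties and the equivalences of \zcref{TF-is-ref}, without needing any structural assumption on $\cat C$ beyond the existence of the torsion theory itself.
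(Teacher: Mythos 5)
Your proposal is correct and consistent with the paper: for the one case the authors actually write out -- a map with \nptrivial{} \npkernel{} into an object of $\TFC$ -- you adopt that argument as given, and the remaining items, whose proofs the paper omits as known, you fill in with the standard arguments. In particular, (1) is indeed immediate from \zcref{TF-is-ref}, and your two-step orthogonality argument for (3) -- the composite $\TNP X\to X\to A_2$ is \nptrivial{} by the first axiom of \zcref{ztt}, so $\TNU X$ factors through the \npkernel{} $A_1\nmonoto X$, and the factorisation $\TNP X\to A_1$ is itself \nptrivial{} for the same reason, whence $X\in\TFC$ by \zcref{TF-is-ref} -- is exactly the expected proof and uses nothing beyond the torsion-theory axioms. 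One small precision point on the \npnormmono{} case of (2): the inclusion $\NMono\subseteq\TKer$ is recorded in the paper only ``in the presence of \npkernel{}s'', since membership in $\TKer$ requires the \npkernel{} of the map itself to exist; this is immaterial, however, because the displayed argument only uses the cancellation property that any $h$ with $\comp hf$ \nptrivial{} is itself \nptrivial{}, which holds for an arbitrary \npnormmono{} $f$ directly by \zcref{pb-nmono}. With that citation adjusted, your reduction goes through unchanged.
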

\begin{remark}
    \label{H-K}
    Let $\TFC$ be a \zrefl{} subcategory of $\cat C$, with reflector $\TFF$ and unit $\TFU{}$. Assume that the components of $\TFU{}$ admit \npkernel{}s. We can consider the two following full subcategories of $\cat C$.
    \begin{itemize}
        \item $\cat K = \{X\in\cat C\mid X\textnormal{ is a \npkernel{} of some component of }\TFU{}\}$
        \item $\begin{aligned}[t]
            \!\cat H&=\{X\in\cat C\mid \TFU X\textnormal{ is \nptrivial{}}\}
            \\
            & =\{X\in\cat C\mid \TFP X\in\tobj\}
            \\
            &=\{X\in\cat C\mid \textnormal{every map $f\colon X\to Y$ with $Y\in\TFC$ is \nptrivial{}}\}
        \end{aligned}$
    \end{itemize}
    We have that $\TFC$ is \ztf{}
 if and only if $\cat K=\cat H$ if and only if $\cat K\subseteq \cat H$. Indeed, if $\TFC$ is \ztf{}, then $\cat K=\cat H$ is the associated \ztn{} subcategory by the dual of \zcref{TF-is-ref}, \zcref{TF-is-ref-3}. Vice versa, if $\cat K\subseteq \cat H$, then $(\cat K,\TFC)$ is a \ztt{} because every object admits a $(\cat K,\TFC)$-presentation by definition of $\cat K$, and every morphism from an object in $\cat K$ to an object in $\TFC$ is \nptrivial{} by definition of $\cat H$.
\end{remark}
\begin{proposition}
\label{idmpt-rad}
    Let $\TFC$ be a \zrefl{} subcategory of $\cat C$ with reflector $\TFF$ and unit $\TFU{}$. Assume that, for all objects $X\in\cat C$, the map $\TFU X\colon X\to\TFP X$ admits a \npkernel{} $k_X\colon KX\to X$. Then $\TFC$ is \ztf{} if and only if $k_{KX}$ is an isomorphism for all $X\in\cat C$.
\end{proposition}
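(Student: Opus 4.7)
The plan is to reduce the statement to the criterion given in \zcref{H-K}: $\TFC$ is $\tobj$-torsion-free if and only if $\cat K\subseteq \cat H$, where $\cat K$ is the class of objects arising as $\tobj$-kernels of units $\TFU Y$, and $\cat H$ is the class of $X$ such that $\TFU X$ is $\tobj$-trivial. Setting $K\colon\cat C \to \cat C$, $X\mapsto KX$, the class $\cat K$ consists (up to iso) precisely of the objects $KY$, so $\cat K\subseteq \cat H$ amounts to asking that $\TFU{KX}$ be $\tobj$-trivial for every $X\in\cat C$.

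The key technical step is then a small lemma: for any object $A$ with $\tobj$-kernel $k_A\colon KA\to A$ of $\TFU A$, the map $k_A$ is an isomorphism if and only if $\TFU A$ is $\tobj$-trivial. The `if' direction is immediate from \zcref{tker-charact}: if $\TFU A$ is $\tobj$-trivial, then $(A,\id A)$ is itself a $\tobj$-kernel of $\TFU A$, so by uniqueness $k_A$ must be an isomorphism. Conversely, if $k_A$ is an isomorphism, then $\TFU A = \comp{k_A^{-1}}{(\comp{k_A}{\TFU A})}$ factors through $\tobj$ because $\comp{k_A}{\TFU A}$ does by definition of $\tobj$-kernel. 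Applying this lemma with $A=KX$ gives that $k_{KX}$ is an isomorphism iff $\TFU{KX}$ is $\tobj$-trivial iff $KX\in\cat H$.

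Combining these two observations, $k_{KX}$ is an isomorphism for all $X\in\cat C$ if and only if $KX\in\cat H$ for all $X$, i.e.\ $\cat K\subseteq \cat H$, which by \zcref{H-K} is equivalent to $\TFC$ being $\tobj$-torsion-free. I do not expect any serious obstacle here: the argument is a direct unwinding via \zcref{tker-charact} and \zcref{H-K}, and the only point requiring a little care is checking that a $\tobj$-trivial unit indeed forces the identity to be a $\tobj$-kernel, which is precisely the content of the equivalence (i)$\Leftrightarrow$(ii)$\Leftrightarrow$(iv) in \zcref{tker-charact}.
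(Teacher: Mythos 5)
Your argument is correct. The paper itself does not prove this proposition (it is listed among the known results whose proofs are omitted, with references to the pretorsion-theory literature), so there is nothing to compare line by line; but your reduction is exactly the natural one and is consistent with how the paper uses the statement: \zcref{H-K} gives that $\TFC$ is \ztf{} iff $\cat K\subseteq\cat H$, and your lemma that $k_A$ is an isomorphism iff $\TFU A$ is \nptrivial{} (split epi plus mono in one direction, factoring $\TFU A$ through the trivial factorisation of $\comp{k_A}{\TFU A}$ in the other) correctly translates this into the condition that $k_{KX}$ be invertible for all $X$. The only quibble is the citation of \zcref{tker-charact} for the ``if'' direction: that result characterises maps whose \npkernel{} is \nptrivial{}, not maps that are themselves \nptrivial{}, so it is not really what you are using there; the direct verification you sketch (that $(A,\id A)$ satisfies the universal property of the \npkernel{} of a \nptrivial{} map, whence $k_A$ is invertible by uniqueness, or equivalently that $k_A$ is a split epimorphism and a monomorphism by \zcref{pb-nmono}) is what actually carries the step, and it is sound.
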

{
\newcommand{\KSC}{\cat K}
\begin{proposition}
    Let $\TFC$ be a \zrefl{} subcategory of $\cat C$, with $\TFP{}\colon \cat C\to\TFC$ the reflector and $\TFU{}$ the unit of the reflection. Let $\KSC$ denote the full subcategory of $\cat C$ consisting of those objects that are isomorphic to a \npkernel{} of some component of the unit $\TFU{}$. Then the following are equivalent.
    \begin{enumerate}[i)]
        \item \label{tf-K-1}$\TFC$ is \ztf{};
        \item\label{tf-K-2} $\TFC$ is closed under \zext{}s and $\KSC$ is closed under \znq s. 
    \end{enumerate}
    \begin{proof}
        The implication \zcref{tf-K-1}$\implies$\zcref{tf-K-2} follows from \zcref{other-TF-properties} and its dual.

        Conversely, suppose \zcref{tf-K-2} holds. By \zcref{H-K}, it is enough to prove that any $X\in\KSC$ has \nptrivial{} $\TFC$-reflection. Let $X$ be any object in $\KSC$.  Since $\KSC$ is closed under \znq{}s, the $\TFC$-reflection $\TFP X$ of $X$ is also in $\KSC$ and hence, there exists a \npexact{} sequence
        \[
        \begin{tikzcd}
            \TFP X\arrow[r, normmono] &Y \arrow[r,"\TFU Y",normepi] &\TFP Y
        \end{tikzcd}
        \]
        for some objects $Y\in\cat C$. As $\TFC$ is closed under \zext{}s, we deduce that $Y$ lies in $\TFC$. Therefore, $\TFU Y$ is an isomorphism and $\TFP X\in\tobj$ (see \zcref{tker-charact}).
    \end{proof}
\end{proposition}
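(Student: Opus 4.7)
The plan is to prove the two implications separately, using \zcref{H-K} as the bridge between $\TFC$ being \ztf{} and the behaviour of the class $\KSC$. The forward direction is essentially bookkeeping: closure of $\TFC$ under \zext{}s follows directly from \zcref{other-TF-properties}, and by \zcref{H-K}, when $\TFC$ is \ztf{} the class $\KSC$ coincides with the associated \ztn{} subcategory. Applying the dual of \zcref{other-TF-properties} to any \npnormepi{} with source in $\KSC$ then shows that its target also lies in $\KSC$, i.e.\ that $\KSC$ is closed under \znq s.

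For the converse, by \zcref{H-K} it suffices to show $\TFP X \in \tobj$ for every $X \in \KSC$. Fix such an $X$. Since $\TFC$ is a \zrefl{} subcategory, the unit component $\TFU X$ is a \npnormepi{}, so the assumed closure of $\KSC$ under \znq s yields $\TFP X \in \KSC$. By definition of $\KSC$, there is then a \npexact{} sequence $\TFP X \nmonoto Y \nepito \TFP Y$ for some $Y \in \cat C$, whose two ends both lie in $\TFC$. Closure of $\TFC$ under \zext{}s then forces $Y \in \TFC$, so $\TFU Y$ is an isomorphism, and \zcref{tker-charact} yields that its \npkernel{} $\TFP X$ lies in $\tobj$.

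The main subtlety I expect lies in noticing that the definition of a \zrefl{} subcategory itself supplies the \npnormepi{} needed to activate the second closure hypothesis in the converse: without this observation, there is no obvious reason why $\TFP X$ should be a \znq{} of $X$. Once that is in place, the two closure hypotheses mesh cleanly into a \npexact{} sequence sandwiching the unknown object $Y$ between two torsion-free ones, and \zcref{tker-charact} delivers the conclusion without further effort.
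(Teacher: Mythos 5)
Your proposal is correct and follows essentially the same route as the paper: both directions hinge on \zcref{H-K}, the forward implication is read off from \zcref{other-TF-properties} and its dual, and the converse uses the \npnormepi{} unit to place $\TFP X$ in $\KSC$, sandwiches the resulting object $Y$ between two torsion-free objects to conclude $Y\in\TFC$, and invokes \zcref{tker-charact} to get $\TFP X\in\tobj$. No gaps.
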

}
\section{The context}
\label{sec-context}
In this section, we quickly review \emph{prenormal categories} and the closely related notion of \emph{semi-prenormal categories}. As mentioned in the introduction, the definition of prenormal category is based on that of a regular category, but shifts the focus from coequalisers and kernel pairs to (possibly non-pointed) kernels and cokernels. Semi-prenormal categories provide a weakening of this notion; their central condition (specifically, \zcref{wnorm-3}) was first studied by Grandis (\cite{GRANDIS13}), albeit with different aims and terminology.
We do not enter into detail here, but simply record the main properties needed for what follows. For further background, see \cite{PN}.
\begin{definition}
\label{wnorm}
Let $\cat C$ be a category and let $\tobj$ be a full, replete subcategory of $\cat C$. We say that $\cat C$ is \emph{\wnormal{}} if $\tobj$ is mono-coreflective in $\cat C$ and the following properties hold.
\begin{enumerate}
\zcsetup{reftype=condition}
    \item\label{wnorm-1} $\cat C$ admits pullbacks along \npnormmono{}s;
    \item\label{wnorm-2}  $\cat C$ admits \npcokernel{}s of \npkernel{}s;
    \item \label{wnorm-3}the pullback of a \npnormepi{} along a \npnormmono{} is a \npnormepi{}.
\end{enumerate}

We further say that $\cat C$ is \emph{$\tobj$-prenormal}, if it is finitely complete and \npnormepi{}s are stable under pullback along arbitrary morphisms.

When $\cat C$ is pointed and $\tobj$ is the class of zero objects, we refer to \mnormality{} simply as \mpnormality{}. 

\end{definition}
Before moving forward, we mention two simple pointed examples to give a more concrete sense of the above definitions. First of all, the category of commutative monoids is a regular category which is prenormal but not normal in the sense of \cite{NORMAL}, as regular and normal epimorphisms are both pullback-stable but they constitute different classes. The category of pointed sets is \wpnormal{} but not \pnormal{}, as normal epimorphisms of pointed sets are only stable under pullbacks along monomorphisms.

We now devote the rest of the section to a list of key properties of \mpnormal{} categories. From now on, and throughout the paper unless otherwise specified, we fix a category $\cat C$ with a (full, replete) mono-coreflective subcategory $\tobj$ such that $\cat C $ is \wpnormal{}. We denote by $\corefl{}$ the coreflector and by $\cuni{}$ the counit of the coreflection. Additional hypotheses will be introduced as needed.
\begin{proposition}
\label{fact-sys}
\facsys{} is a (orthogonal) factorisation system on $\cat C$. Moreover, if $\cat C$ is \normal{}, such factorisation system is stable.
\end{proposition}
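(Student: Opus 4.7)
The plan is to verify that every morphism in $\cat C$ admits a factorisation as a \npnormepi{} followed by a \npnormmono{}, that these two classes are orthogonal, and -- under the stronger prenormality assumption -- that the resulting factorisation is pullback-stable.

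I would begin with the existence of the factorisation. For any $f \colon A \to B$, take its \npkernel{} $k \colon K \nmonoto A$, which exists as the pullback of $\cuni B \colon \corefl B \nmonoto B$ along $f$ by \zcref{monocoref} and axiom \zcref{wnorm-1}. Let $e \colon A \nepito I$ be the \npcokernel{} of $k$, which exists by axiom \zcref{wnorm-2}; the universal property of $e$ (together with the fact that $f \circ k$ is \nptrivial{}) then yields a unique $m \colon I \to B$ with $f = m \circ e$. To identify $m$ as a \npnormmono{} I would compute $\ker m$ as the pullback $\ell \colon L \nmonoto I$ of $\cuni B$ along $m$, and use pullback pasting in the diagram
\[
\begin{tikzcd}
K \rar["p"] \dar["k"', normmono] & L \rar \dar["\ell"', normmono] & \corefl B \dar["\cuni B", normmono] \\
A \rar["e"', normepi] & I \rar["m"'] & B
\end{tikzcd}
\]
to see that the induced $p \colon K \to L$ is the pullback of $e$ along $\ell$, hence a \npnormepi{} by axiom \zcref{wnorm-3}; simultaneously, $\ell \circ p = e \circ k$ is \nptrivial{} and $\ell$ is a \npnormmono{}, so \zcref{pb-nmono} (\zcref*{normmono-is-tker}) forces $p$ to be \nptrivial{}. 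The fact that $p$ is both \nptrivial{} and a \npnormepi{}, combined with the mono-coreflectiveness of $\tobj$ and a retract-type argument exploiting that $\tobj$-valued maps factor through the coreflection, implies that $L \in \tobj$ and hence that $\ell$ coincides with $\cuni I$. From here, using the \npcokernel{} of $\cuni I$ (which exists by axiom \zcref{wnorm-2}) and the universal property of $e$, one promotes $m \in \TKer$ to $m \in \NMono$ by presenting it as a kernel of a suitable induced map.

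Next, I would verify orthogonality $\NEpi \perp \NMono$. Given a commutative square $v \circ e = m \circ u$ with $e$ a \npnormepi{} (say $e = \coker k$) and $m$ a \npnormmono{}, the composite $m \circ u \circ k = v \circ e \circ k$ is \nptrivial{}, so by \zcref{pb-nmono} (\zcref*{normmono-is-tker}) the map $u \circ k$ is \nptrivial{}. This allows $u$ to factor uniquely through $e$ as $u = d \circ e$, and the identity $m \circ d \circ e = v \circ e$ combined with the epicity of $e$ forces $m \circ d = v$. Uniqueness of $d$ again follows from $e$ being an epimorphism. Uniqueness of the factorisation up to unique isomorphism is then the standard consequence of orthogonality.

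For the stability assertion, assume $\cat C$ is $\tobj$-prenormal and consider a morphism $g \colon X \to B$. Pulling back in two stages, the pullback of $m$ along $g$ is a \npnormmono{} by \zcref{pb-nmono} (\zcref*{specific-pb-mono}), while the pullback of $e$ along the resulting map to $I$ is a \npnormepi{} precisely by the prenormality hypothesis. Pasting yields the $(\NEpi, \NMono)$-factorisation of the pullback of $f$, establishing stability.

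The main obstacle is the last step of the existence argument -- upgrading $m \in \TKer$ to $m \in \NMono$ -- which is where the interplay of all three axioms \zcref{wnorm-1}, \zcref{wnorm-2}, and \zcref{wnorm-3} is essential.
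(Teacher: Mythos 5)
There is a genuine error: you have misidentified the right class of the factorisation system. The system \facsys{} of \zcref{fact-sys} is $(\NEpi,\TKer)$ -- \npnormepi{}s on the left and morphisms with \nptrivial{} \npkernel{} on the right -- not $(\NEpi,\NMono)$. The paper says so explicitly in \zcref{sec-hereditary}, where maps with \nptrivial{} \npkernel{} are listed as ``the right class of a factorisation system, with \npnormepi{}s on the left'', and it uses exactly this cancellation structure in the proofs of \zcref{TT-and-CO,natsq-pb}. Your final step -- ``promoting'' $m\in\TKer$ to $m\in\NMono$ by exhibiting it as a \npkernel{} of some induced map -- cannot work: in the category of commutative monoids (which the paper cites as a prenormal category), the inclusion $\mathbb N\hookrightarrow\mathbb Z$ has trivial kernel but is not a normal monomorphism, and it admits no (normal epi, normal mono) factorisation at all, since any normal submonoid of $\mathbb Z$ containing $\mathbb N$ is all of $\mathbb Z$. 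So the statement you are actually proving in the existence step is false, and the obstacle you flag at the end is not a technicality but an impossibility. (Note also that the paper itself gives no proof of \zcref{fact-sys}, deferring to \cite{PN}.)

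That said, everything before that step is the correct core of the argument for the true statement: factor $f$ as $e=\operatorname{coker}(\ker f)$ followed by the induced $m$, and your pasted-pullback computation showing that $\ker m$ is the pullback of $e$ along a \npnormmono{}, hence a \nptrivial{} \npnormepi{}, correctly establishes $m\in\TKer$ -- and that is where the existence proof should stop. Your orthogonality argument likewise goes through verbatim with $\TKer$ in place of $\NMono$, replacing the appeal to \zcref{pb-nmono}~(\zcref*{normmono-is-tker}) by \zcref{tker-charact} (condition (v)) to deduce that $\comp{k}{u}$ is \nptrivial{}. For stability you would then need pullback-stability of $\TKer$ rather than of $\NMono$; this follows from the characterisation of $\TKer$-maps as those fitting into a pullback square over the coreflection counits (\zcref{monocoref,tker-charact}) together with pasting of pullbacks, while prenormality handles the $\NEpi$ part as you describe.
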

\begin{proposition}
\label{ses-pb-po}
Let \(
\begin{tikzcd}[cramped, column sep =2em]
    A\arrow[r, "f",normmono]&B\arrow[r, "g",normepi]&C
\end{tikzcd}
\)
be a \npexact{} sequence in $\cat C$. Then the composite $\comp fg$ factors through an object $Z\in\tobj$, i.e.\ there exist morphisms $\unin\colon A\to Z$ and $\cunn\colon Z\to C$ such that the following diagram commutes. 
\begin{equation}
\zcsetup{reftype=diagram}
\begin{tikzcd}
\label{exact-sequence-square}
    A\arrow[r, "f"]\arrow[d, "\unin"']& B\arrow[d, "g"]
    \\
    Z\arrow[r, "\cunn"'] & C
\end{tikzcd}
\end{equation}
For any such factorisation, the following conditions are equivalent:
\begin{enumerate*}[(a)]
\zcsetup{reftype=condition}
    \item\label{ex-a}\zcref{exact-sequence-square} is both a pullback and a pushout; 
    \item \label{ex-b}$(Z,\cunn)$ is a $\tobj$-coreflection of $C$; 
    \item \label{ex-c}$(Z,\unin)$ is a $\tobj$-reflection of $A$. 
\end{enumerate*}
\end{proposition}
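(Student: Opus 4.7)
The plan is to introduce the canonical $(\NEpi,\NMono)$-factorisation of $g\circ f$ from \zcref{fact-sys} and to compare the given factorisation against it. Existence of $Z$ is immediate, since $gf$ is $\tobj$-trivial by definition of the $\tobj$-kernel $(A,f)$. Writing $gf = m\circ e$ with $e\in\NEpi$ and $m\in\NMono$ meeting at an image object $I$, the crucial observation is that $I\in\tobj$: \zcref{pb-nmono}(\zcref*{normmono-is-tker}) applied to $m\circ e$ forces $e$ to be $\tobj$-trivial, and then the dual of \zcref{tker-charact} forces $I\in\tobj$ and identifies $(I,e)$ as the $\tobj$-reflection of $A$; dually, since $m$ factors through $I\in\tobj$, \zcref{tker-charact} identifies $(I,m)$ as the $\tobj$-coreflection of $C$.

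Given the factorisation $gf=\cunn\circ\unin$ with $Z\in\tobj$, the universal properties of this reflection and coreflection produce unique maps $\alpha\colon Z\to I$ with $m\alpha=\cunn$ and $\beta\colon I\to Z$ with $\beta e=\unin$. Cancelling in $me=gf=\cunn\unin=m(\alpha\beta)e$ (using that $m$ is mono and $e$ is epi) yields $\alpha\beta=\id_I$. By the uniqueness of (co)reflections up to unique isomorphism, condition (b) amounts to $\alpha$ being an isomorphism and (c) to $\beta$ being an isomorphism; the relation $\alpha\beta=\id_I$ then makes (b) and (c) equivalent, since a monic split epi (or an epic split mono) is invertible.

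Next I would verify that the canonical square with sides $f,e,g,m$ is both a pullback and a pushout. The pullback property follows from \zcref{pb-nmono}(\zcref*{specific-pb-mono}) applied to $m$, viewed as the $\tobj$-kernel of $\id_C$: its pullback along $g$ is the $\tobj$-kernel of $g$, namely $(A,f)$. The pushout property is a direct check, using that $g$ is the $\tobj$-cokernel of $f$ to produce the induced map $C\to Y$ from pushout data, and that $e$ is epi to verify compatibility with the $I$-leg. Consequently (b)$\Rightarrow$(a) is immediate: if $\alpha$ is iso, the given square is isomorphic, as a commutative square, to the canonical one.

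The main obstacle is the converse (a)$\Rightarrow$(b). The idea is to paste the given square horizontally as the canonical pullback square on the right composed with a smaller square on the left having $\id_A$ on top, $\unin$ on the left, $e$ on the right, and $\alpha$ on the bottom. By the pullback pasting lemma the pullback hypothesis reduces to this small square being a pullback, from which one extracts that $\alpha$ is a monomorphism; combined with $\alpha\beta=\id_I$, this upgrades $\alpha$ to an isomorphism and yields (b). The delicate point, in the semi-prenormal setting, is extracting the cancellation property of $\alpha$ from the abstract pullback statement, which requires careful exploitation of the pullback-stability of $\tobj$-normal epis along $\tobj$-normal monos provided by \zcref{wnorm-3}; this is where one has to make sure every auxiliary pullback used stays within the class where stability is available. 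A dual argument using the pushout hypothesis alternatively produces $\beta$ epi, and hence iso, closing the equivalence.
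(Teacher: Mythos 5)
The paper does not actually prove this statement: it is one of the properties of \wnormal{} categories recorded in \zcref{sec-context} and deferred to \cite{PN}, so there is no internal proof to compare against; I therefore assess your argument on its own terms, and it has two genuine gaps.

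First, your construction of the comparison object $I$ rests on a misidentification of the factorisation system. \zcref{fact-sys} provides the factorisation $(\NEpi,\TKer)$ -- \npnormepi{}s followed by maps with \nptrivial{} \npkernel{} -- not $(\NEpi,\NMono)$; indeed $(\NEpi,\NMono)$ is not a factorisation system in general, since not every map factors as a \npnormepi{} followed by a monomorphism. This matters because your argument uses $m$ twice in ways that require it to be a \npnormmono{}: once to cancel $m$ on the left in $me=m(\alpha\beta)e$, and once to deduce from ``$m$ factors through $I\in\tobj$'' that $(I,m)$ is a $\tobj$-coreflection of $C$ via \zcref{tker-charact}. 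For a mere $\TKer$-map with domain in $\tobj$ neither step is available: any map out of an object of $\tobj$ has \nptrivial{} \npkernel{} automatically, so the hypothesis carries no information. The repair is to build $I$ the other way around: since $\comp fg$ is \nptrivial{} it factors through the coreflection $\cuni C\colon\corefl C\nmonoto C$, which by \zcref{monocoref} is the \npkernel{} of $\id C$; pulling $\cuni C$ back along $g$ recovers $(A,f)$ (again \zcref{monocoref}), so the induced map $A\to\corefl C$ is the pullback of the \npnormepi{} $g$ along a \npnormmono{} and hence a \npnormepi{} by \zcref{wnorm-3}; being \nptrivial{}, it is a $\tobj$-reflection of $A$ by the dual of \zcref{tker-charact}. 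Only after this does your $\alpha\beta=\id_I$ bookkeeping, and hence the equivalence of (b) and (c), go through.

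Second, and more seriously, the implication (a)$\Rightarrow$(b) is not proved. Pasting pullbacks does show that the square with $\id_A$ on top, $\unin$ and $e$ on the sides and $\alpha$ on the bottom is a pullback, but this only yields the cancellation ``$\alpha a= e b\implies a=\unin b$'' for cones $(a,b)$ whose common composite already factors through $e$; it does not make $\alpha$ a monomorphism unless one can lift arbitrary maps into $I$ along $e$, which nothing guarantees. Your proposed dual route is also blocked: the pasting lemma for pushouts lets you deduce that the outer or the right-hand square is a pushout, never the left-hand one, so ``outer pushout plus canonical pushout'' does not give that the small square is a pushout, let alone that $\beta$ is epi. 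Since what one must show is precisely that the split epimorphism $\alpha\colon Z\to\corefl C$ (equivalently, the idempotent $\beta\alpha$ on $Z$) is invertible, and $\alpha$ is a map between objects of $\tobj$ along which no stability axiom of \zcref{wnorm} applies, this is exactly the point where a real argument combining the pullback and pushout hypotheses is needed; flagging it as ``delicate'' and gesturing at \zcref{wnorm-3} does not close it.
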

\begin{proposition}
\label{ses-morphism}
    In $\cat C$, consider the following commutative diagram with \npexact{} rows.
    \begin{equation*}
    \begin{tikzcd}
        A'\arrow[r,"{f'}",normmono]\arrow[d,"a"']&B'\arrow[r,"{g'}",normepi]\arrow[d,"b"']&C'\arrow[d,"c"']
        \\
        A\arrow[r, "f"',
        normmono]& B\arrow[r,"g"',normepi]& C
    \end{tikzcd}
    \end{equation*}
    The left-hand square is a pullback if and only if $c\in\TKer$.
\end{proposition}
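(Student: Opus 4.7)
The plan is to use \zcref{pb-nmono} (\zcref*{specific-pb-mono}) as the central tool: it allows us to recognise the $\tobj$-kernel of the middle composite $b\cdot g = g'\cdot c$ in two different ways, and the statement then follows from the uniqueness of $\tobj$-kernels.

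For the direction ($\Rightarrow$), assume the left-hand square is a pullback. Let $k\colon K\to C'$ be the $\tobj$-kernel of $c$, which exists by \zcref{monocoref} together with \zcref{wnorm} (\zcref*{wnorm-1}). Form the pullback of $k$ along $g'$:
\[
\begin{tikzcd}
P\rar["p"]\dar["q"'] & B'\dar["g'",normepi]
\\
K\rar["k"',normmono] & C'
\end{tikzcd}
\]
By \zcref{pb-nmono} (\zcref*{specific-pb-mono}), $(P,p)$ is the $\tobj$-kernel of $g'\cdot c = b\cdot g$, and by \zcref{wnorm} (\zcref*{wnorm-3}), $q$ is a $\tobj$-normal epimorphism. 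Applying \zcref{pb-nmono} (\zcref*{specific-pb-mono}) to the left-hand pullback of the original diagram shows that $(A',f')$ is also a $\tobj$-kernel of $b\cdot g$, so there exists an isomorphism $\psi\colon A'\to P$ with $\psi\cdot p = f'$. Then $\psi\cdot q\cdot k = f'\cdot g'$ is $\tobj$-trivial, so by \zcref{pb-nmono} (\zcref*{normmono-is-tker}) applied to the normal mono $k$, the map $\psi\cdot q$ and hence $q$ itself is $\tobj$-trivial. A $\tobj$-normal epimorphism that is $\tobj$-trivial has codomain in $\tobj$ (dual of \zcref{tker-charact}), so $K\in\tobj$ and $c\in\TKer$.

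For the converse, suppose $c\in\TKer$, and form the pullback $f''\colon Q\to B'$ of $f$ along $b$ (which exists by \zcref{wnorm} (\zcref*{wnorm-1}) since $f\in\NMono$). By \zcref{pb-nmono} (\zcref*{specific-pb-mono}), $(Q,f'')$ is the $\tobj$-kernel of $b\cdot g$. The composite $f'\cdot b\cdot g = f'\cdot g'\cdot c$ is $\tobj$-trivial, so $f'$ factors as $\phi\cdot f''$ for a unique $\phi\colon A'\to Q$. Conversely, $f''\cdot g'\cdot c$ is $\tobj$-trivial by definition of $f''$, and since $c$ has $\tobj$-trivial kernel, the characterisation in \zcref{tker-charact} forces $f''\cdot g'$ itself to be $\tobj$-trivial; hence $f''$ factors through the $\tobj$-kernel $f'$ of $g'$ as $f'' = \psi\cdot f'$. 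Monicity of $f'$ and $f''$ shows that $\phi$ and $\psi$ are mutually inverse, so $A'\cong Q$ and the left-hand square is a pullback. The subtler direction is ($\Rightarrow$), where one needs the auxiliary map $q$ obtained by pullback to inherit $\tobj$-normal-epi status from $g'$: this is exactly what \zcref{wnorm} (\zcref*{wnorm-3}) buys us, and is the key place where the semi-prenormality of $\cat C$ is used.
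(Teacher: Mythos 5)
Your proof is correct. Note that the paper itself does not prove \zcref{ses-morphism}: it is recorded in \zcref{sec-context} among the properties of \wnormal{} categories imported from \cite{PN}, so there is no in-paper argument to compare against. Your derivation is nonetheless valid and self-contained, using only tools stated earlier in the paper: both directions correctly exploit \zcref{pb-nmono} (\zcref*{specific-pb-mono}) to compute the \npkernel{} of $\comp{b}{g}=\comp{g'}{c}$ in two different ways, and you rightly isolate \zcref{wnorm} (\zcref*{wnorm-3}) as the one place where semi-prenormality beyond the mere existence of pullbacks along \npnormmono{}s is needed -- namely to make the auxiliary map $q$ a \npnormepi{}, so that its \nptriviality{} forces $K\in\tobj$ via the dual of \zcref{tker-charact} (which does hold here, by the usual retract argument combined with mono-coreflectivity of $\tobj$). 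The only step left implicit is that, in the converse direction, the isomorphism $A'\cong Q$ is also compatible with the projections to $A$, so that the left-hand square is genuinely a pullback; this follows at once from $f$ being a monomorphism.
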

\begin{proposition}
    \label{Noether}
    In $\cat C$, for any commutative diagram of the form shown below on the left, where $m$, $n$ (and therefore $j$) are \npnormmono{}s, we can construct the corresponding diagram on the right, where $p$, $q$ and $r$ are the \npcokernel{}s of $m$, $n$ and $j$ respectively, and $\phi$ and $\psi$ are the induced maps making the diagram commutative.
\begin{equation}
\zcsetup{reftype=diagram}
\label{Noet-iso-diagram}
\hskip\textwidth minus \textwidth
\begin{tikzcd}[baseline=(current bounding box.center)]
    & N\arrow[d, normmono, "n"]\arrow[dl, bend right, normmono, "j"']
    \\
    M\arrow[r, normmono, "m"'] & A
\end{tikzcd}
\hskip\textwidth minus \textwidth
\hskip\textwidth minus \textwidth
\begin{tikzcd}[baseline=(current bounding box.center)]
N\arrow[r,equal]\arrow[d, "j"',normmono] & N\arrow[d, normmono, "n"]
\\
M\arrow[r, normmono, "m"']\arrow[d,normepi, "r"] & A\arrow[d,normepi, "q"]\arrow[r,normepi, "p"] & A/M\arrow[d,equal]
\\
M/N\arrow[r, "\phi"'] & A/N\arrow[r,"\psi"'] & A/M
\end{tikzcd}
\hskip\textwidth minus \textwidth
\end{equation}
The diagram on the right in \ref{Noet-iso-diagram},  has \npexact{} rows, i.e.\ $M/N$ is a $\tobj$-normal subobject of  $A/N$ and $\frac{A/N}{M/N}\iso A/M$.
\end{proposition}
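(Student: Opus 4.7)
The plan is to establish that the bottom row $M/N \to A/N \to A/M$ is \npexact{}, by verifying in turn: (a) $\psi \circ \phi$ is \nptrivial{}; (b) $\psi$ is the \npcokernel{} of $\phi$; and (c) $\phi$ is the \npkernel{} of $\psi$. The maps $\phi$ and $\psi$ arise from the universal properties of $r$ and $q$ applied to the \nptrivial{} composites $q \circ m \circ j = q \circ n$ and $p \circ n = p \circ m \circ j$.

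For (a), since $p \circ m$ is \nptrivial{}, write $p \circ m = v \circ u$ with $u\colon M \to Z$ and $Z \in \tobj$. The composite $u \circ j$ lands in $Z$, hence is \nptrivial{}, and the universal property of $r$ yields $u = u' \circ r$ for some $u'\colon M/N \to Z$. Cancelling the epi $r$ in $\psi \circ \phi \circ r = p \circ m = v \circ u' \circ r$ gives $\psi \circ \phi = v \circ u'$, factoring through $Z$. For (b): any $y\colon A/N \to Y$ with $y \circ \phi$ \nptrivial{} satisfies $y \circ q \circ m = y \circ \phi \circ r$ \nptrivial{}, so the universal property of $p$ provides a unique $y'$ with $y' \circ p = y \circ q$; since $\psi \circ q = p$ and $q$ is epic, $y' \circ \psi = y$, and uniqueness of $y'$ follows from $\psi$ being epic (as a cokernel). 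In particular, $\psi$ is a \npnormepi{}.

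Part (c) is the main obstacle. I first apply \zcref{ses-morphism} to the commutative diagram whose rows are the \npexact{} sequences $N \xrightarrow{j} M \xrightarrow{r} M/N$ and $N \xrightarrow{n} A \xrightarrow{q} A/N$, with vertical maps $\id_N$, $m$, $\phi$: the left square is a pullback (because $m$ is monic with $m \circ j = n$), so $\phi \in \TKer$. To compute the \npkernel{} of $\psi$ concretely, by \zcref{monocoref} the coreflection $\cuni{A/M}\colon \corefl{A/M} \nmonoto A/M$ is the \npkernel{} of $\id_{A/M}$; its pullback along $\psi$ exists by condition \zcref*{wnorm-1} of \zcref{wnorm} and yields, via \zcref{pb-nmono}(\zcref*{specific-pb-mono}), a \npnormmono{} $\cuni'\colon P \nmonoto A/N$ that is the \npkernel{} of $\psi$. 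The triviality of $\psi \circ \phi$ produces a unique $h\colon M/N \to P$ with $\cuni' \circ h = \phi$, and $h$ itself lies in $\TKer$ by the characterisation in \zcref{tker-charact} (since $\phi$ and $\cuni'$ do).

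It remains to prove $h$ is an isomorphism. Setting $\widetilde h := h \circ r$, the square
\[
\begin{tikzcd}
M \arrow[r, "\widetilde h"] \arrow[d, "m"', normmono] & P \arrow[d, "\cuni'", normmono] \\
A \arrow[r, "q"', normepi] & A/N
\end{tikzcd}
\]
commutes (both composites equal $q \circ m = \phi \circ r$) and is a pullback: a competitor $(u, v)$ with $\cuni' \circ u = q \circ v$ satisfies $p \circ v = \psi \circ \cuni' \circ u$, which is \nptrivial{}, so $v$ factors uniquely through the \npkernel{} $m$ of $p$, and compatibility with $\widetilde h$ follows from $\cuni'$ being monic. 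Then \zcref*{wnorm-3} of \zcref{wnorm} forces $\widetilde h$ to be a \npnormepi{}. The two factorisations $\widetilde h = h \circ r$ and $\widetilde h = \id_P \circ \widetilde h$ in the factorisation system of \zcref{fact-sys} must then agree up to a unique isomorphism of middle objects, forcing $h$ to be an isomorphism. Hence $(M/N, \phi)$ is canonically identified with the \npkernel{} $(P, \cuni')$ of $\psi$, establishing \npexact{}ness of the bottom row.
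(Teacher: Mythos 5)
The paper itself records \zcref{Noether} without proof, deferring to \cite{PN} for the background on \wnormal{} categories, so there is no in-paper argument to compare yours against; I can only assess your proof on its own terms. It is correct and complete. Your decomposition into (a) triviality of $\psi\circ\phi$, (b) the \npcokernel{} universal property of $\psi$ via the universal property of $p$ and the fact that $q$ is epic, and (c) identification of $\phi$ with the \npkernel{} of $\psi$, is exactly what is needed, and each step checks out: the left square of the morphism of \npexact{} sequences $(\,\mathrm{id}_N,m,\phi)$ is indeed a pullback because $m$ is monic, so \zcref{ses-morphism} gives $\phi\in\TKer$; the pullback of $\cuni{A/M}$ along $\psi$ exists and computes the \npkernel{} of $\psi$ by \zcref{monocoref} and \zcref{pb-nmono}; your square comparing $m$ and $\cuni'$ over $q$ is a genuine pullback by the universal property of $m$ as \npkernel{} of $p$; and the axiom that \npnormepi{}s pull back along \npnormmono{}s makes $\widetilde h$ a \npnormepi{}, after which uniqueness of $(\NEpi,\TKer)$-factorisations of \zcref{fact-sys} forces $h$ to be invertible.

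One point deserves a sentence more of justification: the claim that $h\in\TKer$ via \zcref{tker-charact} presupposes that $h$ \emph{has} a \npkernel{}, since $\TKer$ is defined as the class of maps whose \npkernel{} exists and is \nptrivial{}, and the equivalences of \zcref{tker-charact} are stated under that existence hypothesis. This is harmless here: since $\tobj$ is mono-coreflective and $\cat C$ admits pullbacks along \npnormmono{}s, every morphism of $\cat C$ acquires a \npkernel{} by pulling back the coreflection counit (\zcref{monocoref}), so the cancellation property you verify ($h\circ x$ \nptrivial{} implies $\phi\circ x$ \nptrivial{} implies $x$ \nptrivial{}) does place $h$ in $\TKer$. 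Alternatively, you could invoke the standard right-cancellation property of the right class of an orthogonal factorisation system applied to $\cuni'\circ h=\phi$ with $\cuni',\phi\in\TKer$. Either way the gap is purely expository.
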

\begin{proposition}
    \label{pb-is-po}
Assume that $\cat C$ is \normal{} and that all \npnormepi{}s in $\cat C$ are regular epimorphisms. Then any pullback square of \npnormepi{}s is a pushout.
\end{proposition}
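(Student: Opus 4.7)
Name the pullback square
\[
\begin{tikzcd}
P \rar["p_2",normepi]\dar["p_1"',normepi] & B\dar["g",normepi]\\
A \rar["f"',normepi] & C\rlap{,}
\end{tikzcd}
\]
so that $f,g$ are $\tobj$-normal epis (and, by hypothesis, regular epis), and $p_1,p_2$ are $\tobj$-normal epis by the pullback-stability of $\NEpi$ granted by $\tobj$-prenormality. The strategy is to exploit the fact that $g$, being a regular epi, is the coequaliser of its kernel pair $g_1,g_2\colon R\rightrightarrows B$, and to test morphisms out of $C$ using this universal property.

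Suppose we are given a cocone $\alpha\colon A\to X$, $\beta\colon B\to X$ with $\mcomp{p_1,\alpha}=\mcomp{p_2,\beta}$. The first key step is to form the iterated pullback $P\times_A P$ with projections $\pi_1,\pi_2\colon P\times_A P\to P$; by repeated pullback pasting one identifies $P\times_A P$ with $A\times_C B\times_C B$, and in particular the pair $(\mcomp{\pi_1,p_2},\mcomp{\pi_2,p_2})$ induces a canonical map $e\colon P\times_A P\to R$ which is a pullback of $f\colon A\to C$ along $\mcomp{g_1,g}=\mcomp{g_2,g}\colon R\to C$. By $\tobj$-prenormality, $e$ is a $\tobj$-normal epi, hence a regular epi, hence in particular epic. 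This is the main point where both hypotheses -- pullback-stability of $\NEpi$ and coincidence of $\NEpi$ with regular epis -- intervene.

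The second step is to verify that $\mcomp{g_1,\beta}=\mcomp{g_2,\beta}\colon R\to X$. Since $e$ is epic it suffices to check equality after precomposition with $e$; and indeed
\[
\mcomp{e,g_i,\beta}=\mcomp{\pi_i,p_2,\beta}=\mcomp{\pi_i,p_1,\alpha}\qquad (i=1,2),
\]
and $\mcomp{\pi_1,p_1}=\mcomp{\pi_2,p_1}$ by the very definition of $P\times_A P$ as pullback of $p_1$ along itself. Consequently $\beta$ coequalises $(g_1,g_2)$, so there is a unique $h\colon C\to X$ with $\mcomp{g,h}=\beta$. The identity $\mcomp{f,h}=\alpha$ then follows because $\mcomp{p_1,f,h}=\mcomp{p_2,g,h}=\mcomp{p_2,\beta}=\mcomp{p_1,\alpha}$ and $p_1$ is epic; uniqueness of $h$ is immediate from $g$ being epic. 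This shows the square is a pushout.

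The only non-routine ingredient is the construction and normal-epic-ness of $e\colon P\times_A P\to R$: without pullback-stability of $\NEpi$ along arbitrary maps one cannot conclude $e$ is epic, and without the coincidence $\NEpi=\RegEpi$ one cannot use $g$ as a coequaliser in the first place. Everything else is routine diagram chasing.
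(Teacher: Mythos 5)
Your proof is correct and is the expected argument: the paper does not prove this proposition in the text (it records it among the properties imported from \cite{PN}), and your adaptation of the classical regular-category argument --- identifying the kernel pair of $p_1$ with a pullback of $f$ over the kernel pair of $g$, so that the comparison map $e$ is a pullback-stable \npnormepi{}, hence regular, hence epic, after which $\beta$ coequalises the kernel pair of $g$ --- is precisely the standard route. The only nitpick is terminological: the hypothesis asks that every \npnormepi{} be a regular epimorphism, not that the two classes coincide, and your argument in fact uses only that inclusion.
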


\section{Characterisations of torsion and torsion-free subcategories}
\label{sec-T-and-TF}
In this section, we provide some characterisations of \ztn{} and \ztf{} subcategories of the \wnormal{} category $\cat C$.

In the context of pointed categories, some of the most well-known characterisations of torsion-free subcategories are given in terms of particular pullback-stability conditions on the units of a normal-epi-reflective subcategory. Such conditions appear, in different contexts, in various forms and degrees of strength, such as semi-left-exactness, unit stability, reflection stability, normality of the adjoint (see for example \cite{BOURN,THOLEN,JANELIDZE07}). In the non-pointed setting, some of these characterisations fail to hold in their general form, and certain formulations that are equivalent in the pointed case cease to be equivalent. This breakdown is largely due to the fact that, outside the pointed context, 
the kernel of a pullback of a given morphism is not, in general, isomorphic to the kernel of the original morphism (see \zcref{kernel-of-parall,remark-pb}). Notwithstanding these limitations, we now provide two characterisations of \ztf{} subcategories that hold in the context of \wnormal{} categories, possibly under additional hypotheses. Pointed counterparts of these results in the homological context can be found in \cite[Lemma~4.10 and Theorem~4.12]{BOURN}.
See also \zcref{tt-not-sle} for an example illustrating the failure of these characterisations in more general formulations.
{
\begin{proposition}
\label{TF-stable}
 Let $\TFC$ be a \zrefl{} subcategory of $\cat C$, with $\TFP{}\colon \cat C\to\TFC$ the reflector and $\TFU{}$ the unit of the reflection. Then the following are equivalent.
 \begin{enumerate}[i)]
     \item \label{TF-stable-1} $\TFC$ is \ztf{}.
     \item\label{TF-stable-2} The units of the reflection $\TFP{}\colon \cat C\to\TFC$ are stable under pullbacks along \npnormmono{}s, meaning that for any pullback square
     \begin{equation}
     \label{square-stable-units}
     \begin{tikzcd}
     P\arrow[d, "p"']\arrow[r, "f'"] & Y\arrow[d, "\TFU Y",normepi]
     \\
     X\arrow[r,normmono, "f"'] &\TFP Y
     \end{tikzcd}
     \end{equation}
     with $f$ a \npnormmono{},  $(X,p)$ is an $\TFC$-reflection of $P$. 
 \end{enumerate}
\end{proposition}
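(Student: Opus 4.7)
My plan is to prove the two implications separately, relying on the machinery of \zcref{H-K}, \zcref{TF-is-ref}, and \zcref{kernel-of-parall}.

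For \zcref{TF-stable-1}$\implies$\zcref{TF-stable-2}, the strategy is to exhibit $\TNP Y\nmonoto P\nepito X$ as a $(\TNC,\TFC)$-presentation of $P$, so that uniqueness of presentations forces $p\iso \TFU P$. First, \zcref{wnorm-3} gives that $p$ is a $\tobj$-normal epi, and \zcref{other-TF-properties} yields $X\in\TFC$ since $X$ is a $\tobj$-normal subobject of $\TFP Y\in\TFC$. To identify the $\tobj$-kernel of $p$ with $\TNP Y$, I would apply \zcref{kernel-of-parall} with $f$ playing the role of the right-hand $\tobj$-normal mono; a short auxiliary computation (using \zcref{pb-nmono} and \zcref{wnorm-1}, together with the observation that $\TFU Y\comp\TNU Y$ is $\tobj$-trivial and therefore factors through the $\tobj$-normal mono $f$) both supplies the existence of this kernel and identifies it with $\TNP Y$. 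Since $p$ is a $\tobj$-normal epi, it is automatically the $\tobj$-cokernel of its $\tobj$-kernel, completing the presentation.

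For the converse \zcref{TF-stable-2}$\implies$\zcref{TF-stable-1}, by \zcref{H-K} it suffices to show $\cat K\subseteq\cat H$. Take $X\in\cat K$, realised as the $\tobj$-kernel of some $\TFU Y\colon Y\nepito\TFP Y$. The pivotal idea is to pull $\TFU Y$ back along the coreflection $\cuni{\TFP Y}\colon\corefl{\TFP Y}\nmonoto\TFP Y$---a $\tobj$-normal mono by \zcref{monocoref}---and recognise the resulting pullback as $X$ itself. Indeed, a map $U\to P$ corresponds to a map $u\colon U\to Y$ together with a lift of $\TFU Y\comp u$ through $\cuni{\TFP Y}$; by mono-coreflectivity of $\tobj$, such a lift exists precisely when $\TFU Y\comp u$ is $\tobj$-trivial, which is exactly the defining condition for $u$ to factor through the $\tobj$-kernel $X$ of $\TFU Y$. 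Once the identification $P\iso X$ is in place, hypothesis \zcref{TF-stable-2} yields that the induced map $X\to\corefl{\TFP Y}$ is the $\TFC$-reflection of $X$, so $\TFP X\iso\corefl{\TFP Y}\in\tobj$ and $X\in\cat H$.

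The main obstacle lies in the converse direction, in the canonical identification $P\iso X$: though conceptually transparent, the verification relies on a careful interplay between the universal properties of pullbacks, coreflections, and $\tobj$-kernels, and it is this step that converts the pullback-stability hypothesis into the desired triviality of $\TFP X$. The forward direction is largely routine once the pullback square is correctly oriented for \zcref{kernel-of-parall}, although the existence of the $\tobj$-kernel of $p$ still requires a brief preliminary argument.
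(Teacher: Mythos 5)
Your proposal is correct and follows essentially the same route as the paper's proof: the forward direction builds the exact sequence $\TNP Y\nmonoto P\nepito X$ as a presentation of $P$ (using \zcref{wnorm-3}, \zcref{kernel-of-parall} and \zcref{other-TF-properties}, then invoking uniqueness of presentations), and the converse uses \zcref{H-K} together with the pullback description of \zcref{monocoref} to realise each kernel of a unit as a pullback of that unit along the coreflection $\cuni{\TFP Y}$, so that the hypothesis forces $\TFP X\iso\corefl{\TFP Y}\in\tobj$. The only cosmetic difference is that you re-derive the universal-property argument of \zcref{monocoref} by hand where the paper simply cites it.
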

\begin{proof}
Suppose that \zcref{TF-stable-1} holds, and let $\cat T$ be the subcategory of $\cat C$ such that $(\cat T,\TFC)$ is a \ztt.
Consider a pullback square of the form \ref{square-stable-units} and let  
\[
\begin{tikzcd}
    T_Y\arrow[r,normmono] & Y\arrow[r, normepi, "\TFU Y"] &\TFP Y
\end{tikzcd}
\]
be a \ttt{}-presentation of $Y$, with $T_Y\in \cat T$. As $p$ is obtained as the pullback along a \npnormmono{} of $\TFU Y$, we have that:
\begin{itemize}[-]
    \item $p$ is a \npnormepi{};
    \item by \zcref{kernel-of-parall}, there exists a map $T_Y\nmonoto P$ making $T_Y$  the \npkernel{} of $p$.
\end{itemize}
Thus, we obtain a \npexact{} sequence 
\begin{equation}
\label{t-tf-sequence-pb}
\begin{tikzcd}
   T_Y\arrow[r, normmono] & P\arrow[r,normepi, "p"] &X.
\end{tikzcd}
\end{equation}
Now, $f$ is a \npnormmono{} from $X$ to $\TFP Y\in\TFC$, and therefore $X\in\TFC$ by \zcref{other-TF-properties}. We conclude that the above sequence \ref{t-tf-sequence-pb} is a $(\cat T,\TFC)$-presentation of $P$, and hence $(X,p)$ is an $\TFC$-reflection of $P$ by \zcref{TF-is-ref}.

Suppose now that \zcref{TF-stable-2} holds. By \zcref{H-K}, we just need to prove that if $(X,k)$ is the \npkernel{} of $\TFU Y$ for some $Y$, then $\TFP X\in\tobj$. According to \zcref{monocoref}, there exists a map $h\colon X\to \corefl{\TFP X}$ such that the following square is a pullback. 
\[
\begin{tikzcd}   X\arrow[r,normmono, "k"]\arrow[d, "h"']& Y\arrow[d, "\TFU Y", normepi]
   \\
   \corefl{\TFP Y}\arrow[r,"\cuni {\TFP Y}"',normmono] &\TFP Y
\end{tikzcd}
\]
By \ref{TF-stable-2}, $(\corefl{\TFP Y},h)$ is an $\TFC$-reflection of $X$, and therefore $\TFP X\iso \corefl{\TFP Y}\in\tobj$.
 \end{proof}
}
{
\begin{proposition}
\label{natsq-pb}
    Suppose $\cat C$ is \normal{} (not just \wnormal{}) and that every \npnormepi{} in $\cat C$ is also a regular epimorphism.

 Let $\TFC$ be a \zrefl{} subcategory of $\cat C$,  with $\TFP{}\colon \cat C\to\TFC$ the reflector and $\TFU{}$ the unit of the reflection. Then the following are equivalent.
 \begin{enumerate}[i)]
     \item \label{natsq-pb-1} $\TFC$ is \ztf.
     \item \label{natsq-pb-2} For every object $X$ in $\cat C$, if $(K,k)$ is the \npkernel{} of $\TFU X$, then the following naturality square is a pullback.
     \begin{equation}
     \label{natsq-pb-d}
     \begin{tikzcd}
         K\arrow[r, "k",normmono]\arrow[d, "\TFU K"',normepi]& X\arrow[d, "\TFU X", normepi]
         \\
         \TFP K\arrow[r, "\TFP k"'] &\TFP X
     \end{tikzcd}
     \end{equation}
 \end{enumerate}
\end{proposition}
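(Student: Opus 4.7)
The plan is to compare the naturality square with the canonical pullback square arising from \zcref{ses-pb-po} applied to the $\tobj$-exact sequence $K \nmonoto X \nepito \TFP X$. That result supplies a factorisation $\mcomp{k, \TFU X} = \mcomp{\unin, \cunn}$ through the coreflection $\corefl{\TFP X}\in\tobj$ and asserts that the resulting square
\[
\begin{tikzcd}
K \rar["k",normmono]\dar["\unin"',normepi]&X\dar["\TFU X",normepi]\\
\corefl{\TFP X}\rar["\cunn"',normmono]&\TFP X
\end{tikzcd}
\]
is both a pullback and a pushout; here $\unin$ is itself a normal epi, as the pullback of $\TFU X$ along $\cunn$, by \zcref{wnorm-3}. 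Under the extra hypothesis that normal epis are regular epis, \zcref{fact-sys} sharpens to a (normal epi, mono)-factorisation system, and orthogonality yields a unique diagonal $d\colon \TFP K \to \corefl{\TFP X}$ with $\mcomp{\TFU K, d} = \unin$ and $\mcomp{d, \cunn} = \TFP k$. The whole proof reduces to showing that $d$ is an isomorphism.

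For \zcref{natsq-pb-1}$\Rightarrow$\zcref{natsq-pb-2}, assume $\TFC$ is $\tobj$-torsion-free. Then \zcref{H-K} gives $\TFP K \in \tobj$. Since in any $\tobj$-torsion theory trivial objects $Z$ admit only the trivial presentation $Z \to Z \to Z$, one has $\tobj \subseteq \TFC$; thus $\TFU K$ is simultaneously a $\tobj$-reflection of $K$, and uniqueness of reflections produces a canonical isomorphism $\TFP K \cong \corefl{\TFP X}$ identifying $\TFU K$ with $\unin$ and $\TFP k$ with $\cunn$. The naturality square thereby coincides with the pullback square above.

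For \zcref{natsq-pb-2}$\Rightarrow$\zcref{natsq-pb-1}, by \zcref{H-K} it suffices to show $\TFP K \in \tobj$ whenever $K$ is the kernel of a unit component. Factoring $d = d_e \cdot d_m$ in the sharpened factorisation system gives $\unin = \mcomp{\TFU K, d_e, d_m}$, a (normal epi, mono)-factorisation of the normal epi $\unin$; uniqueness forces $d_m$ to be iso, so $d$ is itself a normal, hence regular, epi. For monicity, pullback pasting shows that the pullback of $\unin$ along $d$ equals the pullback of $\TFU X$ along $\TFP k = \mcomp{d, \cunn}$; by the naturality hypothesis this is $K$, and tracing the pasting the two projections become $\id_K$ to $K$ and $\TFU K$ to $\TFP K$. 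Given $f, g\colon T \to \TFP K$ with $\mcomp{f, d} = \mcomp{g, d}$, pull $T$ back along $\unin$ via this common composite to obtain a regular epi $T' \nepito T$ and a lift $h\colon T' \to K$; the apex-$K$ pullback description then forces $\mcomp{T' \to T, f} = \mcomp{h, \TFU K} = \mcomp{T' \to T, g}$, whence $f = g$ after cancelling the epi. Thus $d$ is a monic regular epi, hence an isomorphism, and $\TFP K \cong \corefl{\TFP X} \in \tobj$.

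The principal difficulty lies in the monicity step of the backward direction; this is precisely where the strengthened hypotheses are indispensable. Normality of $\cat C$ gives the pullback-stability of regular epis used to produce the cover $T' \nepito T$; the coincidence of normal and regular epis both upgrades the factorisation system used to construct $d$ and yields the closure property that a monic regular epi is an isomorphism.
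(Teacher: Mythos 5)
Your argument is correct, and its overall skeleton matches the paper's: both directions hinge on comparing the naturality square with the pullback/pushout square supplied by \zcref{ses-pb-po}, and the converse reduces, via \zcref{H-K}, to showing that the comparison map $d\colon \TFP K\to\corefl{\TFP X}$ (the paper's $u$) is an isomorphism, with the ``epi'' half obtained in both cases from the uniqueness of $(\NEpi,\TKer)$-factorisations. Where you genuinely diverge is the final step: the paper concludes by observing that the square $\id K/\TFU K/\unin/d$ is a pullback of \npnormepi{}s and invokes \zcref{pb-is-po} to make it a pushout, whence $d$ is invertible; you instead prove monicity of $d$ by hand, pulling an arbitrary pair $f,g$ back along $\unin$ and cancelling the resulting \npnormepi{} cover, then use that a monic regular epimorphism is an isomorphism. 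Your route is more elementary in that it bypasses \zcref{pb-is-po}, though it uses the same strengthened hypotheses (prenormality for the pullback-stable cover, and $\NEpi\subseteq\mathrm{RegEpi}$ for the last step), so nothing is gained in generality. One small inaccuracy to fix: \zcref{fact-sys} does not ``sharpen to a (\npnormepi{}, mono)-factorisation system'' under the extra hypotheses --- the right class is $\TKer$, not the monomorphisms --- but this is harmless, since the orthogonality you need ($\TFU K\perp\cuni{\TFP X}$, because normal monomorphisms lie in $\TKer$) and the uniqueness-of-factorisation argument showing $d\in\NEpi$ both go through verbatim with $(\NEpi,\TKer)$.
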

\begin{proof}
    If $\TFC$ is \ztf{}, then, in any naturality square like \ref{natsq-pb-d}, $K$ is a torsion object. Therefore, the map $\TFU K$ is the \npcokernel{} of the isomorphism between $K$ and its torsion part. Since the \npcokernel{} of an isomorphism is a $\tobj$-reflection of its domain, we conclude that $(\TFP K,\TFU K)$ is a $\tobj$-reflection of $K$. It follows from \zcref{ses-pb-po} that \ref{natsq-pb-d} is a pullback.

   Conversely, suppose that \ref{natsq-pb-2} holds. Fix an object $X\in\cat C$ and consider the pullback square \ref{natsq-pb-d}. Clearly $\mcomp{k,\TFU X}=\mcomp{\TFU K, \TFP k}$ is \nptrivial{}, and since $\TFU K$ is a \npnormepi{}, it follows that $\TFP k$ is \nptrivial{}. By coreflectivity of $\tobj$, there exists a map $u\colon \TFP K\to \corefl{\TFP X}$ such that $\mcomp{u,\cuni{\TFP X}}=\TFP k$. We thus obtain the following commutative diagram.
    \[
    \begin{tikzcd}
        K\arrow[r, equal]\arrow[d,"\TFU K"',normepi] & K\arrow[r, "k", normmono] \arrow[ d]& X\arrow[d,normepi, "\TFU X"]
        \\
        \TFP K\arrow[r, "u"'] & \corefl{\TFP X}\arrow[r,"\cuni{\TFP X}"']&\TFP X
    \end{tikzcd}
    \]
    The right-hand side is a pullback, since $k$ is the \npkernel{} of $\TFU X$. It follows that the left-hand square is a pullback and that the central vertical map $\mcomp{\TFU K, u}$ is a \npnormepi{}. By the general properties of factorisation systems, since both $\mcomp{\TFU K,u}$ and $\TFU K$ are \npnormepi{}s, we deduce that $u$ is one as well. The left-hand square is therefore a pullback diagram whose sides are all \npnormepi{}s, and it is thus a pushout (\zcref{pb-is-po}). We conclude that $u$ is an isomorphism and so $\TFP K\iso\corefl{\TFP X}\in\tobj$. This proves that $\TFC$ is \ztf{} by \zcref{H-K}.
\end{proof}
}
We conclude this section with a characterisation of \ztn{} subcategories. Its pointed counterpart is found in \cite[Theorem~4.6.2]{THOLEN}.
{
\begin{proposition}
\label{tor-charact}
    Let $\TNC$ be a \zcor{} subcategory of $\cat C$. Then the following are equivalent.
    \begin{enumerate}[i)]
        \item \label{tn-char-1} $\TNC$ is a \ztn{} subcategory;
        \item \label{tn-char-2}$\TNC$ is closed under \zext{}s.
    \end{enumerate}
\end{proposition}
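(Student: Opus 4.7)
The direction (i) $\Rightarrow$ (ii) I plan to handle immediately: by the dual of item 3 of \zcref{other-TF-properties}, any \ztn{} subcategory is closed under \zext{}s.

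For the converse (ii) $\Rightarrow$ (i), my strategy is to construct the torsion-free companion explicitly as
\[
\TFC := \{X \in \cat C \mid \TNP X \in \tobj\}.
\]
The condition that morphisms from $\TNC$ to $\TFC$ are \nptrivial{} is essentially by definition, since $Y \in \TNC$ implies any $f\colon Y \to X$ factors through the counit $\TNU X$, which is \nptrivial{} precisely when $\TNP X \in \tobj$. For the existence of $(\TNC,\TFC)$-presentations, the dual of \zcref{H-K} reduces matters to showing that whenever $Q$ is the \npcokernel{} of a counit component $\TNU W \colon \TNP W \nmonoto W$, we have $\TNP Q \in \tobj$.

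To establish this, I plan to pull back the counit $\TNU Q \colon \TNP Q \nmonoto Q$ along the \npnormepi{} $q \colon W \nepito Q$, obtaining a pullback square with sides $m \colon P \nmonoto W$ (a \npnormmono{} by \zcref{pb-nmono}) and $n \colon P \nepito \TNP Q$ (a \npnormepi{} by \zcref{wnorm-3}). Applying \zcref{kernel-of-parall}, with $\TNU Q$ in the role of the mono $y$, identifies the \npkernel{} of $n$ as the counit $\TNU W$, producing a \npexact{} sequence $\TNP W \nmonoto P \nepito \TNP Q$ whose outer terms lie in $\TNC$. The closure hypothesis (ii) then forces $P \in \TNC$.

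The main obstacle I foresee is the final deduction $\TNP Q \in \tobj$. My plan is to use $P \in \TNC$ to factor the \npnormmono{} $m$ through $\TNU W$ via some $s \colon P \to \TNP W$, so that $\TNU W \circ s = m$. Letting $k \colon \TNP W \to P$ denote the kernel inclusion from the exact sequence, the relation $m \circ k = \TNU W$ together with monicity of $\TNU W$ will yield $s \circ k = \mathrm{id}_{\TNP W}$, and post-composing $k \circ s$ with the mono $m$ will yield $k \circ s = \mathrm{id}_P$. Hence $k$ is an isomorphism, so $n$ exhibits $\TNP Q$ as the \npcokernel{} of an iso. A short retract argument -- combined with the fact that any mono-coreflective subcategory of $\cat C$ is automatically closed under retracts (if $C$ sits as a retract of some $Z \in \tobj$, the coreflection $\corefl C \nmonoto C$ becomes both mono and split epi) -- should then give $\TNP Q \in \tobj$, completing the proof.
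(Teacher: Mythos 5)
Your proposal is correct and follows essentially the same route as the paper's proof: the reduction via the dual of \zcref{H-K}, the pullback of $q$ along $\TNU Q$, the identification of the kernel of the pulled-back \npnormepi{} via \zcref{kernel-of-parall}, and the appeal to closure under \zext{}s to obtain $P\in\TNC$ all coincide with the paper's argument. The only divergence is the final deduction: the paper shows that the composite of the pulled-back \npnormepi{} with $\TNU Q$ is \nptrivial{} by a naturality computation and then cancels the \npnormepi{}, whereas you use the universal property of the coreflection (applicable since $P\in\TNC$) to split the kernel inclusion $k$, conclude that $k$ is an isomorphism, and recognise $\TNP Q$ as the \npcokernel{} of an isomorphism --- at which point the dual of \zcref{tker-charact} gives $\TNP Q\in\tobj$ directly, so your closing retract argument, while sound, is not needed; both endgames are valid.
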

\begin{proof}
We denote by $\TNF\colon\cat C\to\TNC$ the right adjoint to the inclusion $\TNC \to\cat C$, and by $\TNU{}$ the counit.
    The implication \ref{tn-char-1}$\implies$\ref{tn-char-2} follows from \zcref{other-TF-properties}.

    Vice versa, assume that \ref{tn-char-2} holds. By the dual of \zcref{H-K}, it suffices to prove that the \npcokernel{}s of the components of the counit $\TNU{}$ have \nptrivial{} $\TNC$-coreflections. Let $X$ be any object in $\cat C $ and let $(Q,q)$ be the \npcokernel{} of $\TNU X$. We aim to show that $\TNU Q$ is \nptrivial{}. Consider the following commutative diagram,
    \[
    \begin{tikzcd}[%
    every matrix/.append style = {name=D}, column sep= 4em,
    execute at end picture={
    \node[minimum size =3.7em, name=1] at (D-1-2) {};
\node[minimum size =3.7em, name=2] at (D-2-1) {};
\node[minimum size =3.7em, name=3] at (D-3-1) {};
\draw[dashed] (1.north) to[out=180, in=90] (2.west) to (3.west) to[out=-90, in =180] (3.south) to[out=0,in=-90] (3.east) to[out=90, in =-90] (3.east) to (2.east) to[out=90, in = 180] (1.south) to[out=0, in= -90] (1.east) to[out=90, in=0] (1.north);
    }]
        &\TNP X\arrow[d,normmono, "\TNU X"{yshift=-.5em}]\arrow[dl, bend right, "k"', normmono]
        \\
        P\arrow[r, "h", normmono]\arrow[d,"p"', normepi] & X\arrow[d,normepi, "q"]
        \\
        \TNP Q\arrow[r, "\TNU Q"',normmono] & Q,
    \end{tikzcd}
\]
where the lower square is the pullback of $q$ along $\TNU Q$, while $k$ is the unique map such that the upper triangle commutes and $(\TNP X,k)$ is the \npkernel{} of $p$, as ensured by \zcref{kernel-of-parall}. Since $p$ is the pullback of a \npnormepi{}, it is itself a \npnormepi{}. Hence, the sequence encircled in the above diagram is \npexact{}. By closure of $\TNC$ under \zext{}s, it follows that $P\in\TNC$, and we can therefore write
\[
\mcomp{p,\TNU Q}=\mcomp{h,q}=\mcomp{\TNU P\inv,\TNP h,\TNU X,q},
\]
where the last equality follows from the naturality of $\TNU{}$ and the fact that $\TNU P$ is an isomorphism. The composite $\mcomp{\TNU X,q}$ is \nptrivial{}, and thus $\mcomp{p,\TNU Q}$ is \nptrivial{}. Since $p$ is a \npnormepi{}, it follows that $\TNU Q$ is \nptrivial{}, as desired.
\end{proof}
}
\section{Torsion theories and closure operators}
\label{sec-tt-and-CO}
There is a deep connection between torsion theories and closure operators. In a pointed category, under appropriate hypotheses, torsion theories are in a bijective correspondence with certain closure operators on the class of normal monomorphisms in that category (see \cite{BORCEUX94b,BOURN,THOLEN}). More recently, using the language of non-pointed torsion theories, it was shown that for any (reasonably well-behaved) class of monomorphisms on a category, a closure operator on that class is equivalent to a (non-pointed) torsion theory on a category having those monomorphisms as objects. In this section, we show that both of these correspondences can be seen as instances of a more general correspondence between \ztts{} on a \wnormal{} category and a special class of closure operators on \npnormmono{}s of that category.

Recall that $\cat C$ denotes a \wnormal{} category. Throughout this section, we will also assume that the class $\tobj$ of trivial objects in $\cat C$ is a reflective subcategory of $\cat C$. This ensures that every identity morphism is a \npnormmono{}. We will denote by $\refl{}$ the reflector and by $\uni{}$ the unit of the reflection. Notice that, since the components of the counit of the coreflection are monomorphisms, the components of the unit of the reflection are automatically epimorphisms and hence \npcokernel{}s (by the dual of \zcref{monocoref}; see also \cite{PN}).

With these assumptions in place, we now introduce the properties of the morphisms that will serve as the basis for the closure operators considered in this section.
\begin{definition}
Let $\monoclass$ be a class of monomorphisms in a category $\cat E$. A map in $\monoclass$ will be called an \emph{\mmono{}}. Assume $\cat E$ admits pullbacks along \mmono{}s. We say that $\monoclass$ is a \emph{\wscm{}} if the following properties hold.
\begin{enumerate}
    \item $\monoclass$ contains all identity morphisms;
    \item For all $f$ and $g$, composable maps in $\cat E$, if $g$ and $\comp fg$ are \mmono{}s, then $f$ is also an \mmono{}. 
    \item The pullback of an \mmono{} along any map in $\cat E$ is again an \mmono{} (in particular $\monoclass$ is closed under isomorphisms).
\end{enumerate}
 Given an object $X$ in $\cat E$, we will denote by $\monod X$ the class of \mmono{}s having $X$ as domain.

Given monomorphisms $m$ and $n$, we write $m\cont n$ if $m$ and $n$ have the same domain and there exists a (necessarily unique) map $j$ such that $\comp mj=n$. (Note that if $m,n\in\monoclass$, then $j\in\monoclass$.) If both $m\cont n$ and $n\cont m$, then we write $m\same n$ and the map $j$ is an isomorphism.
\end{definition}
(The term `semi-stable' is chosen for consistency with \cite{PN}, where both stable and semi-stable classes are discussed.)
\begin{remark}
    Due to the reflectivity of $\tobj$, the class of \npnormmono{}s in $\cat C$ is \wstab{} (see \cite{PN}).
\end{remark}
We now fix a set of definitions for various types of closure operators. While terminology and axiomatics vary across the literature, we adopt a set of names and conventions that suit our purposes here, without claiming any canonical status for these choices. We refer the reader for example to \cite{DIKRANJAN10} for an extensive guide to closure operators.
{

\begin{definition}
    Let $\monoclass$ be a \wscm{} on some category. A \emph{closure operator $\clo$ on $\monoclass$} is given by a family of functions $(\clf X\colon\monod X\to\monod X)_{X}$ such that the following axioms are satisfied for all $a,b\in\monod X$ and all $f\colon X\to Y$.
    \begin{enumerate}
        \item Extension: $a\cont \cl Xa$.
        \item Monotonicity: $a\cont b\implies \cl X a\cont\cl X b$.
        \item Continuity: $\cl Y {\preim f a}\cont \preim f{\cl Xa}$.
\end{enumerate}
        
        Given $a\in\monod X$, we say that $a$ is \emph{$C$-closed} (or simply \emph{closed}) if $\cl X a\same a$; we say that $a$ is \emph{$C$-dense} (or simply \emph{dense}) if $\cl Xa\same \id X$.

        If, additionally, the following condition holds for all $a\in\monod X$, then $\clo$ is called \emph{idempotent}.
        \begin{enumerate}
        \setcounter{enumi}{3}
            \item Idempotency: $\cl X{\cl Xa}\same \cl Xa$.
        \end{enumerate}
        
        Finally, we say that an idempotent closure operator $\clo$  is \emph{\wuniv{}} if, for all $a\in\monod X$, with $a\colon A\to X$, $\cl X a\colon B\to X$ and $j \colon A \to B$ the unique morphism such that $\mcomp{j,\cl Xa}=a$, the following additional condition is satisfied.
        \begin{enumerate}
        \setcounter{enumi}{4}
        \item 
            \Wunivty{}: $\cl B {j}\same \id B$.
        \end{enumerate}
        Given two closure operators $C$ and $C'$ on the same class of monomorphisms, we call them isomorphic (and write $C\same C'$) if for all objects $X$ and all $a\in\monod X$ one has $c_X(a)\same c'_X(a)$. We will often implicitly identify closure operators up to isomorphism.
\end{definition}

Next, we introduce some specialised classes of closure operators that are of interest in the context of a \wnormal{} category $\cat C$; these will be the ones involved in the characterisation of torsion theories.  
\begin{definition}
    A \emph{\nclosop{} on $\cat C$} is an idempotent closure operator on the class of \npnormmono{}s of $\cat C$ such that for all \npnormmono{}s $a\colon A\nmonoto X$ and all \npnormepi{}s $f\colon X\nepito Y $ one has
    $\cl Y {\preim f a}\same \preim f{\cl Xa}$.
\end{definition}
Of course this definition is based on that of \emph{homological closure operators} (\cite{BOURN}).

We are finally ready to state and prove the main result of this section.
{
\newcommand{\Qok}[1]{Q_{#1}}
\newcommand{\qok}[1]{q_{#1}}
\renewcommand{\TNF}{K}%
\renewcommand{\TNP}[1]{\TNF_{#1}}%
\renewcommand{\TNU}[1]{k_{#1}}
\begin{proposition}
\label{TT-and-CO}
    There is a bijective correspondence between \zrefl{} subcategories of $\cat C$ and \nclosop{}s on $\cat C$. This correspondence specialises to a bijective correspondence between \ztf{} subcategories of $\cat C$ and \wuniv{} \nclosop{}s on $\cat C$ in such a way that the \ztt{} corresponding to a closure $C$ operator is given by
\begin{equation}
\label{T-F-clo}
    \begin{split}
    \TNC &= \{ X\in\cat C\mid \cuni X\colon \corefl X\to X \text{ is $C$-dense}\},
    \\
 \TFC & = \{ X\in\cat C\mid \cuni X\colon \corefl X\to X \text{ is $C$-closed}\}
      \end{split}
\end{equation}
    (here, we recall that $\cuni X\colon\corefl X\nmonoto X$ denotes a $\tobj$-coreflection of $X$).
    \end{proposition}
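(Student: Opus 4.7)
The plan is to construct the two passages of the correspondence explicitly and verify that they are mutually inverse. Starting from a $\tobj$-reflective $\TFC$ with reflector $\TFP{}$ and unit $\TFU{}$, I would associate to each $\tobj$-normal monomorphism $a\colon A \nmonoto X$ the closure $\cl X a$ defined as the $\tobj$-kernel of the composite $\mcomp{q_a, \TFU{X/A}}\colon X \to \TFP{X/A}$, where $q_a$ is the $\tobj$-cokernel of $a$. Extension is immediate from the triviality of $\mcomp{a,q_a}$; monotonicity is a Noether-type bookkeeping argument via \zcref{Noether}; continuity along an arbitrary $f$ combines the naturality of $\TFU{}$ with the pullback-stability of $\tobj$-kernels along $\tobj$-normal monos (\zcref{pb-nmono}, item \zcref*{specific-pb-mono}) and \zcref{ses-morphism}. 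For idempotency, I would factor $\mcomp{q_a,\TFU{X/A}}$ by the $(\NEpi,\TKer)$-system of \zcref{fact-sys} as $\mcomp{e,m}$ with $e\colon X\nepito Y$; then $e = q_{\cl X a}$, and naturality of $\TFU{}$ together with $m\in\TKer$ forces $\TFU{Y}\in\TKer$ via \zcref{tker-charact}, so $\cl X{\cl X a}\same\cl X a$. The normality axiom, i.e.\ equality $\cl Y{\preim f a} \same \preim f{\cl X a}$ for $f$ a $\tobj$-normal epi, is where axiom \ref{wnorm-3} of weak prenormality becomes essential: it guarantees that the pullback of the normal-epi part of the factorisation remains a normal epi.

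Conversely, from a normal closure operator $C$ I would define $\TFC$ by the second formula in \zcref{T-F-clo} and construct the $\TFC$-reflection of an arbitrary $X$ as the $\tobj$-cokernel of $\cl X{\cuni X}$. Normality and idempotency of $C$ force this cokernel to lie in $\TFC$, while continuity supplies the universal property. The two constructions are then mutually inverse by means of the chain of equivalences $X\in\TFC \iff \TFU X$ is an isomorphism $\iff \cuni X$ is $C$-closed, which can be verified directly from the definitions using \zcref{TF-is-ref,tker-charact}.

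For the specialisation to torsion-free subcategories, assume $\TFC$ is \ztf{}. Given $a\colon A\nmonoto X$ with $\cl X a\colon B\nmonoto X$ and factorisation $j\colon A\nmonoto B$, I would apply Noether's isomorphism (\zcref{Noether}) to the chain $A\nmonoto B\nmonoto X$ to identify $B/A$ as a $\tobj$-normal subobject of $X/A$ whose quotient is $X/B\iso \TFP{X/A}$. Hence $B/A$ is precisely the $\tobj$-kernel of $\TFU{X/A}$, and torsion-freeness forces $B/A\in\tobj$ by the dual of \zcref{char-tor-obj}; thus $j$ is $C$-dense in $B$, i.e.\ $C$ is \wuniv{}. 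For the converse, weak universality of $C$ combined with \zcref{idmpt-rad} --- applied after noting that, under the constructed closure, the \npkernel{} of $\TFU X$ coincides with $\cl X{\cuni X}$ up to the composite with $\cuni X$ --- yields torsion-freeness of $\TFC$.

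The main obstacle I expect is the verification of the normality axiom of $C$ and of the weak-universality characterisation of torsion-freeness. Both rely on controlling how $\tobj$-kernels interact with pullbacks, which is precisely the interaction that weakens in the non-pointed setting (cf.\ \zcref{kernel-of-parall,remark-pb}). The required control only emerges from the careful interplay of the factorisation system of \zcref{fact-sys}, Noether's isomorphism, and axiom \ref{wnorm-3}; in particular, the Noether-based identification of $B/A$ used in the weak-universality step is where the pointed intuition must be most substantially recast.
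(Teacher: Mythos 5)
Your overall route coincides with the paper's: both follow Bourn's Theorems~2.4 and~4.15 with the initial maps $0_X$ replaced by the coreflections $\cuni X$, defining $\cl Xa$ as the \npkernel{} of $\mcomp{q_a,\TFU{Q_a}}$ (equivalently, the pullback of the \npkernel{} of $\TFU{Q_a}$ along $q_a$), recovering the reflection from a closure operator as the \npcokernel{} of $\cl X{\cuni X}$, and running the verifications through the $(\NEpi,\TKer)$-factorisation system, \zcref{ses-morphism}, Noether's theorem and \zcref{idmpt-rad}, exactly as you indicate.

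There is, however, one false assertion in your weak-universality step. Having correctly identified $B/A$ with the \npkernel{} of $\TFU{X/A}$ (the torsion radical of $X/A$), you claim that torsion-freeness of $\TFC$ forces $B/A\in\tobj$. It does not: the radical of an arbitrary object is a \emph{torsion} object, not a trivial one, and if $B/A$ were always trivial then $\cl Xa\same a$ would hold for every $a$, i.e.\ every \npnormmono{} would be closed. What density of $j\colon A\nmonoto B$ actually requires is that $\cl Bj$, namely the \npkernel{} of $\mcomp{q_j,\TFU{B/A}}$, be all of $B$; since $q_j$ is a \npnormepi{}, this amounts to $\TFU{B/A}$ being \nptrivial{}, i.e.\ to $B/A\in\TNC$ --- which is precisely what the dual of \zcref{char-tor-obj} (the lemma you cite) gives, because torsion-freeness makes the radical idempotent and hence $B/A$ a torsion object. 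So the lemma is the right one but the conclusion you draw from it is the wrong (and stronger, false) one; replacing ``$B/A\in\tobj$'' by ``$B/A\in\TNC$'' repairs the step, and with that correction your argument agrees with the paper's, which phrases the same point as the identity between the \npkernel{} of $\TFU{\TNP X}$ and the closure of the coreflection $\cuni{\TNP X}$, combined with \zcref{idmpt-rad}.
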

    \begin{proof}
    The proof proceeds essentially as in \cite[Theorems~2.4 and 4.15]{BOURN}, with the initial maps $0_X\colon0\to X$ replaced by the $\tobj$-coreflections $\cuni X\colon \corefl X\nmonoto X$. For completeness, we briefly outline the main steps and highlight the key points where additional care is required, or where meaningful differences are involved.
    
    Given a closure operator $\clo=(\clf X)_{X\in\cat C}$, the associated subcategory takes the form $\TFC$ specified in \ref{T-F-clo}, with the reflection of an object $X$ given by the \npcokernel{} of $\cl X{\cuni X}$.

    Vice versa, let $\TFC$ be a \zrefl{} subcategory of $\cat C$, with reflector $\TFF\colon \cat C\to\TFC$ and unit $\TFU{}$. For any object $X$ in $\cat C$, let $\TNU X\colon \TNP X\nmonoto X$ be the \npkernel{} of $\TFU X$. Given a \npnormmono{} $a\colon A\nmonoto X$ in $\cat C$, we denote by $\qok a\colon X\nepito \Qok a$ its \npcokernel{}. The closure $\cl  X a$ of $a$ is then defined as the pullback of $\TNU {\Qok a}$ along $\qok a$. By \zcref{pb-nmono}, $\cl Xa$ can be equivalently defined as the \npkernel{} of $\mcomp{\qok A, \TFU {\Qok a}}$. We prove, for example, that for any \npnormmono{} $a\colon A\nmonoto X$ and any \npnormepi{} $f\colon Y\nepito X$, we have $\cl Y {\preim f a}\same \preim f{\cl Xa}$. As seen in the proof of \cite[Theorem~2.4]{BOURN}, this reduces to proving that in the following diagram,
    \[
    \begin{tikzcd}[column sep =4em ]
        P\arrow[r, "p",normmono]\arrow[d] & Y\arrow[d, "f", normepi] \arrow[r,normepi, "\qok p"]&\Qok p\arrow[d, "h", dotted]
        \\
        A\arrow[r, "a"', normmono] & X\arrow[r, normepi, "\qok a"'] &\Qok a
    \end{tikzcd}
    \]
    where the left-hand square is a pullback by construction, the induced map $h$ is an isomorphism. Indeed, $h$ has \nptrivial{} \npkernel{} by \zcref{ses-morphism}, and it is also a \npnormepi{}: $f$ and $\qok a$ are \npnormepi{}s, so their composite $\mcomp{f,\qok a}=\mcomp{\qok p,h}$ is a \npnormepi{}; since $\qok p$ is also a \npnormepi{}, it follows by the general properties of factorisation systems that $h$ is a \npnormepi{} as well. 

    For the second part of the proposition, let $\TFC $ be a \zrefl{} subcategory of $\cat C$, with reflector $\TFF \colon \cat C\to\TFC$ and unit $\TFU{}$, and let $\clo=(\clf X)_{X\in\cat C}$ be the associated closure operator. 
    For any object $X\in\cat C$, let $\TNU X\colon \TNP X\nmonoto X$ denote the \npkernel{} of $\TFU X$, so that by construction we have $\TNU{\TNP X}\same\cl {KX} {\cuni{\TNP X}}$. Clearly, the composite $\mcomp{\cuni X,\phi_X}$ is \nptrivial{}, and therefore there exists a unique morphism $j_X\colon \corefl X\nmonoto \TNP X$ such that $\mcomp{j_X,k_X}=\cuni X$. By coreflectivity of $\tobj$, we have that $j_X\cont \cuni{\TNP X}$. For the converse inequality $\cuni{\TNP X}\cont j_X$, consider the following diagram. 
    \[
    \begin{tikzcd}[column sep = 5em]
        \corefl{\TNP X}\arrow[r, "\corefl{\TNU X}"]\arrow[d, normmono, "\cuni{\TNP X}"'] & \corefl X\arrow[d, normmono, "\cuni X"]\arrow[dl, normmono, "j_X"description]
        \\
        \TNP X\arrow[r,normmono, "\TNU X"'] & X
    \end{tikzcd}
    \]
    The outer square commutes by naturality of $\cuni{}$, and the right triangle commutes by definition of $j_X$. Since $\TNU X$ is a monomorphisms, it follows that the left triangle also commutes. Hence $\cuni{\TNP X}\cont j_X$, and we conclude that $\cuni{\TNP X}\same j_X$. Therefore, we have
    \[
    \TNU{\TNP X}\same\cl {\TNP X}{\cuni {\TNP X}}\same\cl {\TNP X}{ j_X}.
    \]
    If $\clo$ is \wuniv{}, we also have $\cl {\TNP X}{ j_X}\same \id X$, proving that $\TNU{\TNP X}$ is an isomorphism for all objects $X$. It then follows from \zcref{idmpt-rad} that $\TFC$ is \ztf{}. Conversely, if $\TFC$ is torsion-free, then $\TNU{\TNP X}$ is an isomorphism for all $X$, and thus the equation above shows that $j_X$ is dense for all $X$. The argument then proceeds as in \cite[Theorem~4.15]{BOURN}.
    \end{proof}
}
{
\newcommand{\NS}[1]{{#1}^{\flat}}
\begin{remark}
\label{clos-op-on-mono}
    Let $\monoclass$ be a \wscm{} on a category $\cat E$ with pullbacks. We represent $\monoclass$-monomorphisms with the special arrow `$\smonoto$'. By \cite{PN}, if we denote by $\tobj$ and $\monocat$  the full subcategories of $\Arr E$ generated by the class of isomorphisms in $\cat E$ and by the class $\monoclass$, respectively, then $\monocat$ is a \wnormal{}  category. A morphism in $\monocat$ from $a\colon A\smonoto X$ to $b\colon B\smonoto Y$ is given by a pair of morphisms $(u,v)$ in $\cat E$ such that the following diagram commutes. 
    \[
    \begin{tikzcd}
         A\arrow[r, "u"]\arrow[d, "a"',smono] & B\arrow[d,"b",smono]
        \\
        X\arrow[r, "v"'] & Y
    \end{tikzcd}
    \]
   Such a morphism 
   \begin{enumerate*}[(1)]
       \item is a  \npnormmono{} if and only if $v$ is an isomorphism; 
       \item is a \npnormmono{} if and only if $u$ is an isomorphism and $v\in\monoclass$; 
       \item has \nptrivial{} \npkernel{} if and only if the diagram is a pullback in $\cat E$.
   \end{enumerate*} In particular, every \npnormepi{} is, up to isomorphism, of the canonical form $(j,\id X)\colon a\nepito b$, with $a,b,j\in\monoclass$ as in \zcref{subobj} below, and every \npnormmono{} is, up to isomorphism, of the canonical form $(\id A,b)\colon j\nmonoto a$, with $a,b,j\in\monoclass$ as in \zcref{subobj} below.
   \begin{equation}
   \zcsetup{reftype=diagram}
   \label{subobj}
   \begin{tikzcd}
       A\arrow[r, "j",smono]\arrow[dr,smono, bend right, "a"']&B\arrow[d,smono,"b"]
       \\
       &X
   \end{tikzcd}\quad\implies\quad
   \begin{tikzcd}
       A\arrow[r, equal]\dar[smono, "j"']\arrow[dr,phantom, "\scriptstyle\tobj\textnormal{-normal}"{sloped,yshift=.4em}, "\scriptstyle\textnormal{mono}"'{sloped,yshift=-.4em}] & A\dar[smono, "a"]
       \\
       B\rar["b"',smono] & X
   \end{tikzcd},\quad
   \begin{tikzcd}
       A\rar["j",smono]\dar[smono, "a"']\arrow[dr,phantom, "\scriptstyle\tobj\textnormal{-normal}"{sloped,yshift=.4em}, "\scriptstyle\textnormal{epi}"'{sloped,yshift=-.4em}] & B\dar[smono, "b"]
       \\
       X\rar[equal] & X
   \end{tikzcd}
   \end{equation}

    There is a bijective correspondence between the following classes.
    \begin{enumerate}
        \item \label{S-clos-op} \Wuniv{} closure operators on $\monoclass$;
         \item\label{Z-S-clos-op} \wuniv{} \nclosop{}s on $\monocat$;
        \item \label{S-TT} \ztts{} on $\monocat$.
    \end{enumerate}
    The correspondence between \ref*{S-clos-op} and \ref*{S-TT} is a well-known result (see \cite{GRANDIS20}), and, in particular, the torsion theory associated to a closure operator $\clo$ on $\monoclass$ is given by 
    \begin{equation*}
    \begin{split}
        &\cat T = \{a\in\monoclass\mid a \textnormal{ is } \clo\textnormal{-dense}\},
        \\
        &\cat F = \{a\in\monoclass\mid a \textnormal{ is } \clo\textnormal{-closed}\}.
    \end{split}
    \end{equation*}
    The correspondence between \ref*{Z-S-clos-op} and \ref*{S-TT} is instead due to \zcref{TT-and-CO}. We want to explicitly describe the correspondence between \ref{S-clos-op} and \ref{Z-S-clos-op}.
    
\newcommand{\clos}{C^\sharp}%
\newcommand{\clfs}[1]{c^\sharp_{#1}}%
\newcommand{\cls}[2]{c^\sharp_{#1}(#2)}
    Given a closure operator $\clo=(\clf X)_{X\in\cat E}$, we define the closure operator $\clos=(\clfs a)_{a\in\monocat}$, which acts on \npnormmono{}s. For a \npnormmono{} $(\id A,b)\colon u\to a$ in $\monocat$, with $a,b,j\in\monoclass$ as in \zcref{subobj}, we set $\cls a{(\id A,b)}=(\id A, \cl X b)\colon \comp uj\to a$, where $j$ denotes the unique map such that $\mcomp{j,\cl X b}=b$.
    \[
    \begin{tikzcd}
        A\arrow[r,equal]\arrow[d, "u"',smono] & A\arrow[r, equal]\arrow[d] & A\arrow[d, "a",smono]
        \\
        B\arrow[r, "j"',smono]\arrow[rr, bend right, "b"', smono,yshift =-.5em, shorten = -.5em]&\cdot\arrow[r, "\cl Xb"',smono] & X
    \end{tikzcd}
    \]
   If $\clo$ is idempotent, then $\clos$ is a \nclosop{} on $\monocat$, and if $\clo$ is \wuniv{}, then so is $\clos$.

\newcommand{\dlo}{D}
\newcommand{\dlf}[1]{d_{#1}}
\newcommand{\dl}[2]{d_{#1}({#2})}
\newcommand{\dlob}{D^{\flat}}
\newcommand{\dlfb}[1]{d_{#1}^\flat}
\newcommand{\dlb}[2]{d_{#1}^\flat({#2})}
    Conversely, given a closure operator $\dlo=(\dlf a)_{a\in\monocat}$ on \npnormmono{}s, we define $\dlob=(\dlfb X)_{X\in \cat E}$ as follows: for any $b\colon B\smonoto X$ in $\monoclass$, suppose $\dl b{(\id B, b)\colon \id B\nmonoto b}=(\id B,d)$, with $d\in\monoclass$; then we set $\dlb Xb=d$. (Here, we assume that $\dlo$ preserves the `standard form' $(\id{},s\in\monoclass)$ of \npnormmono{}s, which holds up to isomorphism of closure operators.) We have that $\dlob$ is a closure operator on $\monoclass$; moreover, if $\dlo$ is idempotent or \wuniv{}, then so is $\dlob$.

    Clearly, for any closure operator $\clo$ on $\monoclass$, we have $\clo^{\sharp\flat}=C$. Vice versa, for a \nclosop{} $\dlo$ on $\monocat$, we have $\dlo^{\flat\sharp}\same \dlo$. This equivalence relies on the observation that, for any \npnormmono{} $(\id A,b)\colon j\to a$ in $\monocat$, with $a,b,j\in\monoclass$ as in \zcref{subobj}, we have 
    \[
    \begin{multlined}[0.67\textwidth]
    \dlf a (\id A,b)=\dlf a \Bigl((j,\id X)^{-1}\bigl((\id B,b)\colon \id B\nmonoto b\bigr)\Bigr)\same\\\same(j,\id X)^{-1}\Bigl(\dlf b\bigl((\id B,b)\colon \id B\nmonoto b\bigr)\Bigr)=\dlf a^{\sharp\flat}(\id A,b),\end{multlined}\]
    since $(j,\id X)\colon a\nepito b$ is a \npnormepi{}.
\end{remark}
}
}
\section{Torsion theories and factorisation systems}{
\newcommand{\lc}{\mathscr E}
\newcommand{\rc}{\mathscr M}%
\label{sec-TT-and-FS}
The connection between torsion theories, reflective subcategories and factorisation systems has been widely studied (\cite{CASSIDY,ROSICKY,EVERAERT,GRANDIS20,HTT}). It is a well-known result that any reflective subcategory of a given category gives rise to a prefactorisation system, whose left class consists of the morphisms inverted by the reflector (see \cite{CASSIDY}). If the category is finitely complete and the reflection is semi-left-exact (in the sense of \cite{CASSIDY}), this prefactorisation system becomes a genuine factorisation system.

In the pointed case, under suitable hypotheses on the ambient category, the reflection associated with a torsion theory is always semi-left-exact, and thus induces a factorisation system (\cite{EVERAERT,THOLEN}). This is not the case in the non-pointed setting. Indeed, as shown in \zcref{no-semi-left}, even under \pnormality{} assumptions, torsion-free subcategories are not semi-left-exact, and hence the results of \cite{CASSIDY} do not automatically apply.

However, in \cite{EVERAERT}, it was shown that in a homological category, there is an alternative way to associate a factorisation system to a torsion theory. This approach yields a correspondence between torsion theories satisfying a certain property (denoted `(N)' in \cite{EVERAERT}) and stable factorisation systems on the category.

In this section, we aim to partially recover this result for the non-pointed \wnormal{} category $\cat C$. As a first step, we introduce the notion of \emph{\car{}} \ztt. This is a non-pointed version of property (N) in \cite{EVERAERT}.
\begin{definition}
We say that a \ztt{} is \emph{\car} if for every object $X$, the torsion part $\TNU X\colon \TNP X\nmonoto X$ of $X$ is a characteristic \npnormmono{} in $X$, i.e.\ for any \npnormmono{} $k\colon K\nmonoto \TNP X$ we have that the composite $\mcomp{k,\TNU X}\colon K\to X$ is again a \npnormmono{}. 
\end{definition}
The term `characteristic' is borrowed from group theory, as characteristic subgroups can be described using the property given in the above definition. Notice that all torsion theories in the category of groups are (0-)characteristic. 

We are now ready to state the main result of this section.
In the remainder of the section, we assume -- just as in \zcref{sec-tt-and-CO} -- that the subcategory $\tobj$ of trivial objects in the \wnormal{} category $\cat C$ is reflective, and keep the same notation $\refl{}$ and $\uni{}$ for the reflector and the unit of the reflection, respectively.
\begin{proposition}
    There is a bijective correspondence between \car{} \ztts{} on $\cat C$ and factorisation systems $(\lc,\rc)$ on $\cat C$ such that $\lc$ is a class of \npnormepi{}s closed under pullbacks along \npnormmono{}s.
\end{proposition}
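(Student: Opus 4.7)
The plan is to establish the bijection by exhibiting explicit constructions in both directions, adapting the strategy of \cite{EVERAERT} from the pointed homological setting. Given a \car{} \ztt{} $(\TNC,\TFC)$, I would take $\lc$ to be the class of \npnormepi{}s whose \npkernel{} lies in $\TNC$, and set $\rc := \lc^{\uparrow}$ (the class right-orthogonal to $\lc$). Conversely, given a factorisation system $(\lc,\rc)$ as in the statement, I would define $\TNC := \{X \mid \uni X \in \lc\}$, where $\uni X\colon X\nepito \refl X$ is the unit of the reflection into $\tobj$, and recover $\TFC$ from the resulting radical as in \zcref{H-K}.

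The first step is to verify that $\lc$ is closed under pullback along \npnormmono{}s: if $f\in\lc$ has \npkernel{} $K\in\TNC$ and we pull back $f$ along a \npnormmono{}, \zcref{kernel-of-parall} ensures that the pulled-back morphism has the same \npkernel{} $K\in\TNC$, while axiom~\zcref{wnorm-3} of \wnormal{} categories ensures that it is again a \npnormepi{}. Hence the pullback lies in $\lc$. Pullback-stability on the $\rc$-side is automatic from right orthogonality.

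The central technical step is constructing the $(\lc,\rc)$-factorisation of an arbitrary $f\colon X\to Y$. The intermediate object $Q$ is built by quotienting $X$ by a canonical torsion subobject contained in the \npkernel{} of $f$: starting from the induced morphism $\TNF f\colon \TNP X\to \TNP Y$ between torsion parts, one extracts a \npnormmono{} $L\nmonoto \TNP X$ capturing the torsion contribution to the \npkernel{} of $f$, and the \car{} hypothesis promotes $L$ to a \npnormmono{} of $X$. Its \npcokernel{} $p\colon X\nepito Q$ is then the candidate $\lc$-morphism, and the naturality of $\TNU{}$ forces $f$ to factor through $Q$ via some $q\colon Q\to Y$. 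That $p$ lies in $\lc$ is verified by identifying its \npkernel{} with $L$ and analysing $L$ using the torsion structure of $\TNP X$; that $q\in\rc$ follows from a diagonal-fill argument exploiting the uniqueness up to isomorphism of $(\TNC,\TFC)$-presentations (\zcref{TF-is-ref}) together with Noether's third isomorphism theorem (\zcref{Noether}).

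The main obstacle I anticipate is proving that $L$, which is the \npkernel{} of $p$, genuinely lies in $\TNC$, since $\TNC$ is not in general closed under \npnormmono{} subobjects, even of its own objects. This is precisely where the \car{} hypothesis enters: it should allow one to identify $L$ with a suitable \znq{} of the torsion object $\TNP X$, after which closure of $\TNC$ under \znq{}s (dual of \zcref{other-TF-properties}) gives $L\in\TNC$. Finally, to see that the two assignments are mutually inverse, one checks that applying the factorisation to a unit $\uni X$ recovers the torsion-part map $\TNU X$, and that the torsion class rebuilt from a factorisation system $(\lc,\rc)$ coincides with the original via the characterisations of torsion objects in the dual of \zcref{TF-is-ref}.
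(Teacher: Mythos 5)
Your outline coincides with the paper's: both directions adapt \cite[Proposition~3.5]{EVERAERT}, with the left class $\mathscr E$ consisting of the \npnormepi{}s whose \npkernel{} lies in $\TNC$, the torsion class recovered from a factorisation system as $\{X\mid \uni X\in\mathscr E\}$, and the stability of $\mathscr E$ under pullback along \npnormmono{}s obtained exactly as you say, from \zcref{kernel-of-parall} together with \zcref{wnorm}. However, the central factorisation step is where your plan goes wrong. The intermediate object must be the \npcokernel{} of $L:=\TNP{}(\zker(f))$, the torsion coreflection of the \npkernel{} of $f$ -- not something extracted from the \npkernel{} of $\TNP{}f\colon \TNP{}X\to\TNP{}Y$, which is merely a $\tobj$-normal subobject of the torsion object $\TNP{}X$ and therefore need \emph{not} be torsion ($\TNC$ is closed under \znq{}s, not under $\tobj$-normal subobjects). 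With the correct choice of $L$, the ``main obstacle'' you single out ($L\in\TNC$) is automatic; the genuine obstacle is the one you only gesture at, namely that $L$ is a priori $\tobj$-normal only in $\zker(f)$, and one must show it is $\tobj$-normal in $\TNP{}X$ so that the \car{} hypothesis upgrades $L\nmonoto X$ to a \npnormmono{} and its \npcokernel{} $p\colon X\nepito Q$ exists with $\zker(p)\iso L$. Your proposal to realise $L$ as a \znq{} of $\TNP{}X$ and invoke closure of $\TNC$ under quotients does not achieve this. Relatedly, taking $\mathscr M:=\mathscr E^{\uparrow}$ leaves you with no concrete handle on the second factor $q\colon Q\to Y$; the paper's explicit description $\mathscr M=\{f\mid\zker(f)\in\TFC\}$ is what makes $q\in\mathscr M$ verifiable, via \zcref{Noether}, since $\zker(q)\iso\zker(f)/L$ is the torsion-free reflection of $\zker(f)$.

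A second, independent gap: you never verify that the \ztt{} produced from a factorisation system is \car{}, which is precisely the point the paper isolates as delicate in the relative setting. There one starts from an arbitrary \npnormmono{} $k\colon K\nmonoto X$, passes to its \npcokernel{}, uses \zcref{ses-pb-po} to identify the $\tobj$-reflection of $K$ with a $\tobj$-coreflection of the quotient, and only then compares the $(\mathscr E,\mathscr M)$-factorisations of $\uni K$ and of the cokernel map, concluding with \zcref{kernel-of-parall}. Without this step (and without checking that the factorisation of $\uni X$ really yields a $(\TNC,\TFC)$-presentation of every $X$), the two assignments are not shown to land in the claimed classes, so the bijection is not established.
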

\begin{proof}
    The proof follows the same outline of the one of \cite[Proposition~3.5]{EVERAERT}, with initial and terminal maps replaced by  $\tobj$-coreflections and $\tobj$-reflections, respectively. Some additional care is required when multiple $\tobj$-coreflections and $\tobj$-reflections are involved simultaneously. We outline the main steps of the proof.

    Given a \car{} \ztt{} $(\TNC,\TFC)$, the associated factorisation system is given by
    \[
    \begin{split}
        \lc&= \{f\in \NEpi\mid\zker(f)\in\TNC\},
        \\
        \rc &= \{f\in\ar(\cat C)\mid \zker (f)\in\TFC\},
    \end{split}
    \]
    where we denoted by `$\tobj\operatorname{-ker}$' the object part of \npkernel{}s.
    Following \cite{EVERAERT} without substantial complications, showing that these two classes of morphisms define a factorisation system satisfying the desired properties reduces to applying the non-pointed version of Noether's third isomorphism theorem (\ref{Noether}) and the fact that  the pullback of a morphism along a \npnormmono{} has the same \npkernel{} as the original morphism (\zcref{kernel-of-parall}).

    Conversely, given a factorisation system $(\lc,\rc)$ on $\cat C$ with $\lc\subseteq \NEpi$ and closed under pullbacks along \npnormmono{}s, the associated torsion theory is defined by
    \[
    \begin{split}
        \TNC&= \{X\in \cat C\mid\uni X\in \lc\},
        \\
        \TFC &= \{X\in\cat C\mid \uni X\in \rc\},
    \end{split}
    \]
    and the $(\TNC,\TFC)$-presentation of an object X is given by the encircled sequence in the following diagram,
   \[
   \begin{tikzcd}[%
    every matrix/.append style = {name=D}, 
    execute at end picture={
    \node[minimum size =3.4em, name=1] at (D-1-1) {};
    \node[minimum size =3.4em, name=2] at (D-1-3) {};
    \draw[dashed] (1.north) to[out=0, in=180] (2.north) to[out=0, in=90] (2.east) to[out=270, in=0] (2.south) to[out=180, in =0] (1.south) to[out=180,in=270] (1.west) to[out=90, in=180] (1.north);
}]
       \TNP X\arrow[r,"\TNU X", normmono] & X\arrow[r,normepi, "\TFU X"]\arrow[dr, bend right, "\uni X"', normepi] & \TFP X\arrow[d, "\psi_X"{yshift=-.4em}]
       \\
       && \refl X
   \end{tikzcd}
   \]
   where $\uni X=\mcomp{\TFU X, \psi_X}$ is the $(\lc,\rc)$-factorisation of $\uni X$, and $\TNU X$ is the \npkernel{} of $\TFU X$. Once again, the proof mostly follows the one in \cite{EVERAERT}. As a representative case -- one that involves some subtleties -- we verify that this \ztt{} is \car{}. Let $k\colon K\nmonoto X $ a \npnormmono{}, and, unlike in \cite{EVERAERT}, we will need to consider its \npcokernel{} $f\colon X\nepito Y$. This choice ensures, by \zcref{ses-pb-po}, that the $\tobj$-reflection  of $K$ and the $\tobj$-coreflection of $Y$ are isomorphic, and that there exists a map $\cuni{}\colon\refl K\to Y$ such that in the following diagram the outer rectangle is a pullback (and a pushout). In the same diagram we also take the $(\lc,\rc)$-factorisations $f=\comp em$ and $\uni K=\mcomp{e',m'}$, and we denote by $d$ the induced morphism.
 \[
   \begin{tikzcd}
       K\arrow[r,"e'",normepi]\arrow[rr, bend left, "\uni K",normepi]\arrow[d,normmono, "k"] & I'\arrow[r,"m'"]\arrow[d, dashed, "d"] & \refl K\arrow[d, "\cuni{}",normmono]
       \\
       X\arrow[r, normepi, "e"']\arrow[rr, bend right, "f"', normepi,bend right] & I\arrow[r, "m"'] & Y
   \end{tikzcd}
   \]
   The uniqueness of the $(\lc,\rc)$-factorisation of $\uni K$, together with the pullback-stability of both $\lc$ and $\rc$, ensures that the right-hand square is a pullback and hence $d$ is a \npnormmono{}. Since the \npkernel{} of $e'$ is $\TNU K$, it follows by \zcref{kernel-of-parall} that the \npkernel{} of $e$ is $\mcomp{\TNU K,k}$.
\end{proof}
\begin{remark}
    Let $(\lc,\rc)$ be the factorisation system associated to a \car{} \ztt{} \ttt{} on $\cat C$. Denote by $F\colon \cat C\to \TFC$ the reflector and by $\TFU{}\colon \id{\cat C}\tto \TFF$ the unit of the reflection. We claim that the functor $\TFF$ inverts all maps in $\lc$, and hence  $\lc$ is contained in the left class of the `classical' prefactorisation system associated with the reflective subcategory $\TFC$ (see the beginning of this section and \cite{CASSIDY}) 
    
    Let $f\colon X\nepito Y$ be any map in $\lc$, and let $a\colon A\nmonoto X$ be its \npkernel{}. By construction, $A\in\TNC$. We show that the map $Ff\colon FX\to FY$ is the \npcokernel{} of $Fa$ in $\TFC$. 
    
    First recall that $\tobj\subseteq\TFC$. Now, let $b\colon \TFP X\to B$ be a map in $\TFC$ such that $\mcomp{Fa,b}$ is \nptrivial{}. Then the composite $\mcomp{a,\TFU X, b}=\mcomp{\TFU A, \TFP a,b}$ is also \nptrivial{}. Therefore, there exists a unique map $u\colon Y\to B$ such that $\comp f u=\mcomp{\TFU X,b}$. By reflectivity, there exists a unique $v\colon \TFP Y\to B$ such that $u=\mcomp{\TFU Y,v}$.
    This $v$ clearly  satisfies $\mcomp{\TFP f, v}=b$. Finally observe that $\TFP f$ is an epimorphism since $f$ is and $\TFF$ is left-adjoint. This concludes the proof.
    \[
    \begin{tikzcd}[column sep =4em, bezier bounding box]
        A\rar[normmono, "a"]\dar[normepi, "\TFU A"] &X\rar[normepi, "f"] \dar[normepi, "\TFU X"] &Y\dar[normepi, "\TFU Y"]\arrow[ddl,controls={+(7em,-2em) and +(8em,-1em)}, "u"near start, dashed, start anchor=east]
        \\
        \TFP A\rar["\TFP a"]&\TFP X\rar["\TFP f"] \dar["b"']&\TFP Y\arrow[dl,"v",curve={height=-.4em},end anchor=north east, start anchor={[xshift=.5em]},dashed]
        \\
        &B
    \end{tikzcd}
    \]
    Since $A\in\TNC$, it follows that $\TFP A\in\tobj$, and thus $\TFP a$ is \nptrivial{}. As a result, $\TFP f$ is an isomorphism, since it is the \npcokernel{} of the \nptrivial{} map $\TFP a$.
\end{remark}
}
\section{Hereditary torsion theories}
\label{sec-hereditary}
{
In this section we focus on characterisations of \emph{hereditary} torsion theories. Naturally, in the context of \normal{} categories, the appropriate notion of heredity is not grounded in monomorphisms, but rather in the right part of the factorisation system relevant to this setting -- namely, morphisms with a \nptrivial{} \npkernel{}. We therefore give the following definition 
\begin{definition}
A \ztt{} $(\TNC,\TFC)$ on $\cat C$ is \emph{\zher{}} if for every morphism $X\tkerto Y$ with \nptrivial{} \npkernel{} such that $Y\in\TNC$, the object $X$ is also in $\TNC$.
In this case, we say that $\TFC$ is a \zher{} \ztf{} subcategory of $\cat C$.
\end{definition}
We can also recover a characterisation of \zher{} \ztf{} subcategories from \cite{BOURN} with a similar proof.
\begin{proposition}
\label{hered-charact}
    Let $\TFC$ be a \ztf{} subcategory of $\cat C$, and let $\TFP{}\colon \cat C\to \TFC$ be the left adjoin to the inclusion $\TFC\hookrightarrow\cat C$. Then $\TFC$ is \zher{} if and only if $\TFP{}$ preserves maps with \nptrivial{} \npkernel{}.
\end{proposition}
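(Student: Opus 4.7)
My plan is to prove the two directions separately, relying on the characterisations of $\TNC$ and $\TFC$ via the reflection $\TFU{}$ (\zcref{TF-is-ref} and its dual), and on the pullback-stability of $\tobj$-normal epis along $\tobj$-normal monos (\zcref{wnorm-3}) for the harder direction.

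For the implication ``$\TFP{}$ preserves trivial kernels $\Rightarrow$ $\TFC$ hereditary'', take $f\colon X\tkerto Y$ with $Y\in\TNC$. Since $\TFP Y\in\tobj$, the map $\TFP f$ is itself $\tobj$-trivial. By hypothesis, its kernel $(K,k\colon K\nmonoto\TFP X)$ is trivial, so $K\in\tobj$. The triviality of $\TFP f$ means $\mcomp{\id{\TFP X},\TFP f}$ is trivial, so by the universal property of $k$ the identity factors through $k$. This makes $k$ a split epimorphism, and combined with $k$ being a monomorphism, $k$ is an isomorphism. Thus $\TFP X\iso K\in\tobj$ and $X\in\TNC$.

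For the converse, assume $\TFC$ is hereditary and let $f\colon X\tkerto Y$ have trivial kernel, with $(K,k\colon K\nmonoto\TFP X)$ the kernel of $\TFP f$; the goal is to show $K\in\tobj$. The key step is to pull $\TFU X$ back along $k$, obtaining
\[
\begin{tikzcd}
K'\rar["k'",normmono]\dar["p"',normepi]&X\dar["\TFU X",normepi]
\\
K\rar["k"',normmono]&\TFP X,
\end{tikzcd}
\]
in which $p$ is a $\tobj$-normal epi by \zcref{wnorm-3} and $k'$ a $\tobj$-normal mono. Naturality of $\TFU{}$ gives $\mcomp{k',f,\TFU Y}=\mcomp{p,k,\TFP f}$, which is $\tobj$-trivial, so $\mcomp{k',f}$ factors through $\TNU Y\colon\TNP Y\nmonoto Y$ (the kernel of $\TFU Y$) via a unique $\alpha\colon K'\to\TNP Y$. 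Because $\NMono\subseteq\TKer$ and $\TKer$ is closed under composition (a straightforward consequence of condition (v) of \zcref{tker-charact}), $\mcomp{k',f}=\mcomp{\alpha,\TNU Y}$ lies in $\TKer$; since $\TNU Y$ is itself in $\TKer$, $\alpha$ does too.

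Applying the hereditary hypothesis to $\alpha\colon K'\tkerto\TNP Y$ with $\TNP Y\in\TNC$ yields $K'\in\TNC$. Then $p\colon K'\nepito K$ together with the dual of \zcref{other-TF-properties} (closure of $\TNC$ under $\tobj$-normal epi quotients) gives $K\in\TNC$. On the other hand, the $\tobj$-normal mono $k$ has codomain $\TFP X\in\TFC$, so $K\in\TFC$ by \zcref{other-TF-properties}. Combining, $\TFP K\in\tobj$ and $\TFU K$ is an isomorphism, so $K\iso\TFP K\in\tobj$, and therefore $\TFP f$ has trivial kernel. The main obstacle is the pullback construction in the converse direction: one must transport the trivial-kernel property of $f$ through the pullback along $\TFU X$ down to the auxiliary map $\alpha$ into $\TNP Y$ in a way that makes the hereditary hypothesis directly applicable. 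Once $\alpha\in\TKer$ is established, the remaining steps are routine applications of the closure properties of $\TNC$ and $\TFC$ already developed in \zcref{sec-T-and-TF}.
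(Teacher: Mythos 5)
Your proof is correct, and for the harder direction it follows a genuinely different route from the paper's. The easy implication (preservation of \nptrivial{} \npkernel{}s implies heredity) is essentially the paper's argument in a different guise: the paper composes $\TFU X$ with $\TFP f$ and cancels via condition (v) of \zcref{tker-charact}, whereas you note that $\TFP f$ is \nptrivial{} and deduce that its trivial \npkernel{} splits $\id{\TFP X}$; both land on $\TFP X\in\tobj$. For the converse, the paper works ``upstairs'': it pulls back the torsion part $\TNU Y\colon \TNP Y\nmonoto Y$ along $f$, uses \zcref{kernel-of-parall} to see that the induced map $K\to\TNP Y$ still has \nptrivial{} \npkernel{}, applies heredity to get $K\in\TNC$, and then invokes \zcref{ses-morphism} together with the uniqueness of $(\TNC,\TFC)$-presentations (\zcref{TF-is-ref}) to identify the \npcokernel{} of $K\nmonoto X$ with $\TFP X$ and the induced map with $\TFP f$. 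You instead work ``downstairs'': you take the \npkernel{} $k$ of $\TFP f$, pull $\TFU X$ back along it (using \zcref{wnorm-3} for $p$ and the pullback-stability of \npnormmono{}s for $k'$), produce the comparison $\alpha\colon K'\to\TNP Y$, verify $\alpha\in\TKer$ by the cancellation properties of $\TKer$, apply heredity to obtain $K'\in\TNC$, and finally trap $K$ in $\TNC\cap\TFC$ using \zcref{other-TF-properties} and its dual. Both arguments hinge on exactly one pullback along a \npnormmono{} plus the heredity hypothesis; the paper's version buys a cleaner identification of $\TFP f$ itself (no need for cancellation in $\TKer$ nor for the torsion--torsion-free squeeze on $K$), while yours avoids \zcref{ses-morphism} and the uniqueness of presentations. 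The one point worth spelling out in your write-up is that closure of $\TKer$ under composition and left cancellation requires knowing that the relevant \npkernel{}s exist; this is guaranteed by the mono-coreflectivity of $\tobj$ (condition (v) of \zcref{tker-charact} forces the coreflection of the domain to be the \npkernel{}), or, more directly, by the fact that $\TKer$ is the right class of the factorisation system of \zcref{fact-sys}.
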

\begin{proof}
    Let $\TNC$ be the \ztn{} subcategory associated to $\TFC$. Denote by $\TFU{}$ the unit of the reflection of $\cat C$ into $\TFC$, and, for every object $X\in\cat C$, let $\TNU X\colon \TNP X\to X$ be the \npkernel{} of $\TFU X$. 
    
    Suppose $\TFC$ is \zher{}. For any morphism $f\colon X\tkerto Y$ with \nptrivial{} \npkernel{}, consider the following diagram,
    \[
    \begin{tikzcd}
        K\arrow[r,normmono, "k"] \arrow[d, "f_0"]& X \arrow[r,normepi, "q"]\arrow[d, tker, "f"]& Q\arrow[d, "f_1"]
        \\
        \TNP Y\arrow[r,normmono, "\TNU Y"'] & Y\arrow[r, "\TFU Y"',normepi] &\TFP Y,
    \end{tikzcd}
    \]
    where the left-hand square is a pullback, $(Q,q)$ is the \npcokernel{} of $k$ and $f_1$ is the induced map. By \zcref{ses-morphism}, $f_1$ has \nptrivial{} \npkernel{}, and hence $Q\in\TFC$ by the general properties of \ztf{} subcategories (see \zcref{other-TF-properties}). On the other hand, $f_0$ also has \nptrivial{} \npkernel{} because it is a pullback of $f$, and thus $K\in\TNC{}$ by the \zherty{} hypothesis. By \zcref{TF-is-ref}, we  conclude that the upper sequence in the above diagram is uniquely isomorphic to the canonical $(\TNC,\TFC)$-presentation \(
    \begin{tikzcd}[cramped, column sep =2em]
        \TNP X\arrow[r,normmono, "\TNU X"] & X\arrow[r, "\TFU X",normepi] &\TFP X
    \end{tikzcd}
    \)
    of $X$, and, in particular $\TFP f\iso f_1$ has \nptrivial{} \npkernel{}.

    Conversely, suppose $\TFP{}$ preserves maps with \nptrivial{} \npkernel{}. We know an object $Y$ lies in $\TNC$ if and only if $\TFU Y$ is \nptrivial{} (see \zcref{char-tor-obj}). Let $f\colon X\tkerto Y$ be a map in $\TKer$, with $Y\in\TNC$. Then the map $\mcomp{f,\TFU Y}=\mcomp{\TFU X, \TFP f}$ is \nptrivial{} because $\TFU Y$ is. Since $\TFP f$ has \nptrivial{} \npkernel{}, we conclude that $\TFU X$ is \nptrivial{} (use \zcref{tker-charact}) and thus $X\in\TNC$.
\end{proof}
The results  from \zcref{sec-tt-and-CO} about \nclosop{} and \ztt{} can be specialised to \zher{} \ztt{} via the following notion (see \cite{BORCEUX94a,BOURN}).
\begin{definition}
   Let $\monoclass$ be a \wscm{} on some category. An idempotent closure operator $\clo$ on $\monoclass$ is said to be \emph{hereditary} if the following condition holds for all $a\colon A\monoto X$ in $\monoclass$ and all arrows $f\colon Y\to X$.
   \begin{enumerate}
   \setcounter{enumi}{5}
       \item Heredity: $\cl Y {\preim f a}\same\preim f{\cl X a}$.
   \end{enumerate}
\end{definition}
\begin{remark}
    Hereditary closure operators are always weakly hereditary, as shown in \cite{BOURN}.
\end{remark}
We then have the following characterisation of \zher{} \ztts{}.
\begin{proposition}
    The bijective correspondence of \zcref{TT-and-CO} specialises to a bijective correspondence between \zher{} \ztts{} on $\cat C$ and hereditary \nclosop{}s on $\cat C$.
\end{proposition}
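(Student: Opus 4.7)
The plan is to verify that the bijection of \zcref{TT-and-CO} restricts to a bijection between \zher{} \ztts{} on $\cat C$ and hereditary \nclosop{}s on $\cat C$. Writing $(\TNC,\TFC)$ for the \ztt{} associated to a \nclosop{} $\clo$, I will prove separately that $\clo$ is hereditary if and only if $(\TNC,\TFC)$ is \zher{}. The main bridge is \zcref{hered-charact}, which will allow me to rephrase \zherty{} of the torsion theory as the condition that the reflector $\TFF\colon\cat C\to\TFC$ preserves maps in $\TKer$.

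For the implication \emph{hereditary closure operator $\Rightarrow$ \zher{} \ztt}, I will fix a map $f\colon X\tkerto Y$ in $\TKer$ with $Y\in\TNC$ and aim to show that $X\in\TNC$. The description \ref{T-F-clo} of $\TNC$ translates the hypothesis $Y\in\TNC$ into the $C$-density of $\cuni Y$, and the conclusion $X\in\TNC$ into the $C$-density of $\cuni X$. By \zcref{monocoref}, the pullback $\preim f{\cuni Y}$ is the \npkernel{} of $f$; since $f\in\TKer$, this kernel is a $\tobj$-coreflection of $X$ (\zcref{tker-charact}), hence isomorphic to $\cuni X$. Heredity of $\clo$ then yields
\[
\cl X{\cuni X}\same\cl X{\preim f{\cuni Y}}\same\preim f{\cl Y{\cuni Y}}\same\preim f{\id Y}\same\id X,
\]
as required.

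For the converse, assume that $(\TNC,\TFC)$ is \zher{}; fix a \npnormmono{} $a\colon A\nmonoto X$ and an arbitrary morphism $f\colon Y\to X$, and write $q_a\colon X\nepito Q_a$ and $q'\colon Y\nepito Q'$ for the \npcokernel{}s of $a$ and $\preim f a$ respectively, with $g\colon Q'\to Q_a$ the unique map such that $\comp{q'}g=\comp f{q_a}$. Applying \zcref{ses-morphism} to the commutative diagram with \npexact{} rows
\[
\begin{tikzcd}
\preim f A\arrow[r,normmono]\arrow[d] & Y\arrow[r,"q'",normepi]\arrow[d,"f"'] & Q'\arrow[d,"g"]
\\
A\arrow[r,"a"',normmono] & X\arrow[r,"q_a"',normepi] & Q_a
\end{tikzcd}
\]
(whose left square is a pullback by construction) will give $g\in\TKer$. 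By \zcref{hered-charact}, \zherty{} of the \ztt{} then yields $\TFP g\in\TKer$, and a second application of \zcref{ses-morphism}, this time to the evident morphism between the \npexact{} sequences $\TNP{Q'}\nmonoto Q'\nepito\TFP{Q'}$ and $\TNP{Q_a}\nmonoto Q_a\nepito\TFP{Q_a}$, will deliver $\TNU{Q'}\same\preim g{\TNU{Q_a}}$. By the definition of the closure recalled in the proof of \zcref{TT-and-CO} and by pullback pasting, I will then obtain
\[
\preim f{\cl X a}\same\preim{\comp f{q_a}}{\TNU{Q_a}}\same\preim{\comp{q'}g}{\TNU{Q_a}}\same\preim{q'}{\preim g{\TNU{Q_a}}}\same\preim{q'}{\TNU{Q'}}\same\cl Y{\preim f a},
\]
establishing heredity of $\clo$.

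The main delicate point is this latter implication, which hinges on invoking \zcref{ses-morphism} twice in an essential way: the first application extracts the key fact $g\in\TKer$ directly from the pullback configuration of cokernels, while the second converts the algebraic information $\TFP g\in\TKer$ -- supplied by \zherty{} via \zcref{hered-charact} -- into the geometric equality $\TNU{Q'}\same\preim g{\TNU{Q_a}}$ that the closure formula requires. Once both applications of the (non-pointed) Noether-type lemma are in place, what remains is just pullback pasting.
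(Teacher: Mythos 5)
Your proof is correct and takes essentially the same route as the paper's: the paper only sketches the argument by reference to \cite[Proposition~5.4]{BOURN}, listing as its key ingredients precisely the tools you deploy, namely \zcref{hered-charact}, the detection of maps with \nptrivial{} \npkernel{} in morphisms of \npexact{} sequences via \zcref{ses-morphism}, and the pullback description of $\tobj$-coreflections from \zcref{monocoref,tker-charact}. Your writeup simply carries out that adaptation in full detail.
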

\begin{proof}
    The proof of this proposition follows the one of \cite[ Proposition 5.4]{BOURN}, with monomorphisms replaced with maps with \nptrivial{} \npkernel{}, and making use of the characterisation in \zcref{hered-charact}. The relevant properties of monomorphisms in homological categories are substituted, in the context of \wnormal{} categories, by the following properties of maps with \nptrivial{} \npkernel{}:
    \begin{itemize}[-]
        \item they are the right class of a factorisation system, with \npnormepi{}s on the left;
        \item they may be detected in morphisms of \npexact{} sequences as in \zcref{ses-morphism};
        \item for any morphism $f\colon X\tkerto Y$ with \nptrivial{} \npkernel{}, the canonical square induced by the $\tobj$-coreflection
        \[
         \begin{tikzcd}
         \corefl X\arrow[r, "\corefl f"] \arrow[d, "\cuni X"',normmono]&\corefl Y\arrow[d, "\cuni Y",normmono]
         \\
         X\arrow[r, "f"', tker]& Y
         \end{tikzcd}
        \]
        is a pullback (by \zcref{monocoref,tker-charact}).\qedhere
    \end{itemize}
\end{proof}
\begin{remark}
    The bijective correspondence of \zcref{clos-op-on-mono} can be specialised to a bijective correspondence between the following classes.
    \begin{enumerate}
        \item\label{HcloM} Hereditary closure operators on $\monoclass$;
        \item hereditary \nclosop{}s on $\monocat$;
        \item\label{HttM} \zher{} \ztts{} on $\monocat$.
    \end{enumerate}
    We just prove the correspondence between \ref*{HcloM} and \ref*{HttM}, the rest follows from \zcref{clos-op-on-mono}.

    Given a hereditary closure operator $\clo$ on $\cat M$, the corresponding \ztn{} subcategory of $\monocat$ is given by the $\clo$-dense monomorphisms of $\monoclass$. If $(u,v)\colon a\tkerto b$ is a morphism in $\monocat$ with \nptrivial{} \npkernel{}, with $a\colon A\smonoto X$ and $b\colon B\smonoto Y$ in $\monoclass$ and $b$ being $\clo$-dense, then we have that $a\same\preim u b$ by the characterisation of maps with \nptrivial{} \npkernel{} recalled in \zcref{clos-op-on-mono}. Therefore we obtain
    \[
    \cl X a\same \cl X{\preim u a}\same\preim u{\cl Y b }\same \preim u {\id Y}\same\id X,
    \]
    i.e.\ $a$ is $\clo$-dense.

    Vice versa, let $(\cat T,\cat F)$ be a \zher{} \ztt{} and let $\clo$ be the corresponding closure operator on $\monoclass$, so that $\cat T$ is the class of $\clo$-dense monomorphisms in $\monoclass$. Let $a\colon A\smonoto X$ be in $\monoclass$ and let $f\colon Y\to X$ be any morphism in $\cat E$. We aim to show that $\preim f{\cl Xa}\cont \cl Y{\preim f a}$. Consider the following diagram where both squares are pullbacks.
    \[
    \begin{tikzcd}[column sep=6em]
        \cdot\dar \arrow[r, "j",smono]\arrow[rr, bend left, "\preim f a",smono, yshift=.2em]&P\dar\arrow[r, "\preim f {\cl Xa}",smono] &Y\dar["f"]
        \\
        A \arrow[r, "i"',smono]\arrow[rr, bend right, "a"',smono, yshift=-.1em]&\cdot\arrow[r, "\cl Xa"',smono] &X
    \end{tikzcd}
    \]
Since the left-hand square is a pullback, it determines a morphims $j\tkerto i$ with \nptrivial{} \npkernel{}, and since $i$ is $\clo$-dense by weak heredity, it follows that $j$ is also $\clo$-dense by \zherty{}. We also have that 
\[
\mcomp{\cl P j, \preim f{\cl X A}}\cont \cl Y{\preim f a}.
\]
This follows from the general observation that for any commutative diagram 
\[
\begin{tikzcd}[ row sep =1.7em, column sep = 5em]
    M\arrow[r, "m_1",smono] \arrow[dr, "m_2"', bend right,smono] &X_1
   \\
   &X_2\arrow[u, "x"',smono]
\end{tikzcd}
\]
with $m_1,m_2,s\in\monoclass$, one has that the closure of $M$ in the `smaller space' $X_2$ is smaller than the closure of $M$ in the `larger space' $X_1$, that is, $\mcomp{\cl {X_2}{m_2}, x}\le \cl {X_1}{m_1} $; notice however that $\mcomp{\cl {X_2}{m_2}, x}$ might not be in $\monoclass$. Now, since $\cl P j$ is an isomorphism, the claim follows.
\end{remark}
}
\section{Examples}
\label{sec-examples}
\label{no-semi-left}
In this section, we examine examples of (non-pointed) torsion theories (in the sense of \zcref{ztt}) within (non-pointed) \wpnormal{} and \pnormal{} categories. Interestingly, several torsion theories already present in the literature happen to be situated within (semi-)prenormal categories, and hence automatically enjoy all the properties established in this paper; we list some of these torsion theories in the following table, along with the corresponding references, and refer the reader to \cite{PN} for details on the \wpnormal{}-category structure.
{
\begin{center}
    \begin{tblr}{colspec={|Q[5.8em,valign=m,halign=c]|Q[5.8em,valign=m,halign=c]|Q[5.8em,valign=m,halign=c]|Q[5.8em,valign=m,halign=c]|Q[valign=m,halign=c]|}, row{1}={gray!10!white}}
    \hline
        Category & $\tobj$ & $\tobj$-torsion part &$\tobj$-tor.-free part&Ref.
        \\\hline
        Abelian, semi-abelian, homological or normal cats. &0&\SetCell[c=2]{c}{Any pointed torsion theory} 
        && /
        \\\hline
        Comm. monoids &$0$&Abelian groups &Reduced comm. mons.&\cite{FACCHINI21}
        \\\hline
        Preordered groups &0&Discrete preorders &Partial orders&\cite{MICHEL}
        \\\hline
        Preordered groups &Discrete preorders  &Equiv. relations &Partial orders &\cite{MICHEL}
        \\\hline
        Embeddings of top.\ spaces & \hspace*{0pt}Homeomorphisms & Closed embeddings & Dense embeddings& \cite{GRANDIS20}
        \\\hline
        Monos of top.\ spaces  & \hspace*{0pt} Homeomorphisms & Embeddings & Bijective cont.\ maps & \cite{BORCEUX94a}
        \\\hline
    \end{tblr}
\end{center}
}
\begin{remark}
\label{tt-not-sle}
The examples in the last two rows are both special instances of the correspondence described in \zcref{sec-tt-and-CO} and in \cite{GRANDIS20}; they correspond, respectively, to the standard topological closure of a subspace, and to the closure operator that assigns to an injective continuous function $a\colon A\monoto X$ the inclusion of the subspace generated by the image of $a$ in $X$. In particular, this last closure operator is hereditary (`universal' in \cite{BORCEUX94a}), and hence the associated \ztt{}  is \zher{}. Moreover, in the second-to-last example, note that the torsion-free subcategory of closed subspaces is not semi-left-exact, meaning that  reflections are not necessarily stable along maps in the subcategory. For instance, consider the following pullback in the category in question.
\[\begin{matrix}
    \begin{tikzcd}
    i\dar\rar &a\dar
    \\
    b\rar &\id X
\end{tikzcd}
\end{matrix}
\quad\quad \begin{pmatrix}\iff \begin{tikzcd}[ampersand replacement =\&]
    \varnothing\dar[hookrightarrow]\rar[hookrightarrow] \&A\dar[hookrightarrow]
    \\
    B\rar[hookrightarrow] \& X
\end{tikzcd}\end{pmatrix}
\]
Here, $X$ is the closed unit interval $[0,1]$ with its usual topology; $i\colon\varnothing\to X$ is the initial map; $b\colon B\to X$ is the embedding of the singleton $B=\{0\}\subseteq X$; $a\colon A\to X$ is the embedding of the open unit interval $A=(0,1)\subseteq X$; all maps in the pullback are induced by the identity on $X$. Clearly $a\to\id X$ is the reflection of $a$ into the subcategory of closed embeddings (since the closure of $A$ is all of $X$), and both $b$ and $\id X$ lie in this subcategory, as $X$ and $B$ are closed subspaces of $X$. However $i\to b$ is not the reflection of $i$ (which is instead given by $\id i$).
\end{remark}
Finally, we devote the rest of the section to introducing new examples of torsion theories in various pointed and non-pointed \wpnormal{} categories.
\subsection*{Torsion theories for commutative $\Omega$-monoids}
\newcommand{\forom}{U_\Omega}
For a given algebraic theory $\Omega=(\Sigma,\Tau)$, consisting of a set $\Sigma$ of operations of finite positive arities and a set $T$ of equational axioms, a commutative $\Omega$-monoid $M$ is a commutative monoid equipped with an $\Omega$-algebra structure such that for every $n$-ary operation $\sigma$ in $\Omega$, the function $\sigma\colon M^n\to M$ is monoid morphism in each variable separately. The category $\catOm$ of $\Omega$-monoids is prenormal, as shown in \cite{PN}, where one also finds explicit characterisations of normal epimorphisms. A normal monomorphism in $\catOm$ is, up to isomorphism, the inclusion of  a \emph{normal $\Omega$-submonoid} $A\inclusion M$, that is an $\Omega$-submonoid having the following closure properties: \begin{enumerate*}
        \item for all $x,y\in M$, if $x,x+y\in A$, then $y\in A$;
        \item for every $n$-ary operation $\sigma\in\Sigma$ and every $x=(x_1,\dots,x_n)\in M^n$, $\sigma(x)\in A$ whenever one of the $x_i\in A$.
\end{enumerate*}
We show that, under suitable hypotheses, torsion theories on commutative monoids can induce torsion theories on commutative $\Omega$-monoids. 
\begin{proposition}
\label{tts-on-om-mon}
    Let \ttt{} be a torsion theory on the category $\catCMon$ of commutative monoids. Let $\TNP{}$ denote the coreflector into the subcategory of torsion objects, and let $\TNU{}$ denote the counit of the coreflection. 
    
    Consider the obvious forgetful functor $\forom\colon \catOm\to\catCMon$ and assume that, for all commutative $\Omega$-monoids $M$, the normal submonoid $\TNP{\forom M}\subseteq\forom M$ is also a normal $\Omega$-submonoid of $M$. 
    
    Then the full subcategories of $\catOm$
   \[
   \TNC_\Omega =\{M\in\catOm\,\vert\, \forom M\in\TNC\}\hspace{.8em}\textnormal{and}\hspace{.8em}
        \TFC_\Omega =\{M\in\catOm\,\vert\, \forom M\in\TFC\}
   \]
   form a torsion theory on $\catOm$.
\end{proposition}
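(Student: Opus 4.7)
The plan is to verify Definition \zcref{ztt} for the pair $(\TNC_\Omega,\TFC_\Omega)$ by transporting the canonical presentation from $\catCMon$ back to $\catOm$ through $\forom$. I would rely on three ingredients: the faithfulness of $\forom$ together with its preservation and reflection of the zero object; the explicit description, recalled just above, of normal monomorphisms in $\catOm$ as inclusions of normal $\Omega$-submonoids; and \zcref{pb-nmono}~(3), which ensures that a normal monomorphism is the $\tobj$-kernel of its own $\tobj$-cokernel.

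For the first axiom, let $f\colon A\to B$ be a morphism in $\catOm$ with $A\in \TNC_\Omega$ and $B\in\TFC_\Omega$. Then $\forom A\in\TNC$ and $\forom B\in\TFC$, so $\forom f$ is the zero morphism in $\catCMon$ because $(\TNC,\TFC)$ is a (pointed) torsion theory there. Since $\forom$ is faithful and reflects the zero morphism, $f$ itself is $\tobj$-trivial in $\catOm$.

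For the second axiom, fix $M\in\catOm$ and start from the canonical $(\TNC,\TFC)$-presentation of $\forom M$ in $\catCMon$:
\[
\TNP{\forom M}\nmonoto \forom M\nepito \TFP{\forom M}.
\]
By hypothesis $\TNP{\forom M}$ underlies a normal $\Omega$-submonoid of $M$, yielding an $\Omega$-monoid $T_\Omega M\in\TNC_\Omega$ together with a normal monomorphism $T_\Omega M\nmonoto M$ in $\catOm$. Taking its $\tobj$-cokernel $M\nepito F_\Omega M$ in $\catOm$ and invoking \zcref{pb-nmono}~(3) produces a $\tobj$-exact sequence $T_\Omega M\to M\to F_\Omega M$ in $\catOm$; what remains is to establish $F_\Omega M\in\TFC_\Omega$, i.e.\ that $\forom F_\Omega M\cong \TFP{\forom M}$.

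The step I expect to be the main obstacle is precisely this last identification: I need $\forom$ to preserve the $\tobj$-cokernel of the inclusion $T_\Omega M\nmonoto M$. I would argue that the set-level quotient of $M$ by $T_\Omega M$ is governed by a Mal'cev-type congruence depending only on the commutative monoid operation and on the underlying subset, so it coincides with the congruence used to compute $\TFP{\forom M}=\forom M/\TNP{\forom M}$ in $\catCMon$; the normality of $T_\Omega M$ as an $\Omega$-submonoid is exactly what guarantees that the $\Omega$-operations of $M$ descend to the quotient. Combined with the explicit description of normal epimorphisms in $\catOm$ from \cite{PN}, this gives $\forom F_\Omega M\cong \TFP{\forom M}\in\TFC$, and hence $F_\Omega M\in\TFC_\Omega$, completing the verification of both axioms.
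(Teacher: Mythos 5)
Your direct verification is correct and takes essentially the route the paper itself indicates: its proof is only a two-line sketch observing that the coreflection lifts to $\catOm$ and that the relevant exact sequences in $\catOm$ have the same form as in $\catCMon$, which is exactly what you establish. Your congruence argument for why $U_\Omega$ preserves the cokernel of $T_\Omega M\nmonoto M$ (normality condition (2) for $\Omega$-submonoids making the monoid-level congruence $x\sim y\iff x+a=y+b$ an $\Omega$-congruence) is the substantive point the paper leaves implicit, and it is sound.
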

\begin{proof}
    This can be easily verified directly or using \zcref{tor-charact}. In any case, the key points are that $\TNP{}$ and $\TNU{}$ induce a normal-mono-coreflection of $\catOm$ into $\TNC_\Omega$, and that the relevant exact sequences in $\catOm$ have the same form as in $\catCMon$.
\end{proof}
\begin{example}
\label{TF-om-mon}
    One can easily prove that the following torsion theories on $\catCMon$ satisfy the hypotheses of \zcref{tts-on-om-mon} for any $\Omega$.
    \begin{enumerate}
        \item \emph{Torsion and torsion-free commutative monoids.} An element $x$ of a commutative is torsion if $nx=0$ for some positive integer $n$. A commutative monoid is torsion (respectively, torsion-free) if all (respectively, none) of its non-zero elements are torsion. For any commutative monoid $M$, one can consider the submonoid $\TNP M$ of its torsion elements. The cokernel $\TFU M\colon M\to\TFP M$ of $\TNP M\inclusion M$ is simply the quotient of $M$ by the relation  
        \[
        x\sim y \textnormal{ if } x+a=y+b \textnormal{ for some }a,b\in\TNP M.
        \]
        Clearly $\TNP M$ is torsion and $\TFP M$ is torsion-free.
        \item \emph{Abelian groups and reduced commutative monoids}. Abelian groups can be viewed as commutative monoids where every element is invertible. A commutative monoid is reduced (or pure, see \cite{Messora}) if its only invertible element is its unit. Abelian groups and reduced commutative monoids form a torsion theory on $\catCMon$ (\cite{FACCHINI21}).
    \end{enumerate}
\end{example}
\subsection*{A torsion theory for inverse monoids}
\newcommand{\catICM}{\cats{ICMon}}
\newcommand{\catECM}{\cats{ECMon}}
A commutative monoid $M$ is called an \emph{ inverse commutative monoid} if for every $x\in M$ there exists an element $x\inv\in M$, called the \emph{inverse} of $x$, such that
\begin{equation*}
\label{inv-axioms}
    xx\inv x=x \quad\textnormal{and}\quad x\inv x x\inv=x\inv.
\end{equation*}
(in this subsection we shall use multiplicative notation). The category $\catICM$ of inverse commutative monoids is (pointed) prenormal -- since it is in particular a category of $\Omega$-monoids -- but it is also prenormal with respect to the subcategory $\tobj=\catECM$ of idempotent commutative monoids, i.e., commutative monoids where every element is idempotent (see \cite{PN}). There is a functor $E\colon \catICM\to \catECM$ sending an inverse commutative monoid $M$ to the subset $E(M)$ of its idempotent elements, and acting on morphisms by restriction to idempotents. This functor is both left and right adjoint to the inclusion $\catECM\inclusion\catICM$. Recall from \cite{PN} that a  morphism $f\colon M\to N$ of inverse commutative monoids is a \npnormepi{} if and only if it is surjective and $E(f)$ is an isomorphism.

Just as the category $\catICM$ is prenormal with respect to two subcategories of trivial objects -- namely, the subcategory of zero monoids and the subcategory of idempotents commutative monoids -- we can now exhibit a subcategory of $\catICM$ which is torsion-free with respect to both.
\begin{proposition}
\label{TF-inv-mon}
Consider the full subcategory $\TFC$ of $\catICM$ consisting \emph{torsion-free inverse monoids}, that is, commutative inverse monoids $M$ such that for all $x\in M$, if $x^n=1$ for some positive integer $n$, then $x=1$ (please note again the use of multiplicative notation). The subcategory $\TFC$ is both (0-)torsion-free and \ztf{} in $\catICM$ (where $\tobj=\catECM$).
\end{proposition}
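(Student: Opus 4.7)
The plan is to exhibit, for each $M \in \catICM$, a canonical exact sequence
\[
T_0(M) \nmonoto M \nepito M/T_0(M)
\]
serving simultaneously as a $0$-torsion and a $\tobj$-torsion presentation, with torsion-free part in $\TFC$. Here $T_0(M) := \{x \in M \mid x^n = 1 \text{ for some } n \geq 1\}$ is the natural candidate for the torsion part. The argument hinges on a key lemma: $T_0(M)$ is contained in the group of units $G_1(M) := \{x \in M \mid xx^{-1} = 1\}$. Indeed, from the inverse axiom $xx^{-1}x = x$ and commutativity, an easy induction gives $x^{k+1} x^{-1} = x^k$ for $k \geq 1$; applied to $k = n-1$ with $x^n = 1$, this yields $x^{-1} = x^{n-1}$, and hence $xx^{-1} = x^n = 1$. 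This lemma implies that $T_0(M)$ is closed under inversion and convex in $M$, hence is a normal $\Omega$-submonoid whose inclusion is a $0$-normal mono.

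I would then take $\TFU M \colon M \nepito M/T_0(M)$ to be the $0$-cokernel, and verify $M/T_0(M) \in \TFC$ via the following computation: a relation $\overline{x}^n = \overline{1}$ unfolds to $x^n s = t$ with $s, t \in T_0(M)$, from which $x^N = 1$ for a suitable $N$, so $\overline{x} = \overline{1}$. For the pointed torsion theory I would set $\TNC_0 := \{T \in \catICM \mid T = T_0(T)\}$; the exact sequence above is then a $(\TNC_0, \TFC)$-presentation, and orthogonality is immediate ($x^n = 1$ forces $f(x)^n = 1$, and hence $f(x) = 1$ when the codomain lies in $\TFC$).

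For the $\tobj = \catECM$ case I would appeal to \zcref{H-K}. First I would upgrade $\TFU M$ to a $\tobj$-normal epi by showing it is the $\tobj$-cokernel of the same inclusion $T_0(M) \nmonoto M$: since $T_0(M) \subseteq G_1(M)$ is a group, any morphism from $T_0(M)$ into an object of $\catECM$ is trivial (an invertible idempotent in a semilattice must be the identity), so $\tobj$-triviality and $0$-triviality of maps out of $T_0(M)$ coincide and the two universal properties agree. Hence $\TFC$ is $\tobj$-reflective. Then I would verify $\cat K \subseteq \cat H$: for $X \in \cat K$, realised via \zcref{monocoref} as the pullback $X = \{x \in M \mid \overline{x} \in E(M/T_0(M))\}$ for some $M$, one has $T_0(M) \subseteq X$ and $T_0(M) \subseteq T_0(X)$, so every witnessing relation $x^2 s = xt$ with $s, t \in T_0(M)$ already lies in $X$ with $s, t \in T_0(X)$; therefore $\overline{x}$ is idempotent in $X/T_0(X)$ for every $x \in X$, giving $X/T_0(X) \in \catECM$ and $X \in \cat H$.

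The principal obstacle I expect is the key lemma $T_0(M) \subseteq G_1(M)$: this is what forces torsion to concentrate in a single Clifford component and aligns the $0$- and $\tobj$-notions. Without it, morphisms from $T_0(M)$ into $\catECM$-objects could be non-trivial, the two cokernel notions would diverge, and the short-circuit via \zcref{H-K} above would fail.
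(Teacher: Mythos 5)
Your proposal is correct, but it reaches the conclusion by a genuinely different route from the paper. For the pointed part, the paper simply invokes the lifting of torsion theories from commutative monoids to $\Omega$-monoids (\zcref{tts-on-om-mon} and \zcref{TF-om-mon}), whereas you reprove it directly; your key lemma $T_0(M)\subseteq G_1(M)$ is a genuine structural addition that the paper never states explicitly (though its computation $x=x^na^n=y^nb^n=y$ secretly relies on the same fact). For the $\tobj$-part the divergence is real: the paper reduces the problem via \zcref{TF-stable} to showing that the units $\TFU M$ are \npnormepi{}s, and verifies this through the concrete characterisation of \npnormepi{}s in $\catICM$ (surjectivity plus $E(\TFU M)$ an isomorphism, the latter checked by an idempotent computation). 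You instead show that the $0$-cokernel and the $\tobj$-\npcokernel{} of $T_0(M)\hookrightarrow M$ have the same universal property -- because $T_0(M)$ is a group and any group maps trivially into an idempotent commutative monoid -- and then verify $\cat K\subseteq\cat H$ by hand via \zcref{H-K}, computing the $\tobj$-kernel $X=\{x\in M\mid \overline x\in E(M/T_0(M))\}$ and checking that $X/T_0(X)$ is idempotent. Your argument is more elementary and self-contained (it bypasses both the $\Omega$-monoid machinery and the pullback-stability of units used in \zcref{TF-stable}), at the cost of an explicit computation with the kernel object; the paper's argument is shorter because it leans on the general theory, but it requires knowing the characterisation of \npnormepi{}s in $\catICM$ and the stability of pointed reflection units. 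The only points worth making explicit in your write-up are that $\tobj$-kernels exist (guaranteed since $\catECM$ is mono-coreflective and $\catICM$ is finitely complete, so \zcref{monocoref} applies) and that reflectivity itself is inherited from the pointed case, with only the \npnormepi{} property of the units being new content; neither is a gap.
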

\begin{proof}
Since $\catICM$ is a category of $\Omega$-monoids, we already know from \zcref{TF-om-mon} (where \emph{additive} notation was used) that $\cat F$ is (0-)torsion-free in $\catICM$. We have also explicitly described the $\TFC$-reflection $\TFU M\colon M\to\TFP M$ for any $M$. As $\TFU{}$ is the unit of the reflection of a \emph{pointed} torsion-free subcategory, we know that it is stable under pullbacks along \emph{arbitrary} morphisms in $\TFC$. Hence, by \zcref{TF-stable}, to prove that $\TFC$ is also \ztf{} it suffices to show that the components of $\TFU{}$ are \npnormepi{}s. This reduces to showing that $E(\TFU M)$ is an isomorphism for all inverse commutative monoids $M$. Since $E$ is a left-adjoint, it preserve regular epimorphisms. Thus, it only remains to show that $E(\TFU M)$ is a monomorphism. Suppose $\TFU M(x)=\TFU M(y)$ for some $x,y\in E(M)$. By the description of $\TFU{}$ given in \ref{TF-om-mon}, we have that $xa=yb$ for some $a,b\in M$ such that $a^n=b^n=1$ for a positive integer $n$. Since $x$ and $y$ are idempotents, we have
\[
x=x^n=x^n1=x^na^n=y^nb^n=y^n1=y,
\]
completing the proof.
\end{proof}

\subsection*{Torsion theories for groupoids}
\newcommand{\grpd}[1]{\mathbb {#1}}
\newcommand{\Ker}[1]{\mathbb K_{#1}}
\newcommand{\vgrp}{\cat X}
\newcommand{\vgrpd}{\cats G_{\cat X}}
\newcommand{\vtor}{\cat T}
\newcommand{\vtord}{\cats G^0_{\vtor}}
The category of groupoids is \pnormal{} with respect to the subcategory $\tobj$ of discrete groupoids (\cite{GRANDIS13,PN}). We recall that if $\grpd G$ is a groupoid, a normal subgroupoid $\grpd N$ of $\grpd G$ is a subgroupoid which is both wide ($\grpd N$ contains all the objects of $\grpd G$) and closed under conjugation (if $h\in\grpd N$ and $g\in\grpd G$, then $\mcomp{g,h,g\inv}$ is in $\grpd N$ whenever it is defined). Every \npnormmono{} is, up to isomorphism, an inclusion of a normal subgroupoid. The kernel of a functor $F\colon \grpd G\to\grpd H$ -- which we denote by $\Ker F$ -- is the wide subgroupoid consisting of those morphisms in $\grpd G$  whose image under $F$ is an identity morphism. A functor $F\colon \grpd G\to\grpd H$ is a \npnormepi{} if and only if it is strictly surjective on arrows and whenever $F(g)=f(g')$ for some $g,g'\in\grpd G$, it follows that $\mcomp{g,u}=\mcomp{u',g'}$ for some $u,u'\in\Ker F$.

A groupoid $\grpd A$ is \emph{abelian} if for all objects $X\in\grpd A$, the automorphism group $\grpd A(X,X)$ is abelian. In particular, abelian groupoids, in this sense, are precisely the abelian objects of the category of groupoids (\cite{BOURN02}). Note that every discrete groupoid is abelian, and hence one can consider \npkernel{}s, \npcokernel{}s,  \npnormmono{}s and \npnormepi{}s in the category of abelian groupoids and functors. One can easily verify that these are characterised in the same way as in the category of groupoids.

For the remainder of this subsection, let $\vgrp$ denote either the category of groups or the category of abelian groups. Accordingly, we write $\vgrpd$ for the category of groupoids if $\vgrp$ is the category of groups, and for the category of abelian groupoids when $\vgrp$ is the category of abelian groups. We have the following proposition.
\begin{proposition}
    Let $\cat T$ be a torsion subcategory of $\vgrp$, and let $\vtord$ be the full subcategory of $\vgrpd$ consisting of those groupoids $\grpd G$ such that:
    \begin{itemize}
        \item for all distinct objects $X,Y\in\grpd G$, the hom-set $\grpd G(X,Y)$ is empty; 
        \item for every object $X\in\grpd G$, the endomorphism group $\grpd G(X,X)$ lies in $\vtor$.
    \end{itemize}
    Then $\vtord$ is a \ztn{} subcategory of $\vgrpd$. In particular, every torsion theory on the category of (abelian) groups yields a \ztt{} on the category of (abelian) groupoids.
\end{proposition}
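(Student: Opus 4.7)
The plan is to apply \zcref{tor-charact}: it suffices to verify that $\vtord$ is $\tobj$-coreflective in $\vgrpd$ and closed under $\tobj$-extensions.

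For $\tobj$-coreflectivity, I would associate to each $\grpd G\in\vgrpd$ the wide subgroupoid $\tau(\grpd G)\subseteq\grpd G$ with $\tau(\grpd G)(X,Y)=\varnothing$ for $X\neq Y$ and with $\tau(\grpd G)(X,X)$ equal to the $\vtor$-torsion subgroup of the vertex group $\grpd G(X,X)$. This subgroupoid is normal in $\grpd G$: widness is built in, and closure under conjugation follows since, for any $\phi\colon X\to Y$ in $\grpd G$, the map $h\mapsto\mcomp{\phi,h,\phi\inv}$ is a group isomorphism $\grpd G(Y,Y)\to\grpd G(X,X)$ and group isomorphisms restrict to the respective $\vtor$-torsion subgroups. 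Hence the inclusion $\tau(\grpd G)\hookrightarrow\grpd G$ is a $\tobj$-normal mono, and one checks it is the required coreflection: any functor $F\colon\grpd H\to\grpd G$ with $\grpd H\in\vtord$ acts on arrows only via the group homomorphisms $\grpd H(X,X)\to\grpd G(F(X),F(X))$ between vertex groups, and functoriality of the $\vtor$-coreflection on $\vgrp$ ensures the image lies in the $\vtor$-torsion subgroup of $\grpd G(F(X),F(X))$, yielding the desired unique factorisation through $\tau(\grpd G)$.

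For closure under $\tobj$-extensions, I consider a $\tobj$-exact sequence $\grpd A\overset{f}{\nmonoto}\grpd X\overset{g}{\nepito}\grpd B$ with $\grpd A,\grpd B\in\vtord$. Up to isomorphism, $f$ realises $\grpd A$ as a normal subgroupoid of $\grpd X$ and $g$ is the canonical quotient $\grpd X\to\grpd X/\grpd A$. Since $\grpd A$ is totally disconnected, no pair of distinct objects of $\grpd X$ is identified in the quotient, so $g$ is injective on objects; it is also strictly surjective on objects and on arrows, being a $\tobj$-normal epi. It follows that $\grpd X$ must itself be totally disconnected, since any morphism $\phi\colon X\to Y$ with $X\neq Y$ in $\grpd X$ would map to a morphism between distinct objects of $\grpd B$, contradicting $\grpd B\in\vtord$.

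It remains to show that each vertex group $\grpd X(X,X)$ lies in $\vtor$. Combining the injectivity of $g$ on objects with its strict surjectivity on arrows, every endomorphism at $g(X)$ in $\grpd B$ lifts to an endomorphism at $X$ in $\grpd X$; together with the explicit description of the quotient, this identifies $\grpd B(g(X),g(X))$ with $\grpd X(X,X)/\grpd A(X,X)$. We therefore obtain a short exact sequence of groups
\[
1\longrightarrow\grpd A(X,X)\longrightarrow\grpd X(X,X)\longrightarrow\grpd B(g(X),g(X))\longrightarrow 1
\]
in $\vgrp$, whose outer terms lie in $\vtor$. Closure of $\vtor$ under extensions -- a pointed instance of \zcref{tor-charact} applied to $\vgrp$ -- then forces $\grpd X(X,X)\in\vtor$, whence $\grpd X\in\vtord$. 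The main technical hurdle will be the precise identification of the quotient groupoid $\grpd X/\grpd A$ and the extraction of the short exact sequence of vertex groups; this hinges on the total disconnectedness of $\grpd A$, which prevents any collapse of objects when passing from $\grpd X$ to $\grpd B$.
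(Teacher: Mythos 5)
Your proposal is correct and follows essentially the same route as the paper: both apply \zcref{tor-charact}, construct the coreflection vertex-wise via the $\vtor$-torsion subgroups of the endomorphism groups, and prove closure under $\tobj$-extensions by first showing the middle groupoid is totally disconnected and then reducing to a short exact sequence of vertex groups in $\vgrp$. The only (inessential) difference is that where you invoke the explicit description of the quotient $\grpd X/\grpd A$ to get injectivity of $g$ on objects, the paper derives total disconnectedness directly from the stated characterisation of $\tobj$-normal epimorphisms of groupoids together with the total disconnectedness of the kernel.
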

\begin{proof}
We use \zcref{tor-charact}. First, we prove that $\vtord$ is \zcor{} in $\vgrpd$. Let $T\colon\vgrp\to \vtor$ be a right adjoint to the inclusion $\vtor \inclusion \vgrp$, so that $TG$ is a normal subgroup of $G$ for all $G\in\vgrp$. For all $\grpd G\in \vgrpd$, we define the wide subgroupoid $\grpd T\grpd G$ of $\grpd G$, whose hom-sets are given by
\[\grpd T\grpd G(X,Y)=
\begin{cases*}
   \varnothing & if $X\neq Y$,
   \\
   T\bigl(\grpd G(X,X)\bigr) & if $X=Y$,
\end{cases*}
\]
for $X,Y\in\grpd G$. We have that $\grpd T\grpd G\inclusion \grpd G$ is a $\tobj$-normal-mono-coreflection of $\grpd G$ in $\vtord$. This directly follows from the fact that $T\bigl(\grpd G(X,X)\bigr)\inclusion\grpd G(X,X)$ is a normal-mono-coreflection of $\grpd G(X,X)$ in $\vtor$ for all $X\in\grpd G$.

Next, we verify that $\vtord$ is closed under \zext{}s. Consider a \npexact{} sequence
\[
\begin{tikzcd}
    \grpd N\rar[hookrightarrow] &\grpd G \rar{F} &\grpd H,
\end{tikzcd}
\]
where $\grpd N,\grpd H\in\vtord$. Let $g\colon X\to Y$ be a morphism in $\grpd G$. 
Since all morphisms in $\grpd H$ are endomorphisms, we have $FX=FY$. 
By the characterisation of \npnormepi{}s recalled above, there exist $u,v\in\Ker F$ such that $\mcomp{\id X,u}=\mcomp{v,\id Y}$. 
As all morphisms in $\Ker F=\grpd N$ are endomorphisms, it follows that $X=\dom u=\cod v =Y$, proving that all morphisms in $\grpd G$ are endomorphisms. 
We also deduce that for all objects $X\in\grpd G$ the sequence
\[
\begin{tikzcd}
    \grpd N(X,X)\rar[hookrightarrow] &\grpd G(X,X)\rar{F} &\grpd H(FX,FX)
\end{tikzcd}
\]
is exact in $\vgrp$. Since $\grpd N(X,X)$ and $\grpd H(FX,FX)$ lie in $\vtor$, which is a torsion subcategory, we conclude that $\grpd G(X,X)\in\vtor$, and hence $\grpd G\in\vtord$.
\end{proof}
\subsection*{Torsion theories for preordered commutative monoids}
\newcommand{\conn}{\sim}
We briefly recall that a preordered commutative monoid is a commutative monoid equipped with a preorder that is compatible with addition. Morphisms between such objects are monotone monoid homomorphisms. The category of preordered commutative monoids is \pnormal{} (see \cite{PN}). In this category, a normal monomorphism is, up to isomorphism, given by the inclusion of a full, normal preordered submonoid, that is, a full preordered submonoid $K\subseteq M$ such that for all $x,y\in M$, if $x\in K$ and $x+y\in K$, then $y\in M$. Normal epimorphisms are instead characterised as maps $f\colon M\to N$ such that:
\begin{enumerate}
    \item for all $x,x'\in M$, if $f(x)=f(x')$, then there exist $a,a'\in\ker f$ such that $x+a=x'+a'$;
    \item for all $y\le y'$ in $N$, there exist $x\le x'$ in M such that $f(x)=y$ and $f(x')=y'$.
\end{enumerate}
Given a preordered commutative monoid $M$, two elements $x,x'\in M$ are said to be \emph{connected}, written $x\conn x'$, if there exists a finite zigzag of inequalities connecting them, as in the following diagram,
\[
    x=x_0\le x_1\ge x_2\le\dots\ge x_n=x',
\]
with $x_0, x_1,\dots,x_n\in M$. The relation $\conn$ is a congruence on the commutative monoid $M$, i.e.\ an equivalence relation satisfying 
\[  
x\conn x'\implies x+y\conn x'+y
\]
for all $x,x',y\in M$.
A preordered commutative monoid is said to be \emph{connected} if any two of its elements are connected. Connected preordered commutative monoids are part of a torsion theory.
\begin{proposition}
    The category of connected preordered commutative monoids is a torsion subcategory of the category of preordered commutative monoids. The corresponding torsion-free subcategory consists of those preordered commutative monoids whose  connected component containing 0 is trivial.
\end{proposition}
\begin{proof}
We use \zcref{tor-charact}.
Given any preordered commutative monoid $M$, define $TM \subseteq M$ to be the full preordered submonoid consisting of all elements connected to 0. 
This is clearly connected and $TM\inclusion M$ is the coreflection of $M$ into the subcategory of connected preordered commutative monoids.

We now show that $TM$ is a normal submonoid of $M$. Indeed, suppose $x, x + y \in TM$. Then $x,x+y\conn 0$, and since $\conn$ is a congruence, it follows that
\[
y=0+y\conn x+y\conn 0,
\]
and so $y\in TM$.

Next, consider a short exact sequence of preordered commutative monoids.
\[
\begin{tikzcd}
    L\rar[hookrightarrow] &M\rar{f} &N,
\end{tikzcd}
\]
with $L$ and $N$ connected. Consider $x\in M$. Since $N$ is connected, we have $f(x)\conn 0$ in $N$. By the characterisation of normal epimorphisms seen above, the zigzag exhibiting $f(x)\conn 0$ is reflected in $M$, that is, there exist $x',a$ in $M$ such that $x'\conn a$, $f(x')=f(x)$ and $f(a)=0$. 
Again because $f$ is a normal epimorphism, we obtain $b,b'\in L$ such that 
\[
x+b=x'+b'.
\]
Since $L$ is connected, the elements $a,b,b'$ are all connected to $0$ in $L$. Using repeatedly this fact and the fact that $\conn$ is a monoid congruence we deduce:
\[
x=x+0\conn x+b=x'+b'\conn x'+0=x'\conn a\conn 0.
\]
This proves that all elements in $M$ are connected to 0, and hence $M$ is connected.
\end{proof}
\begin{remark}
It is easy to show that connected partially ordered commutative monoids also form a torsion subcategory of the category of partially ordered commutative monoids. This follows  from the above proposition and the description of normal epimorphisms from \cite{PN}.
\end{remark}
\section*{Acknowledgments}
\addcontentsline{toc}{section}{Acknowledgments}
This research was conducted while both authors were affiliated with INdAM -- Istituto Nazionale di Alta Matematica ‘Francesco Severi’, Gruppo Nazionale per le Strutture Algebriche, Geometriche e le loro Applicazioni (GNSAGA). Moreover, the second author was partially supported by the HUMATH project (FIS-2023-04053 -- CUP I53C24003170001). 
\phantomsection
\addcontentsline{toc}{section}{References}
\printbibliography

@book{GRANDIS13,
    author = {Grandis, M.},
    title = {Homological algebra in strongly non-abelian settings} ,
    publisher = {World Scientific} ,
    year = {2013},
    isbn = {978-981-4425-93-3},
    doi = {10.1142/8608}
}

@article{GRANDIS92,
author = {Grandis, M.},
journal = {Cah. Topol. Géom. Différ. Catég.},
number = {2},
publisher = {Dunod éditeur, publié avec le concours du CNRS},
title = {On the categorical foundations of homological and homotopical algebra},
volume = {33},
year = {1992},
pages={135-175},
url={https://www.numdam.org/item/CTGDC_1992__33_2_135_0/}
}

@article{MARKI13, 
title={Non-pointed exactness, radicals, closure operators}, 
volume={94}, 
number={3},
pages={348-361},
journal={J. Aust. Math. Soc.}, 
author={Grandis, M. and Janelidze, G. and Márki, L.}, 
year={2013},
doi ={10.1017/S1446788713000086}
}

@article{NORMAL,
title = {The pointed subobject functor, $3\times 3$ lemmas, and subtractivity of spans},
journal = {Theory Appl. Categ.},
year ={2010},
volume = {33},
number = {11},
pages = {221-242},
author ={Janelidze, Z.},
url ={http://www.tac.mta.ca/tac/volumes/23/11/23-11abs.html}
}

@book{BORCEUX94a, 
place={Cambridge},
series={Encyclopedia Math. Appl.}, title={Handbook of Categorical Algebra 1}, 
subtitle = {Basic Category Theory},
publisher={Cambridge Univ. Press}, 
author={Borceux, F.}, 
year={1994}, 
volume ={50},
series={Encyclopedia Math. Appl.},
isbn={978-0-511-52585-8},
doi={10.1017/CBO9780511525858}}

@book{BORCEUX94b, 
place={Cambridge}, 
series={Encyclopedia Math. Appl.}, 
title={Handbook of Categorical Algebra 2}, 
subtitle = {Categories and Structure},
publisher={Cambridge Univ. Press}, 
author={Borceux, F.}, 
year={1994}, 
collection={Encyclopedia Math. Appl.},
volume ={51},
isbn={978-0-511-52586-5},
doi={10.1017/CBO9780511525865}}

@article{MICHEL,
    author = {Gran, M. and Michel, A.},
    title = {Torsion theories and coverings of preordered groups},
    journal = {Algebra Univers.},
    year = {2021},
    doi ={10.1007/s00012-021-00709-6},
    volume ={82},
    eid ={22},
    pages ={1-30}
}

@article{THOLEN,
    author = {Clementino, M.~M. and Dikranjan, D. and Tholen, W.},
    title = {Torsion theories and radicals in normal categories},
    journal = {J.~Algebra},
    year = {2006},
    volume={305},
    number={1},
    pages={98--129},
    doi={10.1016/j.jalgebra.2005.09.030}
}

@article{PRETORSION,
title = {Pretorsion theories in general categories},
journal = {J. Pure Appl. Algebra},
volume = {225},
number = {2},
eid = {106503},
pages = {1-21},
year = {2021},
issn = {0022-4049},
doi = {10.1016/j.jpaa.2020.106503},
url = {https://www.sciencedirect.com/science/article/pii/S0022404920302048},
author = {Facchini, A. and Finocchiaro C. and Gran, M.},
}

@article{HTT,
title = {Homotopy torsion theories},
journal = {J. Pure Appl. Algebra},
volume = {228},
number = {12},
eid = {107742},
pages ={1-39},
year = {2024},
issn = {0022-4049},
doi = {10.1016/j.jpaa.2024.107742},
url = {https://www.sciencedirect.com/science/article/pii/S0022404924001397},
author = {Mantovani, S. and Messora, M. and Vitale, E.~M.}
}

@article{GRANDIS20,
    author = {Grandis, M. and Janelidze, G.},
    title = {From torsion theories to closure operators and factorization systems},
    journal = {Categ. Gen. Algebr. Struct. Appl. },
    year = {2020},
    volume={12},
    number ={1},
    pages={89–121},
    doi ={10.29252/cgasa.12.1.89}
}

@article{BOURN,
    author = {Bourn, D. and Gran, M.},
    title = {Torsion theories in homological categories} ,
    journal = {J. Algebra},
    year = {2006},
    volume={305},
    number={1},
    pages={18-47},
    doi={https://doi.org/10.1016/j.jalgebra.2006.07.011}
}

@book{DIKRANJAN10,
    author = {Dikranjan, D. and Tholen, W.},
    title = {Categorical Structure of Closure Operators},
    publisher ={Springer Dordrecht},
    series ={Math. Appl.},
    year = {2010},
    doi ={https://doi.org/10.1007/978-94-015-8400-5}
}

@article{EVERAERT,
    author = {Everaert, T. and Gran, M.},
    title = {Protoadditive functors, derived torsion theories and homology},
    journal = {J. Pure Appl. Algebra},
    year = {2015},
    pages={3629-3676},
    volume={219},
    number={8},
    doi={https://doi.org/10.1016/j.jpaa.2014.12.015}
}

@article{CASSIDY,
    author = {Cassidy, C. and  H\'ebert, M. and Kelly, G.~M.},
    title = {Reflective subcategories, localizations and factorization systems},
    journal = {J. Aust. Math. Soc.},
    series={A},
    volume={38},
    number={3},
    pages={287-329},
    year = {1985},
    doi={ https://doi.org/10.1017/S1446788700023624}
}

@inbook{FACCHINI21,
    author = {Facchini, A.},
    title = {Commutative Monoids, Noncommutative Rings and Modules},
    booktitle={New Perspectives in Algebra, Topology and Categories},
    editor = {Clementino, M.~M. and Facchini, A. and Gran, M.},
    year = {2021},
    publisher={Springer, Cham.},
    doi={https://doi.org/10.1007/978-3-030-84319-9_3},
    series = {Coimbra Mathematical Texts},
    number={1}
   }

@article{BOURN02,
    author = {Bourn, D.},
    title = {Aspherical abelian groupoids and their directions},
    journal = {J. Pure Appl. Algebra},
    year = {2002},
    volume ={168},
    number ={2-3},
    pages={133-146},
    doi={https://doi.org/10.1016/S0022-4049(01)00093-7}
}

@article{ROSICKY,
    author = {Rosick\'y, J. and Tholen, W.},
    title = {Factorization, fibration and torsion},
    journal = {J. Homotopy Relat. Struct.},
    year = {2007}, 
    url={http://eudml.org/doc/230964},
    pages={295-314},
    volume={2},
    number={2}
}

@article{DICKSON,
    author = {Dickson, S.~E.} ,
    title ={A torsion theory for Abelian categories} ,
    journal = {Trans. Amer. Math. Soc.},
    year = {1966},
    volume ={121},
    pages={223-235},
    doi={https://doi.org/10.1090/S0002-9947-1966-0191935-0}
}

@inbook{JANELIDZE07,
    author = {Janelidze, G. and Tholen, W.},
    title = {Characterization of torsion theories in general categories},
    publisher ={Amer. Math. Soc.} ,
    booktitle ={Categories in Algebra, Geometry and Mathematical Physics},
    series ={Contemp. Math},
    number={431},
    year = {2007},
    editor={Davydov, A. and Batanin, M. and Johnson, M. and Lack, S. and Neeman, A.},
    pages={249–256},
    url={https://mathscinet.ams.org/mathscinet/relay-station?mr=2342832}
    }

@misc{ANDREAS,
    title={Torsion Theories in a Non-pointed Context},
    author={Cappelletti, A. and Montoli, A.},
    year={2025},
    eprint={2506.13349},
    archivePrefix={arXiv},
    primaryClass={math.CT},
    doi={https://doi.org/10.48550/arXiv.2506.13349}
}

@article{Messora,
    author = {Messora, M.},
    title = {A 2-dimensional torsion theory on symmetric monoidal categories},
    journal = {J. Algebra},
    year = {2025},
    doi={https://doi.org/10.1016/j.jalgebra.2025.04.014},
    volume = {667},
    pages={372-393}
}

@article{PN,
    author = {Mantovani, S. and Messora, M.},
    title = {Prenormal categories},
    journal ={Appl. Categ. Structures },
    volume={34},
    number ={5},
    year = {2026},
    pages ={1-35},
    doi={https://doi.org/10.1007/s10485-025-09835-x}
}
\end{document}